\documentclass[11pt, oneside]{article}

\topmargin-.5in
\textheight9in
\oddsidemargin0in
\textwidth6.5in

\usepackage{epsfig}
\usepackage{tikz}
\usepackage{url}
\usetikzlibrary{fadings,shapes,arrows.meta,positioning}

\usepackage{hhline}
\usepackage[title]{appendix}

\usepackage{amsmath,amstext,amssymb,amsfonts,amsthm}
\usepackage{mathrsfs}
\usepackage{xcolor}
\usepackage{mathtools}
\usepackage{graphicx}
% tables
\usepackage{booktabs}
\usepackage{multicol}
\usepackage{multirow}
\usepackage[labelfont=bf]{caption}
\usepackage{subcaption}
\usepackage{siunitx}
% references
%\usepackage{hyperref}

\newtheorem{lem}{Lemma}[section]
\newtheorem{thm}{Theorem}[section]

\newtheorem{rem}{Remark}[section]

\newtheorem{exmp}{Example}

\numberwithin{equation}{section}
\numberwithin{figure}{section}

\newcommand{\jl}{[\![}
\newcommand{\jr}{]\!]}
\newcommand{\la}{\langle}
\newcommand{\ra}{\rangle}
\newcommand{\jpl}{|\![}
\newcommand{\jpr}{]\!|}

\tikzfading[name=fade right, left color=transparent!0, right color=transparent!100]
\tikzfading[name=fade left, right color=transparent!0, left color=transparent!100]
\tikzfading[name=fade down, bottom color=transparent!0, top color=transparent!100]
\tikzfading[name=fade up, top color=transparent!0, bottom color=transparent!100]
\allowdisplaybreaks[4]

\title{An Interior Penalty Discontinuous Galerkin Method for an Interface Model of Flow in Fractured Porous Media\footnotemark[1]}
\author{
Yong Liu\footnotemark[2]
\and
Ziyao Xu\footnotemark[3]
}

\date{}

\begin{document}

\maketitle
\renewcommand{\thefootnote}{\fnsymbol{footnote}}
\footnotetext[1]{The first author is funded in part by Strategic Priority Research Program of the Chinese Academy of Sciences under the Grant No. XDB0640000, China Natural Science Foundation under grant 12201621, 12288201 and the Youth Innovation Promotion Association (CAS).}
\footnotetext[2]{LSEC, Institute of Computational Mathematics, Academy of Mathematics and Systems Science, Chinese Academy of Sciences, Beijing 100190, P.R. China.  E-mail: yongliu@lsec.cc.ac.cn}
\footnotetext[3]{Department of Applied and Computational Mathematics and Statistics,
University of Notre Dame, Notre Dame, IN 46556, USA. E-mail: zxu25@nd.edu}

\begin{center}
\small
\begin{minipage}{0.9\textwidth}
\textbf{Abstract.}
Discrete fracture models with reduced-dimensional treatment of conductive and blocking fractures are widely used to simulate fluid flow in fractured porous media. Among these, numerical methods based on interface models are intensively studied, where the fractures are treated as co-dimension one manifolds in a bulk matrix with low-dimensional governing equations. In this paper, we propose a simple yet effective treatment for modeling the fractures on fitted grids in the interior penalty discontinuous Galerkin (IPDG) methods without introducing any additional degrees of freedom or equations on the interfaces. 
We conduct stability and {\em hp}-analysis for the proposed IPDG method, deriving optimal a priori error bounds concerning mesh size ($h$) and sub-optimal bounds for polynomial degree ($k$) in both the energy norm and the $L^2$ norm. Numerical experiments involving published benchmarks validate our theoretical analysis and demonstrate the method's robust performance. Furthermore, we extend our method to two-phase flows and use numerical tests to confirm the algorithm's validity.

\medskip
\textbf{Key words.} discrete fracture model, interior penalty discontinuous Galerkin (IPDG), interface problem, stability analysis, error estimates, two-phase flow

\medskip

\textbf{AMS classification}.
65N12, 65N30, 65M60
\end{minipage}
\end{center}
\setlength{\parindent}{2em}

\pagenumbering{arabic}

\section{Introduction}\label{Sect:intro}

As a result of geological processes, fractures are common in rocks. 
Depending on the materials filling them, fractures can act either as preferred conductive paths or as blocking barriers for subsurface flows. 
Their permeability may be higher or lower than that of the surrounding bulk matrix by several orders of magnitude. 
Because of their significant impact on the hydraulic properties of rocks, accurate and efficient simulation of fluid flow in fractured porous media is desired in many applications, 
such as oil recovery in naturally fractured reservoirs, geothermal extraction, and hydraulic fracturing.

Due to the small scale of fractures, directly simulating fractured rocks as a heterogeneous medium with extremely refined grids in fracture regions is often unaffordable for computational resources.
To reduce the computational cost, the discrete fracture models (DFMs) that use hybrid-dimensional approaches to treat fractures as low-dimensional geometries have been actively studied over the past decades.
A common approach to account for conductive fractures in DFMs is to superpose the fracture flows onto the flow in the porous matrix, using the aperture of the fracture multiplied by the fracture flow as the dimensional homogeneity factor.
Examples of this approach include the finite element (FE) \cite{kim2000finite, karimi2001numerical,zhang2013accurate} and the vertex-centered finite volume (Box) \cite{monteagudo2004control, reichenberger2006mixed, monteagudo2007comparison} discrete fracture models on fitted meshes. 
In these models, fracture flows are integrated over the matrix cells, eliminating the need to compute flux transfer between the matrix and fractures \cite{hoteit2006compositionalb}.
Alternatively, fractures can be assigned additional degrees of freedom and discretized with individual equations with flux exchange to the surrounding bulk matrix.
For instance, the cell-centered finite volume (CCFV) discrete fracture models based on the two-point flux approximation (TPFA) \cite{karimi2004efficient} and the multi-point flux approximation (MPFA) \cite{sandve2012efficient, ahmed2015control,glaser2017discrete}, the mortar-mixed finite element method \cite{boon2018robust}, as well as the extended Box discrete fracture model (EBox-DFM) \cite{glaser2022comparison}. 
To relieve the constraints on body-fitted grids, various non-fitted DFMs have been proposed, such as the embedded discrete fracture model (EDFM) \cite{li2008efficient, moinfar2014development}, which computes the flux exchange between matrix cells and fracture cells based on their average distance, and its extension, pEDFM \cite{ctene2017projection, jiang2017improved} to include blocking barriers.
Also available are the Lagrange multiplier methods \cite{koppel2019lagrange, schadle20193d} for coupling the matrix and fracture equations via the Lagrange multiplier, 
the extended finite element method (XFEM) \cite{d2012mixed, flemisch2016review, schwenck2015dimensionally} and immersed finite element method (iFEM) \cite{zhao2024discrete} to embed the information of fractures into the local function space of matrix elements.
Recently, a reinterpreted discrete fracture model (RDFM) \cite{xu2020hybrid,xu2023hybrid,fu2023hybridizable} was proposed to describe fractures as Dirac-$\delta$ coefficients in the total permeability tensor of porous media.
A general interface model accounting for arbitrary types of fractures was proposed in \cite{martin2005modeling} with the mixed finite element discretization.
This model reduces fractures to interfaces governed by a low-dimensional version of Darcy’s law and establishes appropriate coupling conditions with the bulk matrix. 
The interface models have been the basis for the development of many discrete fracture models, based on finite volume \cite{angot2009asymptotic}, mimetic finite difference \cite{antonietti2016mimetic}, discontinuous Galerkin \cite{antonietti2019discontinuous, antonietti2020ME,ma2021discontinuous}, weak Galerkin \cite{wang2018weak}, hybrid high-order \cite{chave2018hybrid}, and mixed finite element methods \cite{arraras2019mixed}, among others.

The Discontinuous Galerkin (DG) method is widely adopted in computing fluid flow in porous media due to its advantages in high-order accuracy, flexibility for complex computational domains, and ease of $hp$ adaptation and parallelization. 
Various types of DG methods can be applied to porous media flow \cite{rivie2000part,hoteit2006compositionala,chuenjarern2019high,tao2023oscillation} and interface problems \cite{huynh2013high, cangiani2018adaptive, huang2020high}.
In this work, we propose an Interior Penalty Discontinuous Galerkin (IPDG) \cite{riviere2002discontinuous,riviere2008discontinuous} discrete fracture model discretizing an interface problem on fitted meshes, without introducing additional degrees of freedom and equations on interfaces, thereby reducing computational costs.
The degrees of freedom and the sparsity of the stiffness matrix in the proposed DG scheme are identical to those in the original IPDG method, which is a main advantage of our method compared to existing work based on DG methods. 
The proposed scheme can be implemented in enriched Galerkin \cite{kadeethum2020flow} spaces to further reduce the computational cost.
Numerical analyses have been conducted on DG methods for various scenarios: single-phase flow in fractured porous media \cite{antonietti2019discontinuous, antonietti2020ME,mozolevski2021high}, two-phase flow within a single domain \cite{Epshteyn2009JCAM,Kou2013SINUM}, and a hybrid-dimensional fracture model for two-phase flow \cite{chen2023discontinuous}. 
Additionally, Arnold et al. \cite{Arnold2002SINUM} offer a unified analysis of DG methods applied to elliptic problems. 
This paper presents an {\em hp}-analysis of IPDG methods for single-phase flow in both conductive and blocking fractures. 
Through the employment of {\em hp} inverse estimates, interpolation techniques, and duality arguments, we derive both optimal a priori error bounds in terms of mesh size $h$ and sub-optimal bounds regarding polynomial degree $k$, assessed in both the energy norm and $L^2$ norm.
The extension to two-phase flows is also established and tested in the paper.

The remainder of the paper is organized as follows: In Section \ref{Sect:model}, we describe the interface model of fluid flow in fractured porous media involving both conductive and blocking fractures. In Section \ref{Sect:scheme}, we establish an IPDG scheme for the model. The stability analysis and error estimates of the proposed scheme are conducted in Section \ref{Sect:theory}. 
Numerical tests involving published benchmarks are presented in Section \ref{Sect:tests} to validate the theoretical analysis and demonstrate the effectiveness of the method. 
We extend our approach to two-phase flows and use a numerical test to demonstrate the validity of the algorithm in Section \ref{Sect:two-phase}. Finally, we conclude with remarks in Section \ref{Sect:summary}.

\section{An interface model of flow in fractured porous media}\label{Sect:model}
Generally, the fluid flow in heterogeneous porous media is governed by the Poisson equation
\begin{equation*}
-\nabla\cdot(\mathbf{K}\nabla p)=q, \quad (x,y)\in\Omega,
\end{equation*}
where $\mathbf{K}$ is the permeability tensor of the porous media, $p$ is the pressure of the fluid and $q$ is the source term.
In the presence of fractures, $\mathbf{K}(x,y)$ has fine structures in the domain and extremely refined grids in fracture regions are desired to resolve the small scale of fractures.
Such an approach is referred to as the equi-dimensional method.
To avoid grid refinement, hybrid-dimensional approaches have been developed.

\begin{figure}[!htbp]
  \centering
  \begin{tikzpicture}[scale=1.0]
    % Outer rectangle
    \draw[thick] (-4,-2) -- (4,-2) -- (4,2) -- (-4,2) -- cycle;
    
    % Interfaces
    \draw[thick] (-2,-2) -- (-1,2); % Interface γ_1
    \draw[thick] (2,-2) -- (1,2); % Interface γ_2
    
    % Normal vectors
    \draw[-Latex] (-1.25,1) -- ++(-1,0.25) node[above right] {$\mathbf{n}_1^+$};
    \draw[-Latex] (-1.75,-1) -- ++(1,-0.25) node[below left] {$\mathbf{n}_1^-$};
    \draw[-Latex] (-1.8,-0.4) -- ++(0.25,1.0) node[below left] {$\boldsymbol{\nu}_1$};    
    \draw[-Latex] (1.25,1) -- ++(-1,-0.25) node[above left] {$\mathbf{n}_2^+$};
    \draw[-Latex] (1.8,-0.4) -- ++(-0.25,1) node[below right] {$\boldsymbol{\nu}_2$};
    \draw[-Latex] (1.75,-1) -- ++(1,0.25) node[below right] {$\mathbf{n}_2^-$};
    
    % Regions labels
    \node at (-3,0) {$\Omega_1$};
    \node at (0,0) {$\Omega_2$};
    \node at (3,0) {$\Omega_3$};
    
    % Interface labels
    \node at (-1.4,0) {$\gamma_1$};
    \node at (1.4,0) {$\gamma_2$};
  \end{tikzpicture}
  \caption{The geometry of an interface model involving fractures.}
  \label{fig:hybridDomain}
\end{figure}
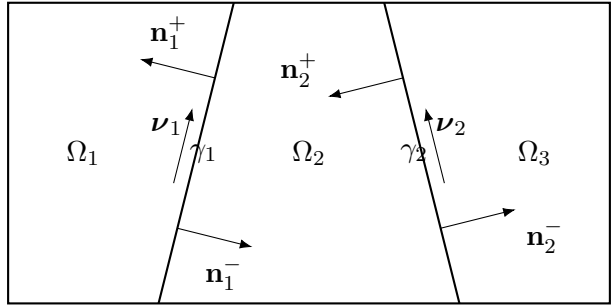

In this paper, we consider the following interface problem for modeling the fluid flow in fractured media with both conductive and blocking types of fractures,
\begin{equation}\label{eq:PDEmodel}
\begin{cases}
-\nabla\cdot(\mathbf{K}_m\nabla p)=q, &(x,y)\in\Omega\setminus\left(\gamma_1\cup \gamma_2\right)\\
\mathbf{u}^{-}\cdot\mathbf{n}_1^-+\mathbf{u}^{+}\cdot\mathbf{n}_1^+=\mathbf{0}, & (x,y)\in\gamma_1\\
\mathbf{u}^{-}\cdot\mathbf{n}_1^-=-k_b\frac{p^+-p^-}{a}, & (x,y)\in\gamma_1\\
p^+-p^-=0, & (x,y) \in \gamma_2\\
-\frac{\partial }{\partial \boldsymbol{\nu}_2}\left( ak_f\frac{\partial p^+}{\partial \boldsymbol{\nu}_2}\right) = { q_f+}
\mathbf{u}^{-}\cdot\mathbf{n}_2^-+\mathbf{u}^{+}\cdot\mathbf{n}_2^+, & (x,y) \in \gamma_2
\end{cases}
\end{equation}
subject to the boundary conditions
\begin{equation}\label{eq:BdrCond}
p=g_D ~\text{on}~ \Gamma_D, \quad (\mathbf{K}_m\nabla p)\cdot\mathbf{n}= g_N ~\text{on}~\Gamma_N:=\partial\Omega\setminus\Gamma_D,
\end{equation}
where $\mathbf{K}_m$ is the permeability tensor of the bulk matrix, $p$ is the pressure, $q$ { and $q_f$ are source terms}, $\Omega$ is the domain of interest, $\gamma_1$ is the blocking barrier interface, $\gamma_2$ is the conductive fracture interface, $\mathbf{u}^{\pm}:=-\mathbf{K}_m^{\pm}\nabla p^{\pm}$ are Darcy velocities evaluated from either side of $\gamma_i,i=1,2$,
$\mathbf{n}_i^{\pm}$ are unit outer normal vectors on $\gamma_i$ from either side, $\boldsymbol{\nu}_i$ are unit tangential vectors on $\gamma_i$, $k_b (\ll 1)$ is the permeability of the blocking barrier, $k_f (\gg1)$ is the permeability of the conductive fracture, $a (\ll 1)$ are the apertures of the barrier and fracture, $\Gamma_D$ and $\Gamma_{N}$ are the Dirichlet and Neumann boundaries, respectively, and $\mathbf{n}$ is the unit outer normal on the boundary. Here we assume that $\gamma_1$ and $\gamma_2$ are disjoint and divide the domain $\Omega$ into three subdomains $\Omega_i,i=1,2,3$, see Figure \ref{fig:hybridDomain}. We also denote $\Omega_{\gamma_1}^{-}=\Omega_1$, $\Omega_{\gamma_1}^{+}=\Omega_2$, $\Omega_{\gamma_2}^{-}=\Omega_2$, and $\Omega_{\gamma_2}^{+}=\Omega_3$.

The interface model considered here, which neglects tangential flow in blocking barriers and pressure discontinuities across conductive fractures, can be regarded as the limit case of the general interface model established in \cite{martin2005modeling}.
We refer the readers to the discussions about equations (4.5) and (4.6) therein. 
It is important to note that the model in this paper is not suitable to fractures with high tangential permeability but low normal permeability, as considered in \cite{martin2005modeling}, a situation that occurs less frequently in reality.

\section{The interior penalty discontinuous Galerkin methods}\label{Sect:scheme}
In this section, we establish the interior penalty discontinuous Galerkin methods for the interface problem \eqref{eq:PDEmodel} - \eqref{eq:BdrCond}.

We assume that the computational domain $\Omega$ is partitioned into a triangular fitted mesh, denoted as $\mathcal{T}_h=\{T\}$, where $T$ are triangular cells and the barriers and fractures $\gamma_i,i=1,2,$ are aligned with the sides of $T$ (refer to the Figure \ref{fig:jump_of_vertices}).
As is customary, we assume the triangles $T$ to be shape-regular. Let $h_T$ stand for the diameter of $T$ and $h=\max_{T\in \mathcal{T}_h} h_T$.
We denote collections of different types of edges by
$\mathcal{E}^D=\{e:e=\partial T \cap \Gamma_D \,, \forall T \in \mathcal{T}_h\}$, 
$\mathcal{E}^N=\{e:e=\partial T \cap \Gamma_N \,, \forall T \in \mathcal{T}_h\}$, 
$\mathcal{E}=\{e:e=\partial T \cap \partial T' \,, \forall T, T' \in \mathcal{T}_h\}$,
$\mathcal{E}^{\gamma_i}=\{e:e=\partial T \cap \gamma_i \,, \forall T \in \mathcal{T}_h\}, i=1,2$, 
and $\mathcal{E}^{\circ}=\mathcal{E}\setminus (\mathcal{E}^{\gamma_1}\cup \mathcal{E}^{\gamma_2})$. 
Let $\mathcal{V}=\{(x^\star,y^\star)\in\gamma: (x^\star,y^\star) \text{ is one of vertices of }T\,, \forall T \in \mathcal{T}_h\}$ be all of vertices on the fracture $\gamma_2$.
We denote collections of different types of vertices by $\mathcal{V}^{\circ}\cup\mathcal{V}^D\cup \mathcal{V}^N=\mathcal{V}$, where $\mathcal{V}^{\circ}=\{(x^\star,y^\star)\in \mathcal{V}: (x^\star,y^\star)\in \Omega^{\circ} \}$, $\mathcal{V}^{D}=\{(x^\star,y^\star)\in \mathcal{V}: (x^\star,y^\star)\in \bar{\Gamma}_D \}$, and $\mathcal{V}^{N}=\{(x^\star,y^\star)\in \mathcal{V}: (x^\star,y^\star)\in \bar{\Gamma}_N \}$. 
Furthermore, we denote $\mathcal{T}_h^{\gamma_2}=\mathcal{T}_h^{\gamma_2,-}\cup \mathcal{T}_h^{\gamma_2,+}$, where $\mathcal{T}_h^{\gamma_2,\pm}:=\{T\in \mathcal{T}_h\cap \Omega_{\gamma_2}^{\pm}: \text{ there exists } e\in \mathcal{E}^{\gamma_2} \text{ s.t. } e \in \partial T\}$.

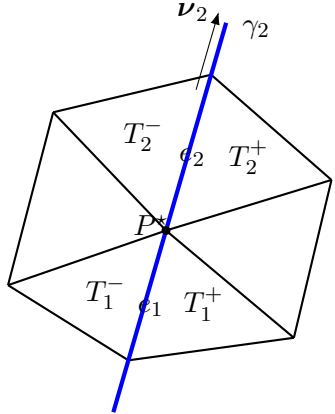
\begin{figure}[!htbp]
 \centering
  \begin{tikzpicture}[scale=1.0]
    % edges from origion to vertices
    \draw[thick] (0,-0.073) -- (0.6,2);
    \draw[thick] (0,-0.073) -- (2.2,0.6);
    \draw[thick] (0,-0.073) -- (1.7,-1.5);
    \draw[thick] (0,-0.073) -- (-0.5,-1.8);
    \draw[thick] (0,-0.073) -- (-2.1,-0.8);
    \draw[thick] (0,-0.073) -- (-1.5,1.5);
    % edges surrounding origion
    \draw[thick] (0.6,2) -- (2.2,0.6);
    \draw[thick] (2.2,0.6) -- (1.7,-1.5);
    \draw[thick] (1.7,-1.5) -- (-0.5,-1.8);
    \draw[thick] (-0.5,-1.8) -- (-2.1,-0.8);
    \draw[thick] (-2.1,-0.8) -- (-1.5,1.5);
    \draw[thick] (-1.5,1.5) -- (0.6,2);
    % outer edges
    % point 2
    \draw[thick, path fading=fade down] (0.6,2) -- (-0.1,2.7);
    \draw[thick, path fading=fade down] (0.6,2) -- (0.8, 2.6909);
    \draw[thick, path fading=fade down] (0.6,2) -- (1.5,2.2);
    % point 3
    \draw[thick, path fading=fade right] (2.2,0.6) -- (2.6,1.5);
    \draw[thick, path fading=fade right] (2.2,0.6) -- (3.3,-0.2);
    % point 4
    \draw[thick, path fading=fade right] (1.7,-1.5) -- (3.0,-1.0);
    \draw[thick, path fading=fade right] (1.7,-1.5) -- (2.9,-2.5);
    \draw[thick, path fading=fade up] (1.7,-1.5) -- (1.3,-2.6);
    % point 5
    \draw[thick, path fading=fade up] (-0.5,-1.8) -- (0.2,-2.7);
    \draw[thick, path fading=fade up] (-0.5,-1.8) -- (-1.8,-2.5);
    \draw[thick, path fading=fade up] (-0.5,-1.8) -- (-0.7,-2.49); %%% need to add an edge    
    % point 6
    \draw[thick, path fading=fade up] (-2.1,-0.8) -- (-2.2,-2.0);
    \draw[thick, path fading=fade up] (-2.1,-0.8) -- (-3.2,-1.0);
    \draw[thick, path fading=fade down] (-2.1,-0.8) -- (-3.3,0.5);
    % point 7
    \draw[thick, path fading=fade left] (-1.5,1.5) -- (-2.8,1.0);
    \draw[thick, path fading=fade left] (-1.5,1.5) -- (-2.8,2.0);
    \draw[thick, path fading=fade down] (-1.5,1.5) -- (-1.0,2.4);            
    % fracture
    \draw[ultra thick, blue] (-0.7,-2.49) -- (0.8, 2.6909);
    % labels
    \node at (1.2,2.6) {$\gamma_2$};
    \node at (-0.2,-1.1) {$e_1$};
    \node at (0.35,0.9) {$e_2$};
    \node at (-0.3,1.2) {$T_{2}^{-}$};
    \node at (1.1,0.9) {$T_{2}^{+}$};
    \node at (-0.8,-0.9) {$T_{1}^{-}$};
    \node at (0.5,-1.05) {$T_{1}^{+}$};
    \node at (-0.2,0.0) {$P^{\star}$};
    \node at (0,-0.073) [circle,fill,inner sep=1.2pt] {};
    \draw[-Latex] (0.4,1.8) -- ++(0.3,1.036) node[left] {$\boldsymbol{\nu}_2$};
  \end{tikzpicture}  
  \caption{Local area of a body-fitted mesh near the fracture $\gamma_2$.}\label{fig:jump_of_vertices}
 \end{figure}
 
The DG function space is defined as
\begin{equation*}
V_{h,k}^{DG}(\Omega):=\{v\in L^2(\Omega): v|_{T} \in P^k(T), \quad \forall T  \in \mathcal{T}_h\},
\end{equation*}
where $P^k(T)$ is the space of polynomials of degree no greater than $k$ on $T$.

For $v\in V_{h,k}^{DG}$, we define the average $\{v\}$ and the jump $\jl v \jr$ of $v$ on edges of elements as follows. 
Let $e\in \mathcal{E}$ be an interior edge shared by elements $T_1$ and $T_2$. Define the unit normal vectors $\mathbf{n}_1$ and $\mathbf{n}_2$ on $e$ pointing exterior of $T_1$ and $T_2$, respectively. We set
\begin{align*}
    \{v\}=\frac 12 (v_1+v_2), \quad \jl v\jr=v_1\mathbf{n}_1+v_2\mathbf{n}_2 \quad \text{ on } e \in \mathcal{E},
\end{align*}
where $v_i:=v|_{\partial T_i}$. For $e\in \mathcal{E}^D\cup \mathcal{E}^N$, we set
\begin{align*}
\{v\}=v,\quad \jl v \jr = v\mathbf{n} \quad \text{ on } e \in \mathcal{E}^D\cup \mathcal{E}^N.
\end{align*}
Likewise, we define the average $\{\mathbf{v}\}$ and the jump $\jl \mathbf{v} \jr$ of a vector-valued function $\mathbf{v}\in [V_{h,k}^{DG}]^2$ on edges as 
\begin{align*}
    \{\mathbf{v}\}=\frac 12 (\mathbf{v}_1+\mathbf{v}_2), \quad \jl \mathbf{v}\jr=\mathbf{v}_1\cdot\mathbf{n}_1+\mathbf{v}_2\cdot\mathbf{n}_2 \quad \text{ on } e \in \mathcal{E},
\end{align*}
and 
\begin{align*}
\{\mathbf{v}\}=\mathbf{v},\quad \jl \mathbf{v} \jr = \mathbf{v}\cdot\mathbf{n} \quad \text{ on } e \in \mathcal{E}^D\cup \mathcal{E}^N.
\end{align*}
We also define the average $\{v\}_{P^{\star}_{\pm}}$ and jump $\jl v\jr_{P^{\star}_{\pm}}$ of $v$ at vertices $P^{\star}$ on $\gamma_2$ (refer to Figure \ref{fig:jump_of_vertices}), for discretizing the interface condition of the conductive fractures. 
Assume that $P^\star\in\mathcal{V}^{\circ}$ is an interior vertex shared by four elements $T_1^{\pm}$ and $T_2^\pm$, where $T_1^{\pm}$ share a common edge $e_1$, and $T_2^{\pm}$ share a common edge $e_2$. 
The unit tangential vector $\boldsymbol{\nu}_2$ points from $e_1$ to $e_2$.
 We set
\begin{align*}
    \{v\}_{P^{\star}_{\pm}}=\frac{1}{2}(v|_{ T_1^{\pm}}+v|_{T_2^{\pm}}), \quad \jl v\jr_{P^{\star}_{\pm}}=v|_{T_1^{\pm}}-v|_{T_2^{\pm}} \quad \text{ at } P^\star\in\mathcal{V}^{\circ}.
\end{align*}
For $P^\star\in \mathcal{V}^D$, we set
\begin{align*}
    \{v\}_{P^{\star}_{\pm}}=v|_{T^{\pm}}, \quad \jl v\jr_{P^{\star}_{\pm}}=\text{sign}(\boldsymbol{\nu}_2\cdot \mathbf{n})v|_{T^{\pm}} \quad \text{ at } P^\star\in \mathcal{V}^D,
\end{align*}
where $T^{\pm}\in \mathcal{T}_h^{\gamma_2,\pm}$, which are such that $P^{\star} \in \partial T^{-} \cap \partial T^{+}$, and $T^{\pm}$ share a common edge on $\gamma_2$. 
The $\text{sign}(\cdot)$ is the sign function, i.e., $\text{sign}(x)=1$ if $x\geq 0$ and $\text{sign}(x)=-1$ if $x<0$. 

%{\color{red}Need to explain how you obtain this scheme, i.e. show consistency. Pressure is only discontinuous along the barrier. Need to address this issue. I don't think it is tough to construct the IPDG methods for conducting fractures following the $\delta$ function. The norm needs to have a special coefficient with h in the denominator in the cells containing conducting fractures. Only need to have a large penalty parameter to show the scheme is coercive. If with both conducting and blocking fractures, it is a very high-quality paper. For conducting fractures, it may not be necessary to use fitted meshes. For error estimates, provide two theorems. One is for fitted mesh. The other is for unfitting mesh for conducting fractures. With continuity of the pressure, one can get at least first-order convergence under $L^1$ norm. Can submit to M2AN}

The IPDG scheme for the interface model \eqref{eq:PDEmodel} - \eqref{eq:BdrCond} is to find $p_h\in V_{h,k}^{DG}$, such that
\begin{equation}\label{eq:scheme}
%a_{0}(p,\xi)+a_{D}(p,\xi)=F(\xi),\quad \forall\xi\in V_{h,k}^{DG},
a_h(p_h, \xi)+b_h(p_h,\xi)=F_h(\xi ) +G_h(\xi) \quad \forall\xi\in V_{h,k}^{DG},
\end{equation}
where
\begin{equation}\label{eq:a}
\begin{split}
    a_h(p_h,\xi)=&(\mathbf{K}_m\nabla p_h,\nabla \xi)_{\mathcal{T}_h}-\la \{\mathbf{K}_m\nabla p_h\},\jl\xi\jr \ra_{\mathcal{E}^{0}\cup \mathcal{E}^D\cup \mathcal{E}^{\gamma_2}}+\sigma \la 
    \jl p_h\jr , \{\mathbf{K}_m\nabla\xi\} \ra_{\mathcal{E}^0\cup \mathcal{E}^D\cup \mathcal{E}^{\gamma_2}}\\
    &+\la \alpha \jl p_h\jr, \jl \xi\jr \ra_{\mathcal{E}^0\cup \mathcal{E}^D\cup \mathcal{E}^{\gamma_2}}+\la\frac{k_b}{a}\jl p_h\jr,\jl\xi\jr \ra_{\mathcal{E}^{\gamma_1}},
\end{split}
\end{equation}
\begin{equation}\label{eq:b}
\begin{split}
    b_h(p_h,\xi)=&\frac 12  \la ak_f \frac{\partial p_h^-}{\partial \boldsymbol{\nu}_2}, \frac{\partial \xi^-}{\partial \boldsymbol{\nu}_2}\ra_{\mathcal{E}^{\gamma_2}}+\frac 12 \la  ak_f \frac{\partial p_h^+}{\partial \boldsymbol{\nu}_2}, \frac{\partial \xi^+}{\partial \boldsymbol{\nu}_2}\ra_{\mathcal{E}^{\gamma_2}}\\
    &-\frac 12[ak_f\{\frac{\partial p_h}{\partial \boldsymbol{\nu}_2}\}_{P_{-}^{\star}}, \jl \xi \jr_{P_{-}^\star}]_{\mathcal{V}^\circ \cup \mathcal{V}^D} -\frac 12[ak_f\{\frac{\partial p_h}{\partial \boldsymbol{\nu}_2}\}_{P_{+}^{\star}}, \jl \xi \jr_{P_{+}^\star}]_{\mathcal{V}^\circ \cup \mathcal{V}^D} \\
    &+\frac \sigma 2[ak_f\{\frac{\partial \xi }{\partial \boldsymbol{\nu}_2}\}_{P_{-}^{\star}}, \jl p_h \jr_{P_{-}^\star}]_{\mathcal{V}^\circ \cup \mathcal{V}^D} +\frac \sigma 2[ak_f\{\frac{\partial \xi}{\partial \boldsymbol{\nu}_2}\}_{P_{+}^{\star}}, \jl p_h \jr_{P_{+}^\star}]_{\mathcal{V}^\circ \cup \mathcal{V}^D} \\
    &+ [\tilde{\alpha}\jl p_h \jr_{P_{-}^\star},\jl \xi \jr_{P_{-}^\star}]_{\mathcal{V}^\circ \cup \mathcal{V}^D} +[\tilde{\alpha}\jl p_h \jr_{P_{+}^\star},\jl \xi \jr_{P_{+}^\star}]_{\mathcal{V}^\circ \cup \mathcal{V}^D},
\end{split}
\end{equation}
and
\begin{equation}\label{eq:F}
F_h(\xi)=(q,\xi)_{\mathcal{T}_h}+\la g_N, \xi \ra_{\mathcal{E}^N} +\sigma \la g_D, \mathbf{K}_m\nabla\xi\cdot\mathbf{n}\ra_{\mathcal{E}^D} +\la \alpha g_D,\xi \ra_{\mathcal{E}^D},
\end{equation}
\begin{equation}\label{eq:G}
G_h(\xi)={ \la q_f, \{\xi\} \ra_{\mathcal{E}^{\gamma_2}}}+\frac{\sigma}{2}[ak_f(\{\frac{\partial \xi}{\partial \boldsymbol{\nu}_2}\}_{P_{-}^\star}+\{\frac{\partial \xi}{\partial \boldsymbol{\nu}_2}\}_{P_{+}^\star}),\text{sign}(\boldsymbol{\nu}_2\cdot \mathbf{n})g_D]_{\mathcal{V}^D}+[\tilde{\alpha}(\xi_{P_{-}^\star}+\xi_{P_{+}^\star}),g_D]_{\mathcal{V}^D},
\end{equation}
%\begin{equation}\label{eq:F}
%F(\xi)=(q,\xi)+\sum_{e\in\mathcal{E}^{N}}\int_{e}g_N\,\xi\,ds+\sigma\sum_{e\in\mathcal{E}^D}\int_{e}\mathbf{K}_m\nabla\xi\cdot\mathbf{n}\,g_D\,ds+\sum_{e\in\mathcal{E}^D}\frac{\alpha_D}{|e|^{\beta}}\int_{e}g_D\,\xi\,ds,
%\end{equation}
where $\sigma=-1,1,0$ corresponds to the symmetric interior penalty Galerkin (SIPG), nonsymmetric interior penalty Galerkin (NIPG), and incomplete interior penalty Galerkin (IIPG) methods, respectively.
The $\alpha\in L^{\infty}(\mathcal{E}^0\cup \mathcal{E}^D\cup \mathcal{E}^{\gamma_2})$ is a penalty function that is defined by
\begin{align*}
    \alpha|_e=\alpha_0 k^2 h_e^{-1} \quad \forall e\in \mathcal{E}^0\cup \mathcal{E}^D \cup \mathcal{E}^{\gamma_2}
\end{align*}
where the mesh function $h_e\in L^{\infty}(\mathcal{E}^0\cup\mathcal{E}^D\cup \mathcal{E}^{\gamma_2})$ is defined by $h_e|_{e}=|e|$,  the $|e|$ denotes the length of $e$ and $\alpha_0$ is a positive number. The $\tilde{\alpha} \in L^\infty(\mathcal{E}^{\gamma_2})$ is also a penalty function that is defined by 
\begin{align*}
\tilde{\alpha}|_{P^\star}=
\begin{cases}
\tilde{\alpha}_0k^2h_\star^{-1} \quad \forall P^\star \in \mathcal{V}^\circ\\
\tilde{\alpha}_0k^2h_e^{-1} \quad \forall P^\star \in \mathcal{V}^D\\
\end{cases}
\end{align*}
where $h_\star=\min(|e_1|,|e_2|)$, the edges $e_1$ and $e_2$ share a common vertex $P^\star$ on $\gamma_2$.
Let $(\mathbf{u},\mathbf{v})_{\mathcal{T}_h}=\sum_{T\in \mathcal{T}_h}\int_T \mathbf{u} \cdot \mathbf{v} \, dxdy$ and $\la \mathbf{u}, \mathbf{v}\ra_{\tilde{\mathcal{E}}}=\sum_{e\in \tilde{\mathcal{E}}}\int_e \mathbf{u}\cdot \mathbf{v} \, ds$ for some $\tilde{\mathcal{E}}\subset \mathcal{E}\cup \mathcal{E}^D\cup \mathcal{E}^N$, and $[u_{P_{\pm}^\star },v_{P_{\pm}^\star }]_{\tilde{\mathcal{V}}}=\sum_{P^\star \in \tilde{\mathcal{V}}}u(P_{\pm}^\star)v(P_{\pm}^\star)$ for some $\tilde{\mathcal{V}}\subset \mathcal{V}$.

%$\alpha\geq0$ is the penalty parameter that must be strictly positive in SIDG, $h_e$ denotes the length of $e$, and $(\mathbf{u},\mathbf{v})_{\mathcal{T}_h}=\sum_{T\in \mathcal{T}_h}\int_T \mathbf{u} \cdot \mathbf{v} \, dxdy$, $\la \mathbf{u}, \mathbf{v}\ra_{\tilde{\mathcal{E}}}=\sum_{e\in \tilde{\mathcal{E}}}\int_e \mathbf{u}\cdot \mathbf{v} \, ds$ for some $\tilde{\mathcal{E}}\subset \mathcal{E}\cup \mathcal{E}^D\cup \mathcal{E}^N$.
%$\alpha_D>0$ is the penalty parameter to enforce the Dirichlet boundary condition weakly, $\beta\geq1$, $|e|$ denotes the length of $e$, and 
%$(\mathbf{u},\mathbf{v})=\sum_{T\in\mathcal{T}_h}\int_{T}\mathbf{u}\,\mathbf{v}\,dxdy$.

% We denote by $H^\ell(\mathcal{T}_h):=\{v\in L^2(\Omega): v|_{T}\in H^{\ell}(T)\quad \froall T\in \mathcal{T}_h\}$ the broken Sobolev spaces. Thus, the DG spaces $V_h^k$ and $\Sigma_h^k$ are subsets of $H^\ell(\mathcal{T}_h)$ and $[H^\ell(\mathcal{T}_h)]^2$, respectively, for any $\ell \geq 1$. 

\begin{rem}
Compared with the original IPDG scheme for elliptic equations \cite{riviere2008discontinuous}, we drop the penalty term $\la \alpha \jl p_h\jr, \jl \xi\jr \ra_{\mathcal{E}^{\gamma_1}}$ on the barrier interfaces ${\mathcal{E}^{\gamma_1}}$ to accommodate pressure discontinuities. 
Furthermore, we replace the flux term $\la \{\mathbf{K}_m\nabla p_h\},\jl\xi\jr \ra_{\mathcal{E}^{\gamma_1}}$ 
with $\la\frac{k_b}{a}\jl p_h\jr,\jl\xi\jr \ra_{\mathcal{E}^{\gamma_1}}$
to account for the interface condition of the barriers.
For the conductive fracture interfaces $\mathcal{E}^{\gamma_2}$, we replace the flux term $\la \jl \mathbf{K}_m\nabla p_h \jr, \{\xi\}\ra_{\mathcal{E}^{\gamma_2}}$ with $\la  \frac{\partial^2 p_h}{\partial \boldsymbol{\nu}_2^2}, \{\xi\}\ra_{\mathcal{E}^{\gamma_2}}$ to account for the interface condition of the fractures. 
Then we apply the integration by parts following the original one-dimensional IPDG scheme on $\gamma_2$ to obtain the bilinear form $b_h(\cdot,\cdot)$.
\end{rem}

\begin{rem}
In the above setup, we assume the fracture and barrier interfaces split the entire domain $\Omega$ into separate pieces, as shown in Figure \ref{fig:hybridDomain}. 
It is easy to adjust the scheme to model interfaces immersed in $\Omega$ by removing the terms evaluated at the tips of the fracture in \eqref{eq:b} and \eqref{eq:G}. 
When the computational domain contains a fracture (barrier) network, each fracture (barrier) is accounted for independently, and no special condition is imposed at intersections. 
When a conductive fracture and blocking barrier intersect, one should either treat the fracture as two separate, non-connecting pieces split by the barrier or treat the barrier as two separate, non-connecting pieces split by the fracture, depending on the geological reality.
See Example \ref{ex:complex} in the numerical section for additional illustrations.
\end{rem}

\begin{rem}
The enriched Galerkin (EG) spaces can be adopted in the variational formulation \eqref{eq:scheme} to reduce the degrees of freedom. We define the EG spaces as
\begin{equation*}
V_{h,k}^{EG}=V_{h,k}^{C}+V_{h,0}^{DG}\subset V_{h,k}^{DG},
\end{equation*}
where $V_{h,k}^{C}:=\{v\in C(\Omega\setminus\gamma_1): v|_{T} \in P^k(T), \quad \forall T  \in \mathcal{T}_h \}$. Here, $C(\Omega\setminus\gamma_1)$ denotes the space of continuous functions on $\Omega\setminus\gamma_1$, which are allowed to have discontinuities only on $\gamma_1$. The EG scheme is then obtained simply by replacing the trial and test function spaces $V_{h,k}^{DG}$ with $V_{h,k}^{EG}$ in \eqref{eq:scheme}.
\end{rem}

\section{Stability analysis and error estimates}\label{Sect:theory}
In this section, we provide the stability analysis and error estimates for the IPDG scheme \eqref{eq:scheme}. 
We define the broken Sobolev spaces, for $s\geq1$,
\begin{align*}
    &H^s(\mathcal{T}_h):=\{v\in L^2(\Omega):v|_{T}\in H^s(T) \,, \forall T\in \mathcal{T}_h\},\\
    &H^s(\cup_{i=1}^3\Omega_i):=\{v\in L^2(\Omega):v|_{\Omega_i}\in H^s(\Omega_i),\, i=1,2,3\},\\
    &W^{s,\infty}(\cup_{i=1}^3\Omega_i):=\{v\in L^2(\Omega):v|_{\Omega_i}\in W^{s,\infty}(\Omega_i),\, i=1,2,3\}.
\end{align*}
Clearly, we have $W^{s,\infty}(\cup_{i=1}^3\Omega_i) \subset H^s(\cup_{i=1}^3\Omega_i)\subset H^s(\mathcal{T}_h)$. 
For any $v\in H^1(\mathcal{T}_h)$, we define the DG norm associated with the bilinear forms \eqref{eq:a} and \eqref{eq:b} as follows
\begin{align}\label{eq:DGnorm}
    \|v\|_{DG}^2:=&\|\nabla v\|_{\mathcal{T}_h}^2 +\|\alpha^{\frac 12}\jl v \jr \|_{\mathcal{E}^0\cup \mathcal{E}^D\cup \mathcal{E}^{\gamma_2}}^2+\|(\frac{k_b}{a})^{\frac 12} \jl v \jr\|_{\mathcal{E}^{\gamma_1}}^2\nonumber\\
    &+\|\frac{\partial v^-}{\partial \boldsymbol{\nu}_2 }\|_{\mathcal{E}^{\gamma_2}}^2+\|\frac{\partial v^+}{\partial \boldsymbol{\nu}_2 }\|_{\mathcal{E}^{\gamma_2}}^2+|\![\tilde{\alpha}^{\frac 12}\jl v \jr _{P_{-}^\star}]\!|_{\mathcal{V}^\circ \cup \mathcal{V}^D}^2+|\![\tilde{\alpha}^{\frac 12}\jl v\jr _{P_{+}^\star}]\!|_{\mathcal{V}^\circ \cup \mathcal{V}^D}^2,
\end{align}
where $\|\mathbf{v}\|_{\mathcal{T}_h}^2=\sum_{T\in \mathcal{T}_h}\int_T \mathbf{v}\cdot \mathbf{v}\, dxdy$, $\| v\|_{\tilde{\mathcal{E}}}^2:=\la v,v\ra_{\tilde{\mathcal{E}}}$ for some $\tilde{\mathcal{E}}\subset \mathcal{E}\cup \mathcal{E}^D\cup \mathcal{E}^N$, and $\jpl \tilde{\alpha}^{\frac 12} \jl v\jr_{P_{\pm}^\star}\jpr_{\tilde{\mathcal{V}}}^2:=[\tilde \alpha \jl v\jr_{P_{\pm}^\star},\jl v\jr_{P_{\pm}^\star}]_{\tilde{\mathcal{V}}}$ for some $\tilde{\mathcal{V}}\subset \mathcal{V}$. 

We first recall the classical {\em hp} inverse inequality in \cite[Theorem 3.92, Theorem 4.76]{Schwab1998hp}
\begin{lem}\label{lem:inv}
Let $I=(c,d)$ be a bounded interval and $h_I=d-c$. Then for every $v\in P^k(I)$ it holds that
\begin{align*}
    &\|v'\|_{L^2(I)}\leq C k^2 h_I^{-1}\|v\|_{L^2(I)}, \\
    &\|v\|_{L^\infty} \leq C k h_I^{-1/2} \|v\|_{L^2(I)}.
\end{align*}
    Let $T\in \mathcal{T}_h$. Then there exists a constant $C$ independent of $h_T$ and $k$ such that
    \begin{align*}
        &\|\nabla v\|_{L^2(T)}\leq C k^2 h_T^{-1} \|v\|_{L^2(T)} \quad \forall v \in P^k(T),\\
        &\|v\|_{L^2(\partial T)}\leq C k h_T^{-1/2} \|v\|_{L^2(T)} \quad \forall v \in P^k(T).
    \end{align*}
\end{lem}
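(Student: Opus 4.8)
The plan is to reduce each inequality to a fixed reference configuration by an affine change of variables, and to extract the powers of the mesh size and of the polynomial degree separately: the $h$-powers come from the scaling Jacobians under shape-regularity, while the sharp $k$-powers are exactly the content of the reference-element estimates cited from \cite{Schwab1998hp}. I would start with the one-dimensional statements on the reference interval $\hat I=(-1,1)$. Writing $v\in P^k(\hat I)$ in its Legendre expansion $v=\sum_{n=0}^k\hat v_n L_n$ and using $\|L_n\|_{L^2(\hat I)}^2=2/(2n+1)$ together with $|L_n(x)|\le1$ on $\hat I$, a Cauchy--Schwarz estimate over the coefficients gives $|v(x)|\le\sum_n|\hat v_n|\le\bigl(\sum_{n=0}^k\tfrac{2n+1}{2}\bigr)^{1/2}\|v\|_{L^2(\hat I)}$; since $\sum_{n=0}^k\tfrac{2n+1}{2}=\tfrac12(k+1)^2$, this produces the $L^\infty$ bound $\|v\|_{L^\infty(\hat I)}\le C k\,\|v\|_{L^2(\hat I)}$. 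The derivative estimate $\|v'\|_{L^2(\hat I)}\le C k^2\|v\|_{L^2(\hat I)}$ is the $L^2$-type Markov (Schmidt) inequality, whose sharp constant grows like $k^2$; this is precisely the content of the cited \cite[Theorem 3.92]{Schwab1998hp}, which I would invoke rather than reprove.

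Next I would transfer these to a general interval $I=(c,d)$ by the affine map $x=\tfrac{c+d}{2}+\tfrac{h_I}{2}\hat x$. Under this map one has $\|v\|_{L^2(I)}^2=\tfrac{h_I}{2}\|\hat v\|_{L^2(\hat I)}^2$, the sup-norm is invariant, and $v'(x)=\tfrac{2}{h_I}\hat v'(\hat x)$ so that $\|v'\|_{L^2(I)}^2=\tfrac{2}{h_I}\|\hat v'\|_{L^2(\hat I)}^2$. Substituting the reference bounds yields $\|v\|_{L^\infty}\le C k\,h_I^{-1/2}\|v\|_{L^2(I)}$ and $\|v'\|_{L^2(I)}\le C k^2 h_I^{-1}\|v\|_{L^2(I)}$, with $C$ independent of $h_I$ and $k$. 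For the two-dimensional statements I would fix a reference triangle $\hat T$, write the affine map $x=B_T\hat x+b_T$, and use shape-regularity in the form $\|B_T\|\lesssim h_T$, $\|B_T^{-1}\|\lesssim h_T^{-1}$, $|\det B_T|\sim h_T^2$. Since $\nabla_x v=B_T^{-T}\nabla_{\hat x}\hat v$ and $dx=|\det B_T|\,d\hat x$, one gets $\|\nabla_x v\|_{L^2(T)}^2\le\|B_T^{-1}\|^2|\det B_T|\,\|\nabla_{\hat x}\hat v\|_{L^2(\hat T)}^2$ and $\|v\|_{L^2(T)}^2=|\det B_T|\,\|\hat v\|_{L^2(\hat T)}^2$; combining with the reference-element gradient inverse estimate $\|\nabla_{\hat x}\hat v\|_{L^2(\hat T)}\le C k^2\|\hat v\|_{L^2(\hat T)}$ and taking square roots gives $\|\nabla v\|_{L^2(T)}\le C k^2 h_T^{-1}\|v\|_{L^2(T)}$. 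The trace bound follows in the same way, now tracking the edge scaling $ds\sim h_T\,d\hat s$, so that $\|v\|_{L^2(\partial T)}^2\sim h_T\|\hat v\|_{L^2(\partial\hat T)}^2$ and $\|v\|_{L^2(T)}^2\sim h_T^2\|\hat v\|_{L^2(\hat T)}^2$; inserting the reference trace-inverse estimate $\|\hat v\|_{L^2(\partial\hat T)}\le C k\|\hat v\|_{L^2(\hat T)}$ yields $\|v\|_{L^2(\partial T)}\le C k\,h_T^{-1/2}\|v\|_{L^2(T)}$.

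The genuinely nontrivial ingredient, and the step I expect to be the main obstacle in a fully self-contained argument, is establishing the \emph{sharp} $k$-dependence on the reference element: the $L^2$-Markov inequality in one dimension and its multivariate analogue on $\hat T$, which require orthogonal-polynomial (Jacobi/Legendre) machinery to control the spectral norm of the differentiation and trace operators on $P^k$. In the present setting these are exactly the estimates furnished by \cite[Theorem 3.92, Theorem 4.76]{Schwab1998hp}, so the remaining work is the routine affine-scaling bookkeeping described above, in which shape-regularity isolates the stated powers of $h_T$ while the reference constants supply the powers of $k$.
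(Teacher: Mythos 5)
Your proposal is correct and is essentially the same route the paper takes: the paper offers no proof of this lemma at all, simply recalling it from \cite[Theorem 3.92, Theorem 4.76]{Schwab1998hp}, and your argument is exactly the standard one underlying that citation (sharp reference-element estimates in $k$ via Legendre/Markov machinery, combined with affine scaling and shape-regularity to extract the powers of $h_I$ and $h_T$). Your self-contained Legendre-expansion derivation of the one-dimensional $L^\infty$ bound, including the computation $\sum_{n=0}^k\tfrac{2n+1}{2}=\tfrac12(k+1)^2$, and the scaling bookkeeping are all accurate.
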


We have the following stability result.
\begin{thm}
    Let $u,v \in V_{h,k}^{DG}(\Omega)$, and consider the bilinear forms $a_h(\cdot,\cdot)$ and $b_h(\cdot,\cdot)$ as defined in \eqref{eq:a}-\eqref{eq:b}. Assuming that $\theta \mathbf{I}\leq \mathbf{K}_m\leq \Theta \mathbf{I}$ { and $\theta\leq ak_f\leq\Theta$} for some $\theta, \Theta >0$ and that $\alpha_0, \tilde{\alpha}_0$ are sufficiently large, we then have
    \begin{align}
    & |a_h(u,v)+b_h(u,v)|\leq C_1\|u\|_{DG}\|v\|_{DG},\label{neq:cont}\\
    & a_h(u,u)+b_h(u,u) \geq C_2\|u\|_{DG}^2,\label{neq:coer}
    \end{align}
    where constants $C_1$, $C_2$ are independent of $k$ and $h$. 
\end{thm}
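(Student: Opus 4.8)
The plan is to prove \eqref{neq:cont} and \eqref{neq:coer} term by term, treating $a_h$ and $b_h$ separately and then combining. The three essential tools are the Cauchy--Schwarz inequality, the $hp$ trace and inverse estimates of Lemma~\ref{lem:inv}, and Young's inequality $2ab\le \delta a^2+\delta^{-1}b^2$. The guiding principle throughout is that the penalty weights $\alpha|_e=\alpha_0k^2h_e^{-1}$ and $\tilde\alpha|_{P^\star}=\tilde\alpha_0k^2h_\star^{-1}$ are tuned precisely to the $hp$-scaling produced by the inverse estimates: applying $\|v\|_{L^2(\partial T)}\le Ckh_T^{-1/2}\|v\|_{L^2(T)}$ to $\nabla u\in[P^{k-1}(T)]^2$ yields a boundary factor $k^2h_T^{-1}$ that is exactly cancelled by $\alpha^{-1}$, leaving constants that depend only on $\alpha_0$ (and $\Theta,\theta$), not on $h$ or $k$. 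Shape-regularity guarantees that each edge and each vertex of $\gamma_2$ is shared by a bounded number of elements, so all element/edge/vertex sums can be recombined by a discrete Cauchy--Schwarz inequality with a uniform overlap constant.

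For \textbf{continuity} \eqref{neq:cont} I would bound each term of $a_h$ and $b_h$ by a product of the square-root pieces of $\|\cdot\|_{DG}$. The volume term is controlled by $\Theta\|\nabla u\|_{\mathcal{T}_h}\|\nabla v\|_{\mathcal{T}_h}$ using $\mathbf{K}_m\le\Theta\mathbf{I}$. For the flux--jump couplings on $\mathcal{E}^0\cup\mathcal{E}^D\cup\mathcal{E}^{\gamma_2}$ I would insert $\alpha^{\pm1/2}$, writing $\la\{\mathbf{K}_m\nabla u\},\jl v\jr\ra\le\|\alpha^{-1/2}\{\mathbf{K}_m\nabla u\}\|\,\|\alpha^{1/2}\jl v\jr\|$, and then absorb the first factor into $\|\nabla u\|_{\mathcal{T}_h}$ via the trace estimate as described above; the $\sigma$-term is handled symmetrically. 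The barrier term $\la\frac{k_b}{a}\jl u\jr,\jl v\jr\ra_{\mathcal{E}^{\gamma_1}}$ is immediate from Cauchy--Schwarz. For $b_h$, the edge integrals of $ak_f\,\partial_{\boldsymbol\nu_2}u\,\partial_{\boldsymbol\nu_2}v$ are bounded directly by the tangential-derivative parts of the norm using $ak_f\le\Theta$. The vertex terms are the most delicate: each $\{\partial_{\boldsymbol\nu_2}u\}_{P^\star_\pm}$ is a pointwise value, so I would invoke the one-dimensional inverse inequality $\|w\|_{L^\infty}\le Ckh_I^{-1/2}\|w\|_{L^2(I)}$ with $w=\partial_{\boldsymbol\nu_2}u$ on the adjacent fracture edge, weight by $\tilde\alpha^{\mp1/2}$, and note that $h_\star=\min(|e_1|,|e_2|)\le h_e$ makes the resulting factor $h_\star^{1/2}h_e^{-1/2}\tilde\alpha_0^{-1/2}$ bounded by $\tilde\alpha_0^{-1/2}$. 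Summing all contributions with Cauchy--Schwarz over edges/vertices yields $C_1\|u\|_{DG}\|v\|_{DG}$ with $C_1$ uniform in $h,k$.

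For \textbf{coercivity} \eqref{neq:coer} I would set $v=u$. The volume term gives $(\mathbf{K}_m\nabla u,\nabla u)_{\mathcal{T}_h}\ge\theta\|\nabla u\|_{\mathcal{T}_h}^2$ from $\theta\mathbf{I}\le\mathbf{K}_m$, and the symmetric penalty terms reproduce $\|\alpha^{1/2}\jl u\jr\|^2$, $\|(k_b/a)^{1/2}\jl u\jr\|^2$, the tangential-derivative squares, and the vertex penalty squares $\jpl\tilde\alpha^{1/2}\jl u\jr_{P^\star_\pm}\jpr^2$ exactly. The obstruction is the consistency/symmetry cross-terms, which combine to $(\sigma-1)\la\{\mathbf{K}_m\nabla u\},\jl u\jr\ra$ on the edges (plus the analogous $b_h$ vertex couplings). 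For NIPG ($\sigma=1$) these vanish and coercivity is immediate; for SIPG and IIPG I would apply Young's inequality, $|\la\{\mathbf{K}_m\nabla u\},\jl u\jr\ra|\le\frac{\delta}{2}\|\alpha^{-1/2}\{\mathbf{K}_m\nabla u\}\|^2+\frac{1}{2\delta}\|\alpha^{1/2}\jl u\jr\|^2$, use $\|\alpha^{-1/2}\{\mathbf{K}_m\nabla u\}\|^2\le C\Theta^2\alpha_0^{-1}\|\nabla u\|_{\mathcal{T}_h}^2$, and choose $\delta$ together with $\alpha_0$ (and $\tilde\alpha_0$ for the fracture-vertex analogue) large enough to absorb a fraction of $\|\nabla u\|_{\mathcal{T}_h}^2$ and of the penalty norms, leaving $C_2\|u\|_{DG}^2$ with $C_2>0$ uniform in $h,k$.

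The main obstacle is the careful $hp$-bookkeeping at the fracture vertices of $\gamma_2$: unlike the standard edge terms, these involve pointwise evaluations of tangential derivatives, which forces use of the $L^\infty$ inverse inequality rather than a trace inequality, and demands the geometric relation $h_\star\le h_e$ together with bounded vertex overlap to keep every constant independent of both $h$ and $k$. Once this localization at the vertices is matched correctly to the penalty weight $\tilde\alpha$, the remainder of the argument follows the classical pattern for interior penalty methods.
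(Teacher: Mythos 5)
Your proposal is correct and follows essentially the same route as the paper: Cauchy--Schwarz plus the $hp$ inverse estimates of Lemma~\ref{lem:inv} for continuity, and Young's inequality with $\alpha_0,\tilde\alpha_0$ chosen large enough to absorb the consistency cross-terms for coercivity, with the vertex terms on $\gamma_2$ handled by the one-dimensional $L^\infty$ inverse inequality matched against the weight $\tilde\alpha$. Your added observations (the $(\sigma-1)$ cancellation for NIPG and the relation $h_\star\le h_e$) are refinements of, not departures from, the paper's argument.
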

\begin{proof}
    By using the Cauchy-Schwarz inequality and inverse inequalities in Lemma \ref{lem:inv}, we have
    \begin{align*}
        |a_h(u,v)|\leq &C\|\nabla u\|_{\mathcal{T}_h} \|\nabla v\|_{\mathcal{T}_h} +C \|\nabla u\|_{\mathcal{T}_h}\|k h_e^{-1/2}\jl v \jr \|_{\mathcal{E}^0\cup \mathcal{E}^D\cup \mathcal{E}^{\gamma_2}}\\
        &+C \|\nabla v\|_{\mathcal{T}_h}\|k h_e^{-1/2}\jl u \jr \|_{\mathcal{E}^0\cup \mathcal{E}^D\cup \mathcal{E}^{\gamma_2}}+\|\alpha^{\frac 12}\jl u \jr \|_{\mathcal{E}^0\cup \mathcal{E}^D\cup \mathcal{E}^{\gamma_2}}\|\alpha^{\frac 12}\jl v \jr \|_{\mathcal{E}^0\cup \mathcal{E}^D\cup \mathcal{E}^{\gamma_2}}\\
        &+\|(\frac{k_b}{a})^{\frac 12} \jl u \jr \|_{\mathcal{E}^{\gamma_1}}\|(\frac{k_b}{a})^{\frac 12} \jl v \jr \|_{\mathcal{E}^{\gamma_1}}\\
        \leq & { \frac{C_1}{2}} \|u\|_{DG}\|v\|_{DG}.
    \end{align*}
For \eqref{neq:coer}, again by using inverse inequalities in Lemma \ref{lem:inv}, we have
\begin{align*}
    a_h(u,u)\geq& C\|\nabla u\|_{\mathcal{T}_h}^2 -C \|\nabla u\|_{\mathcal{T}_h}\|k h_e^{-1/2}\jl u \jr\|_{\mathcal{E}^0\cup \mathcal{E}^D\cup \mathcal{E}^{\gamma_2}}+\|\alpha^{\frac 12}\jl u \jr\|_{\mathcal{E}^0\cup \mathcal{E}^D\cup \mathcal{E}^{\gamma_2}}^2+\|(\frac{k_b}{a})^{\frac 12} \jl u \jr \|_{\mathcal{E}^{\gamma_1}}^2\\
    \geq & \frac{C}{2}\|\nabla u\|_{\mathcal{T}_h}^2+(\alpha_0-\frac{C}{2})\|kh_e^{-1/2}\jl u \jr \|_{\mathcal{E}^0\cup \mathcal{E}^D\cup \mathcal{E}^{\gamma_2}}^2 +\|(\frac{k_b}{a})^{\frac 12} \jl u \jr \|_{\mathcal{E}^{\gamma_1}}^2\\
    \geq & \min(\frac{C}{2},\frac 12) (\|\nabla u\|_{\mathcal{T}_h}^2 +\|\alpha^{\frac 12}\jl u \jr \|_{\mathcal{E}^0\cup \mathcal{E}^D\cup \mathcal{E}^{\gamma_2}}^2+\|(\frac{k_b}{a})^{\frac 12} \jl u \jr\|_{\mathcal{E}^{\gamma_1}}^2),
\end{align*}
where we choose the parameter $\alpha_0$ sufficiently large such that $\alpha_0>C$. Again by using the Cauchy-Schwarz inequality and inverse inequalities in Lemma \ref{lem:inv}, we have
\begin{align*}
    |b_h(u,v)|\leq &C \|\frac{\partial u^-}{\partial \boldsymbol{\nu}_2 }\|_{\mathcal{E}^{\gamma_2}}\|\frac{\partial v^-}{\partial \boldsymbol{\nu}_2 }\|_{\mathcal{E}^{\gamma_2}}+C\|\frac{\partial u^+}{\partial \boldsymbol{\nu}_2 }\|_{\mathcal{E}^{\gamma_2}}\|\frac{\partial v^+}{\partial \boldsymbol{\nu}_2 }\|_{\mathcal{E}^{\gamma_2}}\\
    &+C\|\frac{\partial u^-}{\partial \boldsymbol{\nu}_2 }\|_{\mathcal{E}^{\gamma_2}}|\![kh^{-1/2}\jl v \jr _{P_{-}^\star}]\!|_{\mathcal{V}^\circ \cup \mathcal{V}^D}+C\|\frac{\partial u^+}{\partial \boldsymbol{\nu}_2 }\|_{\mathcal{E}^{\gamma_2}}|\![kh^{-1/2}\jl v \jr _{P_{+}^\star}]\!|_{\mathcal{V}^\circ \cup \mathcal{V}^D}\\
    &+C\|\frac{\partial v^-}{\partial \boldsymbol{\nu}_2 }\|_{\mathcal{E}^{\gamma_2}}|\![kh^{-1/2}\jl u \jr _{P_{-}^\star}]\!|_{\mathcal{V}^\circ \cup \mathcal{V}^D}+C\|\frac{\partial v^+}{\partial \boldsymbol{\nu}_2 }\|_{\mathcal{E}^{\gamma_2}}|\![kh^{-1/2}\jl u \jr _{P_{+}^\star}]\!|_{\mathcal{V}^\circ \cup \mathcal{V}^D}\\
    &+|\![\tilde{\alpha}^{1/2}\jl u \jr _{P_{-}^\star}]\!|_{\mathcal{V}^\circ \cup \mathcal{V}^D}|\![\tilde{\alpha}^{1/2}\jl v \jr _{P_{-}^\star}]\!|_{\mathcal{V}^\circ \cup \mathcal{V}^D}+|\![\tilde{\alpha}^{1/2}\jl u \jr _{P_{+}^\star}]\!|_{\mathcal{V}^\circ \cup \mathcal{V}^D}|\![\tilde{\alpha}^{1/2}\jl v \jr _{P_{+}^\star}]\!|_{\mathcal{V}^\circ \cup \mathcal{V}^D}\\
    \leq&{ \frac{C_1}{2}}\|u\|_{DG}\|v\|_{DG}.
\end{align*}
Similar to derive the coercivity of $a_h(\cdot,\cdot)$, we have
\begin{align*}
    b_h(u,u)\geq C_2(\|\frac{\partial u^-}{\partial \boldsymbol{\nu}_2 }\|_{\mathcal{E}^{\gamma_2}}^2+\|\frac{\partial u^+}{\partial \boldsymbol{\nu}_2 }\|_{\mathcal{E}^{\gamma_2}}^2+|\![\tilde{\alpha}^{\frac 12}\jl u \jr _{P_{-}^\star}]\!|_{\mathcal{V}^\circ \cup \mathcal{V}^D}^2+|\![\tilde{\alpha}^{\frac 12}\jl u\jr _{P_{+}^\star}]\!|_{\mathcal{V}^\circ \cup \mathcal{V}^D}^2),
\end{align*}
where we also require the parameter $\tilde{\alpha}_0$ sufficiently large. Thus, we have
\begin{align*}
    a_h(u,u)+b_h(u,u) \geq C_2\|u\|_{ DG}^2.
\end{align*}
This completes the proof.
\end{proof}

We now prove error estimates for the IPDG method by using the properties of consistency, boundedness, stability, and approximation of suitable interpolations. We first recall the classical {\em hp} interpolation \cite[Lemma 4.5]{Babuska1987}, \cite[Lemma B.3]{Melenk2010MC}.
\begin{lem}\label{lem:hp_inter}
    Let $s>1$ and $T\in \mathcal{T}_h$. There exist hp-interpolation operator $\pi_T^{hp}:H^s(T) \rightarrow P^k(T)$ such that for $0\leq t \leq s$, $v\in H^s(T)$,
    \begin{align}\label{eq:hperr1}
        \|v-\pi_T^{hp}(v)\|_{H^t(T)}\leq C\frac{h_T^{\nu-t}}{k^{s-t}} \|v\|_{H^s(T)},
    \end{align}
    where $\nu=\min(k+1,s)$. Moreover, if $s>\frac 32$, for $t=0,1$, we have
    \begin{align}\label{eq:hperr2}
        &\|D^t(v-\pi_T^{hp}(v)\|_{L^2(\partial T)}\leq C \frac{h_T^{\nu -t-1/2}}{k^{s-t-1/2}}\|v\|_{H^s(T)},\\
        &\|D^t(v-\pi_T^{hp}(v))\|_{L^{\infty}(T)}\leq C\frac{h_T^{\nu-t-1}}{k^{s-t-1}} \|v\|_{H^s(T)}, \label{eq:hperrLinf}
    \end{align}
    Here the constant $C$ is independent of $k$ and $h_T$.
\end{lem}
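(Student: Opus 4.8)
The plan is to establish the three bounds by the standard reference-element technique of $hp$-approximation theory, capturing all $k$-dependence on a fixed reference triangle and recovering the $h_T$-powers by scaling. Fix a reference triangle $\hat{T}$ and let $F_T:\hat{T}\to T$ be the affine map onto $T$; by shape-regularity $\|F_T'\|\sim h_T$ and $\|(F_T^{-1})'\|\sim h_T^{-1}$. I would define the operator by pullback, $\pi_T^{hp}(v):=(\hat{\pi}^{hp}\hat{v})\circ F_T^{-1}$ with $\hat{v}:=v\circ F_T$, where $\hat{\pi}^{hp}:H^s(\hat{T})\to P^k(\hat{T})$ is a fixed reference interpolation operator. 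The lemma then reduces to (i) proving the three estimates on $\hat{T}$ with explicit $k$-dependence, and (ii) tracking the powers of $h_T$ through the change of variables.

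The core analytic ingredient is the reference approximation bound
\[
\|\hat{v}-\hat{\pi}^{hp}\hat{v}\|_{H^t(\hat{T})}\le C\,k^{-(s-t)}\,\|\hat{v}\|_{H^s(\hat{T})},\qquad 0\le t\le s,
\]
which I would obtain by expanding $\hat{v}$ in an $L^2(\hat{T})$-orthogonal polynomial basis (Jacobi / Koornwinder--Dubiner polynomials on the simplex), truncating at degree $k$, and estimating the decay of the expansion coefficients in terms of the Sobolev regularity $s$; this is exactly the content of the cited works \cite{Babuska1987,Melenk2010MC}, and a low-order correction enforcing the vertex and edge values can be appended without degrading the rate. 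Granting this bound, the trace estimate \eqref{eq:hperr2} follows from a trace inequality applied to $w=D^t(\hat{v}-\hat{\pi}^{hp}\hat{v})$; the threshold $s>\tfrac{3}{2}$ is precisely what makes the traces of the first derivatives well defined, and the trace inequality produces the half-power loss visible in the exponents $s-t-\tfrac{1}{2}$ and $\nu-t-\tfrac{1}{2}$ of \eqref{eq:hperr2}. The $L^\infty$ estimate \eqref{eq:hperrLinf} I would obtain by combining the Sobolev embedding $H^{\beta}(\hat{T})\hookrightarrow C^0(\bar{\hat{T}})$, $\beta>1$ in two dimensions, for the low-order part with the $hp$ inverse inequalities of Lemma \ref{lem:inv} for the polynomial part; the inverse estimates account for the (possibly negative) exponents $h_T^{\nu-t-1}/k^{s-t-1}$.

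It then remains to undo the scaling. In two dimensions the affine change of variables gives
\[
\|w\|_{H^t(T)}\sim h_T^{\,1-t}\,\|\hat{w}\|_{H^t(\hat{T})},
\qquad
\|\hat{v}\|_{H^s(\hat{T})}\sim h_T^{\,s-1}\,\|v\|_{H^s(T)},
\]
and composing these with the reference bound produces the factor $h_T^{\,s-t}k^{-(s-t)}$; the same bookkeeping, with $ds\sim h_T\,d\hat{s}$ on the boundary and the derivative factors $h_T^{-t}$, yields the exponents in \eqref{eq:hperr2} and \eqref{eq:hperrLinf}. The regularity exponent $s$ is replaced by $\nu=\min(k+1,s)$ because, once $s>k+1$, the operator reproduces only polynomials of degree at most $k$, so the best attainable algebraic rate in $h_T$ saturates at $k+1$; this is handled by interpolating the reference estimate between the integer levels $t=0,\dots,k+1$.

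I expect the main obstacle to be the sharp reference approximation bound with explicit $k$-dependence: constructing a single operator $\hat{\pi}^{hp}$ that is simultaneously quasi-optimal in the $H^t$, trace, and $L^\infty$ norms while reproducing $P^k$ requires delicate estimates on the decay of the Jacobi coefficients and, on the simplex, a careful treatment of the vertex and edge degrees of freedom, which is the technical heart of \cite{Babuska1987,Melenk2010MC}. Once this reference estimate is in hand, the trace inequalities, the Sobolev embedding, the inverse estimates of Lemma \ref{lem:inv}, and the scaling are routine. Since the statement is quoted verbatim from those references, it suffices in the paper to cite them, but the sketch above records the structure of the underlying argument.
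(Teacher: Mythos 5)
The paper offers no proof of this lemma: it is recalled verbatim from \cite[Lemma 4.5]{Babuska1987} and \cite[Lemma B.3]{Melenk2010MC}, and your closing remark that a citation suffices is exactly what the authors do. Your sketch of the underlying argument (reference approximation bound with explicit $k$-decay via orthogonal polynomial expansions, trace inequality for the boundary estimate, Sobolev embedding plus inverse estimates for the $L^\infty$ bound, affine scaling for the $h_T$-powers) is a faithful outline of the proofs in those references; the only caveat worth recording is that the scaling identities you write as norm equivalences hold for seminorms, and the full-norm statement must be routed through a Bramble--Hilbert-type argument, as is done in the cited works.
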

Then we define the interpolation operator $\Pi_{hp}:\, H^s(\cup_{i=1}^3\Omega_i)\rightarrow V_{h,k}^{DG}(\Omega)$, $\Pi_{hp} v|_T=\pi_T^{hp}(v)$. By the definition of the DG norm \eqref{eq:DGnorm} and Lemma \ref{lem:hp_inter}, we have
\begin{align}\label{err:apro}
    \|v-\Pi_{hp}v\|_{DG}\leq C \frac{h^{\nu-1}}{k^{s-\frac 32}}\|v\|_{H^s(\cup_{i=1}^3\Omega_i)} \quad \forall v \in H^s(\cup_{i=1}^3\Omega_i).
\end{align}
The following theorem is our main result in this section.
\begin{thm}
    Let $p\in W^{s,\infty}(\cup_{i=1}^3\Omega_i),\, s>2,$ be the exact solution of the problem \eqref{eq:PDEmodel} and assume that $\theta \mathbf{I}\leq \mathbf{K}_m\leq \Theta \mathbf{I}$ { and $\theta\leq ak_f\leq\Theta$} for some $\theta, \, \Theta >0$. The numerical solution $p_h$ of the IPDG scheme \eqref{eq:scheme} satisfies
    \begin{align}\label{err:energy}
        \|p-p_h\|_{DG}\leq C\frac{h^{\nu-1}}{k^{s-2}}\|p\|_{W^{s,\infty}(\cup_{i=1}^3\Omega_i)},
    \end{align}
    where $\nu=\min(k+1,s)$ and the constant $C$ is independent of $k$ and $h$. Furthermore, the following optimal order $L^2$-error estimates of SIPG methods hold
    \begin{align}\label{err:L2}
    \|p-p_h\|_{L^2(\Omega)}\leq C\frac{h^{\nu}}{k^{s-2}}\|p\|_{W^{s,\infty}(\cup_{i=1}^3\Omega_i)}.
    \end{align}
\end{thm}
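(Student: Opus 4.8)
The plan is to follow the classical four-step energy argument for interior penalty methods---consistency, coercivity, boundedness, and approximation---and then to add an Aubin--Nitsche duality step for the $L^2$ bound, paying particular attention to the fracture interface terms carried by $b_h(\cdot,\cdot)$. First I would establish Galerkin orthogonality. Since $p\in W^{s,\infty}(\cup_i\Omega_i)$ with $s>2$ is smooth on each subdomain, integrating $-\nabla\cdot(\mathbf{K}_m\nabla p)=q$ by parts on every $T\in\mathcal{T}_h$ and invoking the interface relations in \eqref{eq:PDEmodel} shows that $p$ satisfies the discrete weak form \eqref{eq:scheme}: on $\mathcal{E}^\circ$ and across $\gamma_2$ the pressure is continuous, so $\jl p\jr=0$ and $\jl p\jr_{P^\star_\pm}=0$; the barrier flux law converts the bulk flux term on $\mathcal{E}^{\gamma_1}$ into $\la\frac{k_b}{a}\jl p\jr,\jl\xi\jr\ra_{\mathcal{E}^{\gamma_1}}$; and the one-dimensional fracture equation, tested against $\{\xi\}$ and integrated by parts along $\gamma_2$, reproduces exactly the edge and vertex terms of $b_h$ together with $G_h$. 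Subtracting the scheme \eqref{eq:scheme} satisfied by $p_h$ gives $a_h(p-p_h,\xi)+b_h(p-p_h,\xi)=0$ for all $\xi\in V_{h,k}^{DG}$.

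Next I split $p-p_h=\eta+\chi$ with $\eta=p-\Pi_{hp}p$ and $\chi=\Pi_{hp}p-p_h\in V_{h,k}^{DG}$. Coercivity \eqref{neq:coer} yields $C_2\|\chi\|_{DG}^2\le a_h(\chi,\chi)+b_h(\chi,\chi)$, and orthogonality turns the right-hand side into $-[a_h(\eta,\chi)+b_h(\eta,\chi)]$. The boundedness estimate \eqref{neq:cont} cannot be used verbatim, because $\eta$ is not a polynomial and the inverse inequalities of Lemma \ref{lem:inv} invoked there are unavailable for $\eta$. I would therefore prove a strengthened continuity bound $|a_h(\eta,\chi)+b_h(\eta,\chi)|\le C\,\|\eta\|_{DG,*}\,\|\chi\|_{DG}$, where $\|\eta\|_{DG,*}$ augments $\|\eta\|_{DG}$ by the flux-trace quantity $\|\alpha^{-1/2}\{\mathbf{K}_m\nabla\eta\}\|_{\mathcal{E}^0\cup\mathcal{E}^D\cup\mathcal{E}^{\gamma_2}}$ and the corresponding fracture contributions from $b_h$; each factor carrying a polynomial test function is reduced back to $\|\chi\|_{DG}$ using the gradient and trace inverse inequalities of Lemma \ref{lem:inv}. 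Bounding $\|\eta\|_{DG,*}$ via Lemma \ref{lem:hp_inter} and combining with the triangle inequality then gives \eqref{err:energy}.

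The heart of the argument, and the main obstacle, is the fracture block $b_h(\eta,\chi)$---specifically the vertex terms $[ak_f\{\frac{\partial\eta}{\partial\boldsymbol{\nu}_2}\}_{P^\star_\pm},\jl\chi\jr_{P^\star_\pm}]$ and their symmetric counterparts $[ak_f\{\frac{\partial\chi}{\partial\boldsymbol{\nu}_2}\}_{P^\star_\pm},\jl\eta\jr_{P^\star_\pm}]$. These are \emph{pointwise} evaluations on the one-dimensional interface $\gamma_2$, so the ordinary $L^2$ trace bounds of \eqref{eq:hperr2} do not directly control them. Instead I would combine a one-dimensional sampling inequality on each fracture edge with the $L^\infty$ interpolation estimate \eqref{eq:hperrLinf}; this is precisely why the stronger regularity $p\in W^{s,\infty}(\cup_i\Omega_i)$ is imposed rather than mere $H^s$ regularity. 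Tracking the vertex penalty weight $\tilde{\alpha}=\tilde{\alpha}_0k^2h_\star^{-1}$ together with the one-dimensional inverse inequality $\|v\|_{L^\infty}\le Ck h_I^{-1/2}\|v\|_{L^2(I)}$ applied on the polynomial side, the pointwise fracture terms contribute the factor $h^{\nu-1}k^{-(s-2)}$, which is half a power of $k$ worse than the clean interpolation bound \eqref{err:apro}; this is the source of the sub-optimal $k$-dependence in \eqref{err:energy}.

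Finally, for the $L^2$ estimate \eqref{err:L2} I would run an Aubin--Nitsche duality argument, available only for SIPG ($\sigma=-1$), since the symmetry of $a_h+b_h$ is exactly what makes the scheme adjoint consistent. Let $\psi$ solve the interface problem \eqref{eq:PDEmodel}--\eqref{eq:BdrCond} with right-hand side $p-p_h$ and homogeneous data, and invoke the $H^2$ elliptic regularity $\|\psi\|_{H^2(\cup_i\Omega_i)}\le C\|p-p_h\|_{L^2(\Omega)}$. Adjoint consistency gives $\|p-p_h\|_{L^2(\Omega)}^2=a_h(p-p_h,\psi)+b_h(p-p_h,\psi)$, and Galerkin orthogonality then lets me replace $\psi$ by $\psi-\Pi_{hp}\psi$. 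Applying the strengthened continuity bound to the pair $(p-p_h,\ \psi-\Pi_{hp}\psi)$, and estimating $\|\psi-\Pi_{hp}\psi\|_{DG,*}$ through \eqref{err:apro} with $s=2$, gains one additional power of $h$ over \eqref{err:energy} and yields \eqref{err:L2}.
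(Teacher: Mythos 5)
Your proposal is correct and follows essentially the same route as the paper: Galerkin orthogonality, the coercivity-plus-boundedness argument applied to the splitting $p-p_h=(p-\Pi_{hp}p)-(p_h-\Pi_{hp}p)$ with inverse inequalities on the polynomial part and the {\em hp}-interpolation (including $L^\infty$) bounds on the interpolation error, the $W^{s,\infty}$ regularity absorbed precisely at the lower-dimensional fracture terms, and an Aubin--Nitsche duality argument with the adjoint interface problem for the SIPG $L^2$ bound. Your explicit identification of the vertex evaluations on $\gamma_2$ as the source of the half-power loss in $k$, and your packaging of the boundedness step via an augmented norm rather than term-by-term estimation, are only presentational differences from the paper's proof.
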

\begin{proof}
    By the consistency of the IPDG scheme, we have the Galerkin orthogonality:
    \begin{align}\label{eq:ortho}
        a_h(p-p_h,\xi)+b_h(p-p_h,\xi)=0 \quad \forall \xi \in V_{h,k}^{DG}(\Omega).
    \end{align}
    We rewrite $p-p_h=(p-\Pi_{hp}p)-(p_h-\Pi_{hp}p):=\eta-\xi$, by using the coercivity \eqref{neq:coer} and the error estimate of the interpolation \eqref{err:apro}, we have
    \begin{align*}
        \|\xi\|_{DG}^2\leq& C (a_h(\xi,\xi)+b_h(\xi,\xi))\\
        =&C(a_h(\eta,\xi)+b_h(\eta,\xi))
    \end{align*}
Let's first estimate $a_h(\eta,\xi)$, by using the error estimate of the interpolation and inverse inequality for $\xi$, we have
    \begin{align*}
        a_h(\eta,\xi)\leq & C\frac{h^{\nu-1}}{k^{s-1}}\|p\|_{H^s(\cup_{i=1}^3\Omega_i)}\|\nabla \xi\|_{\mathcal{T}_h}+C\frac{h^{\nu-1}}{k^{s-1/2}}\|p\|_{H^s(\cup_{i=1}^3\Omega_i)}\|\alpha^{1/2}\jl \xi \jr\|_{\mathcal{E}^0\cup \mathcal{E}^D\cup \mathcal{E}^{\gamma_2}}\\
        &+C\frac{h^{\nu-1}}{k^{s-3/2}}\|p\|_{H^s(\cup_{i=1}^3\Omega_i)}\|\nabla \xi\|_{\mathcal{T}_h}+C\frac{h^{\nu-1/2}}{k^{s-1/2}}\|p\|_{H^s(\cup_{i=1}^3\Omega_i)}\|(\frac{k_b}{a})^{1/2}\jl \xi\jr\|_{\mathcal{E}^{\gamma_1}}\\
        \leq& C\frac{h^{\nu-1}}{k^{s- 3/2}}\|p\|_{H^s(\cup_{i=1}^3\Omega_i)}\|\xi\|_{DG},
    \end{align*}
For $b_h(\eta,\xi)$, we have the following estimate
\begin{align*}
    b_h(\eta,\xi)=I+II+III,
 \end{align*}   
 where
 \begin{align*}
   I&=\frac 12  \la ak_f \frac{\partial \eta^-}{\partial \boldsymbol{\nu}_2}, \frac{\partial \xi^-}{\partial \boldsymbol{\nu}_2}\ra_{\mathcal{E}^{\gamma_2}}+\frac 12 \la  ak_f \frac{\partial \eta^+}{\partial \boldsymbol{\nu}_2}, \frac{\partial \xi^+}{\partial \boldsymbol{\nu}_2}\ra_{\mathcal{E}^{\gamma_2}}\\
   &\leq C\sum_{T\in \mathcal{T}_h^{\gamma_2,-}} \frac{h^{\nu-3/2}}{k^{s- 3/2}}\|p\|_{H^s(T)}\|\frac{\partial \xi^-}{\partial \boldsymbol{\nu}_2}\|_{L^2(\partial T\cap \gamma_2)}+C\sum_{T\in \mathcal{T}_h^{\gamma_2,+}} \frac{h^{\nu-3/2}}{k^{s- 3/2}}\|p\|_{H^s(T)}\|\frac{\partial \xi^+}{\partial \boldsymbol{\nu}_2}\|_{L^2(\partial T\cap \gamma_2)}\\
   &\leq C\frac{h^{\nu-1}}{k^{s- 3/2}}\|p\|_{W^{s,\infty}(\Omega_{\gamma_2}^{-}\cup \Omega_{\gamma_2}^+)}(\|\frac{\partial \xi^-}{\partial \boldsymbol{\nu}_2}\|_{L^2(\mathcal{E}^{\gamma_2})}+\|\frac{\partial \xi^+}{\partial \boldsymbol{\nu}_2}\|_{L^2(\mathcal{E}^{\gamma_2})}).
\end{align*}
In the last inequality, we used the $\|p\|_{H^s(T)}\leq C h\|p\|_{W^{s,\infty}(T)}$ and the Cauchy-Schwarz inequality.
For $II$, we use the inverse inequality in Lemma \ref{lem:inv} for $\frac{\partial \xi^\pm}{\partial \boldsymbol{\nu}_2}$ and the estimate of $\eta$ to get
\begin{align*}
    II=&-\frac 12[ak_f\{\frac{\partial \eta}{\partial \boldsymbol{\nu}_2}\}_{P_{-}^{\star}}, \jl \xi \jr_{P_{-}^\star}]_{\mathcal{V}^\circ \cup \mathcal{V}^D} -\frac 12[ak_f\{\frac{\partial \eta}{\partial \boldsymbol{\nu}_2}\}_{P_{+}^{\star}}, \jl \xi \jr_{P_{+}^\star}]_{\mathcal{V}^\circ \cup \mathcal{V}^D} \\
    &+\frac \sigma 2[ak_f\{\frac{\partial \xi }{\partial \boldsymbol{\nu}_2}\}_{P_{-}^{\star}}, \jl \eta \jr_{P_{-}^\star}]_{\mathcal{V}^\circ \cup \mathcal{V}^D} +\frac \sigma 2[ak_f\{\frac{\partial \xi}{\partial \boldsymbol{\nu}_2}\}_{P_{+}^{\star}}, \jl \eta \jr_{P_{+}^\star}]_{\mathcal{V}^\circ \cup \mathcal{V}^D} \\
    &+C\frac{h^{\nu-3/2}}{k^{s- 2}}\|p\|_{H^s(\mathcal{T}_h^{\gamma_2,-})}\|\frac{\partial \xi^-}{\partial \boldsymbol{\nu}_2}\|_{L^2(\mathcal{E}^{\gamma_2})})+C\frac{h^{\nu-3/2}}{k^{s- 2}}\|p\|_{H^s(\mathcal{T}_h^{\gamma_2,+})}\|\frac{\partial \xi^+}{\partial \boldsymbol{\nu}_2}\|_{L^2(\mathcal{E}^{\gamma_2})})\\
    &\leq C\frac{h^{\nu-1}}{k^{s- 2}}\|p\|_{W^{s,\infty}(\Omega_{\gamma_2}^{-}\cup \Omega_{\gamma_2}^+)}(\jpl\tilde{\alpha}^{1/2} \jl \xi \jr _{P_{-}^\star} \jpr_{\mathcal{V}^\circ \cup \mathcal{V}^D}+\jpl\tilde{\alpha}^{1/2} \jl \xi \jr _{P_{+}^\star} \jpr_{\mathcal{V}^\circ \cup \mathcal{V}^D}+\|\frac{\partial \xi^-}{\partial \boldsymbol{\nu}_2}\|_{L^2(\mathcal{E}^{\gamma_2})})+\|\frac{\partial \xi^+}{\partial \boldsymbol{\nu}_2}\|_{L^2(\mathcal{E}^{\gamma_2})})).
\end{align*}
Finally, we estimate $III$ by using the Cauchy-Schwarz inequality and the error estimation of the interpolation to obtain
\begin{align*}
    III=& [\tilde{\alpha}\jl \eta \jr_{P_{-}^\star},\jl \xi \jr_{P_{-}^\star}]_{\mathcal{V}^\circ \cup \mathcal{V}^D} +[\tilde{\alpha}\jl \eta \jr_{P_{+}^\star},\jl \xi \jr_{P_{+}^\star}]_{\mathcal{V}^\circ \cup \mathcal{V}^D}\\
    \leq& C\frac{h^{\nu-3/2}}{k^{s- 2}}\|p\|_{H^s(\mathcal{T}_h^{\gamma_2,-})}\jpl\tilde{\alpha}^{1/2} \jl \xi \jr _{P_{-}^\star} \jpr_{\mathcal{V}^\circ \cup\mathcal{V}^D}+C\frac{h^{\nu-3/2}}{k^{s- 2}}\|p\|_{H^s(\mathcal{T}_h^{\gamma_2,+})}\jpl\tilde{\alpha}^{1/2} \jl \xi \jr _{P_{+}^\star}\jpr_{\mathcal{V}^\circ \cup\mathcal{V}^D}\\
    \leq &C\frac{h^{\nu-1}}{k^{s- 2}}\|p\|_{W^{s,\infty}(\Omega_{\gamma_2}^{-}\cup \Omega_{\gamma_2}^+)}(\jpl\tilde{\alpha}^{1/2} \jl \xi \jr _{P_{-}^\star} \jpr_{\mathcal{V}^\circ \cup\mathcal{V}^D}+\jpl\tilde{\alpha}^{1/2} \jl \xi \jr _{P_{+}^\star} \jpr_{\mathcal{V}^\circ \cup\mathcal{V}^D}).
\end{align*}
Therefore, we have
\begin{align*}
    \|\xi\|_{DG}\leq C \frac{h^{\nu-1}}{k^{s- 2}}\|p\|_{W^{s,\infty}(\cup_{i=1}^{3}\Omega_i)}
\end{align*}
This completes the proof of \eqref{err:energy} by the triangle inequality and \eqref{err:apro}. For the $L^2$-error estimates of SIPG methods, we use the standard duality argument. We define the auxiliary function $\psi$ as the solution of the adjoint problem
\begin{equation}\label{eq:adjointmodel}
\begin{cases}
        -\nabla \cdot(\mathbf{K}_m \nabla \psi)=p-p_h & \text{ in } \Omega \setminus (\gamma_1\cup \gamma_2),\\
        \mathbf{K}_m^-\nabla \psi^- \cdot \mathbf{n}^-+\mathbf{K}_m^+\nabla \psi^+ \cdot \mathbf{n}^+=0 &  \text{ on } \gamma_1,\\
        \mathbf{K}_m^-\nabla \psi^- \cdot \mathbf{n}^-=k_b\frac{\psi^+-\psi^-}{a} &  \text{ on } \gamma_1,\\
        \psi^--\psi^+=0 & \text{ on } \gamma_2,\\
         \mathbf{K}_m^-\nabla \psi^- \cdot \mathbf{n}^-+\mathbf{K}_m^+\nabla \psi^+ \cdot \mathbf{n}^+=ak_f\frac{\partial^2 \psi^+} {\partial \boldsymbol{\nu}_2^2} & \text{ on } \gamma_2, \\
        \psi =0 & \text{ on } \partial \Omega 
\end{cases}
\end{equation}
    Since the $a_h(\cdot,\cdot)$ satisfies the adjoint consistency condition, that is
    \begin{align}\label{eq:adj}
        a_h(v,\psi)+b_h(v,\psi)=(p-p_h,v)_{\mathcal{T}_h} \quad \forall v\in H^2(\mathcal{T}_h),
    \end{align}
    we take $\psi_I\in V_{h,1}^{DG}(\Omega)$ to be a piecewise linear interpolant of $\psi$. Then, we choose $v=p-p_h$ in \eqref{eq:adj} and use \eqref{neq:cont}, \eqref{err:energy}, and \eqref{eq:ortho} to obtain
    \begin{align*}
        \|p-p_h\|_{0,\Omega}^2&=a_h(p-p_h,\psi)+b_h(p-p_h,\psi)=a_h(p-p_h,\psi-\psi_I)+b_h(p-p_h,\psi-\psi_I)\\
        &\leq C\frac{h^{\nu-1}}{k^{s-2}}\|p\|_{W^{s,\infty}(\cup_{i=1}^3\Omega_i)}h|\psi|_{H^2(\cup_{i=1}^3\Omega_i)}.
    \end{align*}
    Finally, the regularity assumption of the adjoint problem gives $|\psi|_{H^2(\cup_{i=1}^3\Omega_i)}\leq C_r\|p-p_h\|_{L^2(\Omega)}$ with $C_r$ depending only on the domain $\Omega$. Hence, we get the desired optimal error estimate \eqref{err:L2}.
\end{proof}

\begin{rem}
    For the NIPG and IIPG methods, we only obtain the suboptimal order convergence in $L^2$ due to losing the adjoint consistency. However, one can recover the optimal rate of convergence in the $L^2$-norm by using super penalties, refer to \cite{Arnold2002SINUM}. If we assume $\Omega_i,i=1,2,3$ are convex Lipschitz domains, the elliptic regularity theories give us the regularity assumption of the adjoint problem.
\end{rem}

\section{Numerical experiments}\label{Sect:tests}

In this section, we present numerical experiments to test the scheme established in Section \ref{Sect:scheme}. 
The first example is a convergence test to validate the theoretical analysis in Section \ref{Sect:theory}. The $P^k$-DG spaces with $k=1,2,3$ are used in this example. 
The remaining numerical tests are widely adopted benchmarks presented in increasing order of complexity to demonstrate the performance of our method.
The $P^1$-DG space is adopted.
The source terms are set to zero and the flow is driven by the boundary conditions in these examples.
When a reference solution is available for the test, we do not include the numerical results of other discrete fracture models in the presentation for comparison, as our numerical results exhibit excellent agreement with the reference solution. 
That said, we refer the readers to \cite{flemisch2018benchmarks, glaser2022comparison} for a comprehensive comparison with other discrete fracture models.
To save space, all computations are based on the SIPG scheme.

\begin{exmp}\label{ex:convergence}
\textbf{convergence test}

In this example, we test the convergence of our scheme \eqref{eq:scheme} for the interface problem \eqref{eq:PDEmodel} - \eqref{eq:BdrCond} in the domain $\Omega=(0,1)^2$ with an interface  $\gamma=\{(x,y):x=0.5,0\leq y\leq 1\}$.
The permeability of bulk matrix is $\mathbf{K}_m=\mathbf{I}$, where $\mathbf{I}$ is the identity matrix.
Two cases are considered in this example.

In the case (a), we investigate a conductive fracture with aperture $a=10^{-4}$ and permeability $k_f=10^{4}$.
One can verify that the following setting,
\begin{equation}\label{eq:sol_source_fracture}
\begin{cases}
p^{-}=\sin(x)\sin(y),~ q^{-}=2\sin(x)\sin(y), &(x,y)\in\Omega^{-}=(0,\frac12)\times(0,1),\\
p^{+}=p^{-}+\sin(\frac12)(x-\frac12)\sin(y),~ q^{+}=q^{-}+\sin(\frac12)(x-\frac12)\sin(y), &(x,y)\in\Omega^{+}=(\frac12,1)\times(0,1),\\
{ q_f=0,}& { (x,y)\in\gamma=\{\frac12\}\times[0,1]},
\end{cases}
\end{equation}
along with the corresponding Dirichlet boundary conditions, provides a solution to the problem \eqref{eq:PDEmodel} - \eqref{eq:BdrCond} that involves only a conductive fracture interface.

In the case (b), we investigate a blocking barrier with aperture $a=10^{-4}$ and permeability $k_b=10^{-4}$.
One can verify that the following settings,
\begin{equation}\label{eq:sol_source_barrier}
\begin{cases}
p^{-}=\sin(x)\sin(y),~ q^{-}=2\sin(x)\sin(y), &(x,y)\in\Omega^{-}=(0,\frac12)\times(0,1),\\
p^{+}=p^{-}+\cos(\frac12)\sin(y),~ q^{+}=q^{-}+\cos(\frac12)\sin(y), &(x,y)\in\Omega^{+}=(\frac12,1)\times(0,1),
\end{cases}
\end{equation}
along with the corresponding Dirichlet boundary conditions, provides a solution to the problem \eqref{eq:PDEmodel} - \eqref{eq:BdrCond} that involves only a blocking barrier interface.

We perform the computation on triangular grids with different levels of refinement and various orders of DG space. 
See an illustration of the grid with $h=\frac18$ in Figure \ref{fig:grid_converge_test}.
{We take the penalty parameters $\alpha_0=\tilde{\alpha}_0=10$ in case (a), and $\alpha_0=10$ in case (b).}
We present the errors $\|p-p_h\|_{L^2}$, $\|\nabla(p-p_h)\|_{\mathcal{T}_h}$ and $\|p-p_h\|_{DG}$ in Table \ref{tab:accuracy_fracture} and Table \ref{tab:accuracy_barrier} for the case (a) and case (b), respectively. 
From the tables, we can clearly observe an optimal convergence order of $(k+1)$ in the $L^2$-norm, and an optimal convergence order of $k$ in the broken semi-$H^1$-norm and energy norm.

\begin{figure}[!hbpt]
 \centering
 \includegraphics[width=0.45\textwidth]{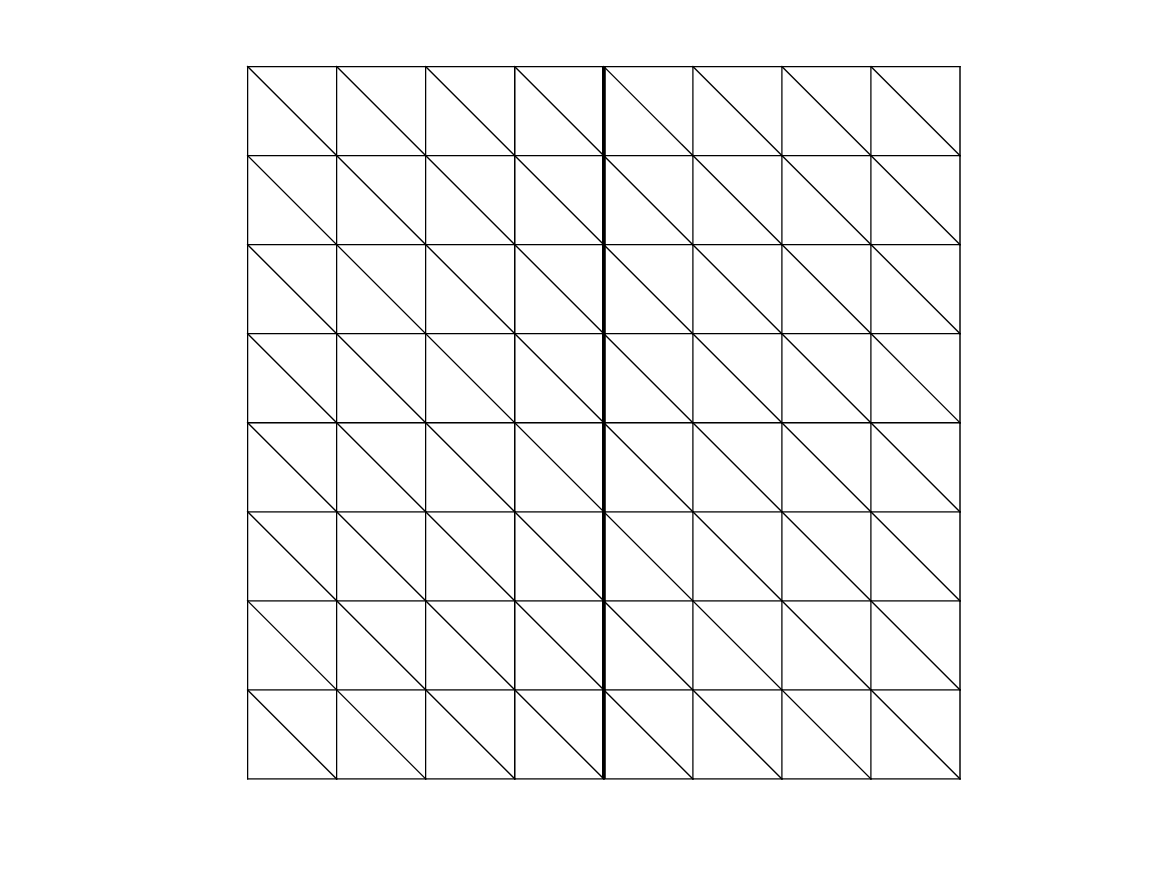}
  \setlength{\abovecaptionskip}{2pt} 
 \caption{\textbf{Example \ref{ex:convergence}: convergence test.} The distribution of the fracture (indicated by the black thick line segment) and the grid with $h=1/8$ used in the computation.}
 \label{fig:grid_converge_test}
\end{figure}

\begin{table}[!htbp]
\centering
\begin{tabular}{ccccccccc}
\toprule[1.5pt]
\multicolumn{8}{c}{$P^1$-SIPG} \\
\cline{2-7} 
$h$ & $\|p-p_h\|_{L^2}$ & Order & $\|\nabla (p-p_h)\|_{\mathcal{T}_h}$ & Order & $\|p-p_h\|_{DG}$ & Order &\\
\midrule
$1/16$ & 3.81E-04  & - & 3.06E-02 & - & 3.66E-02 & -\\
$1/32$ & 9.65E-05 & 1.98 & 1.52E-02 & 1.01 & 1.82E-02 & 1.01 \\
$1/64$ & 2.43E-05 & 1.99 & 7.59E-03 & 1.00 & 9.06E-03 & 1.00 \\
$1/128$ & 6.09E-06 & 2.00 & 3.79E-03 & 1.00 & 4.52E-03 & 1.00 \\
$1/256$ & 1.53E-06 & 2.00 & 1.89E-03 & 1.00 & 2.26E-03 & 1.00 \\
\midrule
\multicolumn{8}{c}{$P^2$-SIPG} \\
\cline{2-7} 
$h$ & $\|p-p_h\|_{L^2}$ & Order & $\|\nabla (p-p_h)\|_{\mathcal{T}_h}$ & Order & $\|p-p_h\|_{DG}$ & Order &\\
\midrule
$1/8$ & 8.64E-06 & - & 6.32E-04 & - & 8.08E-04 & - \\
$1/16$ & 1.09E-06 & 2.99 & 1.59E-04 & 1.99 & 2.00E-04 & 2.01 \\
$1/32$ & 1.37E-07 & 2.99 & 3.98E-05 & 2.00 & 4.98E-05 & 2.01 \\
$1/64$ & 1.72E-08 & 3.00 & 9.97E-06 & 2.00 & 1.24E-05 & 2.00 \\
$1/128$ & 2.15E-09 & 3.00 & 2.49E-06 & 2.00 & 3.10E-06 & 2.00 \\
\midrule
\multicolumn{8}{c}{$P^3$-SIPG} \\
\cline{2-7} 
$h$ & $\|p-p_h\|_{L^2}$ & Order & $\|\nabla (p-p_h)\|_{\mathcal{T}_h}$ & Order & $\|p-p_h\|_{DG}$ & Order &\\
\midrule
$1/4$ & 4.84E-06 & - & 2.00E-04 & - & 2.11E-04 & -\\
$1/8$ & 2.92E-07 & 4.05 & 2.45E-05 & 3.02 & 2.57E-05 & 3.03 \\
$1/16$ & 1.78E-08 & 4.03 & 3.04E-06 & 3.01 & 3.17E-06 & 3.02 \\
$1/32$ & 1.10E-09 & 4.02 & 3.78E-07 & 3.01 & 3.94E-07 & 3.01 \\
$1/64$ & 6.85E-11 & 4.01 & 4.72E-08 & 3.00 & 4.91E-08 & 3.00 \\
\bottomrule[1.5pt]
\end{tabular}
\caption{
\textbf{\textbf{Example \ref{ex:convergence}: convergence test.}} 
Error table of the case (a) that involves a conductive fracture interface.}
\label{tab:accuracy_fracture}
\end{table}

\begin{table}[!htbp]
\centering
\begin{tabular}{ccccccccc}
\toprule[1.5pt]
\multicolumn{8}{c}{$P^1$-SIPG} \\
\cline{2-7} 
$h$ & $\|p-p_h\|_{L^2}$ & Order & $\|\nabla (p-p_h)\|_{\mathcal{T}_h}$ & Order & $\|p-p_h\|_{DG}$ & Order &\\
\midrule
$1/16$ & 3.45E-04 & - & 2.63E-02 & - & 3.08E-02 & -\\
$1/32$ & 8.84E-05 & 1.96 & 1.32E-02 & 1.00 & 1.54E-02 & 1.00 \\
$1/64$ & 2.24E-05 & 1.98 & 6.60E-03 & 1.00 & 7.67E-03 & 1.00 \\
$1/128$ & 5.63E-06 & 1.99 & 3.30E-03 & 1.00 & 3.83E-03 & 1.00 \\
$1/256$ & 1.41E-06 & 2.00 & 1.65E-03 & 1.00 & 1.91E-03 & 1.00 \\
\midrule
\multicolumn{8}{c}{$P^2$-SIPG} \\
\cline{2-7} 
$h$ & $\|p-p_h\|_{L^2}$ & Order & $\|\nabla (p-p_h)\|_{\mathcal{T}_h}$ & Order & $\|p-p_h\|_{DG}$ & Order &\\
\midrule
$1/8$ & 1.07E-05 & - & 7.12E-04 & - & 8.31E-04 & -\\
$1/16$ & 1.36E-06 & 2.98 & 1.80E-04 & 1.98 & 2.05E-04 & 2.00 \\
$1/32$ & 1.71E-07 & 2.99 & 4.53E-05 & 1.99 & 5.10E-05 & 2.00 \\
$1/64$ & 2.15E-08 & 2.99 & 1.14E-05 & 2.00 & 1.27E-05 & 2.00 \\
$1/128$ & 2.69E-09 & 3.00 & 2.84E-06 & 2.00 & 3.17E-06 & 2.00 \\
\midrule
\multicolumn{8}{c}{$P^3$-SIPG} \\
\cline{2-7} 
$h$ & $\|p-p_h\|_{L^2}$ & Order & $\|\nabla (p-p_h)\|_{\mathcal{T}_h}$ & Order & $\|p-p_h\|_{DG}$ & Order &\\
\midrule
$1/4$ & 4.31E-06 & - & 1.76E-04 & - & 1.87E-04 & -\\
$1/8$ & 2.63E-07 & 4.03 & 2.20E-05 & 3.00 & 2.30E-05 & 3.02 \\
$1/16$ & 1.63E-08 & 4.02 & 2.76E-06 & 3.00 & 2.86E-06 & 3.01 \\
$1/32$ & 1.01E-09 & 4.01 & 3.44E-07 & 3.00 & 3.57E-07 &  3.00\\
$1/64$ & 6.29E-11 & 4.00 & 4.31E-08 & 3.00 & 4.46E-08 &  3.00\\
\bottomrule[1.5pt]
\end{tabular}
\caption{
\textbf{\textbf{Example \ref{ex:convergence}}: convergence test.} 
Error table of the case (b) that involves a blocking barrier interface.}
\label{tab:accuracy_barrier}
\end{table}

\end{exmp}

\begin{exmp}\label{ex:single}
\textbf{single fracture}

In this problem, we test the scenario involving a single immersed fracture.
The computational domain is set to $\Omega=(0,1)^2$ and the permeability of the bulk matrix is $\mathbf{K}_m=\mathbf{I}$.
Two cases with different conductivities of the fracture are tested in this example.

In the case (a), we investigate a conductive fracture with the aperture $a=10^{-3}$ and permeability $k_f=10^{8}$.
We consider two different distributions of the fracture: a vertical fracture extending from $(0.5, 0.5)$ to $(0.5, 1)$ and a slanted fracture extending from $(0.25, 0.75)$ to $(0.75, 0.25)$. 
See an illustration of the fractures and the corresponding grids in Figure \ref{fig:single_grid_medium}.
In this case, the top and bottom boundaries are set to Dirichlet conditions with $g_D=1$ and $g_D=0$, respectively, while the left and right boundaries are impermeable, i.e., $g_N=0$. 

In the case (b), we investigate a blocking fracture with the aperture $a=10^{-3}$ and permeability $k_b=10^{-8}$.
The distributions of the fracture and the grids used in computation are the same as in case (a).
In this case, the left and right boundaries are set to Dirichlet conditions with $g_D=0$ and $g_D=1$, respectively, and the top and bottom boundaries are impermeable. 

We perform the computation for both cases on the grids shown in Figure \ref{fig:single_grid_medium}.
{We take the penalty parameters $\alpha_0=\tilde{\alpha}_0=5$ in case (a), and $\alpha_0=10$ in case (b).}
The numerical results are presented in Figure \ref{fig:single_fracture_P1} and Figure \ref{fig:single_barrier_P1} for the cases of conductive fractures and blocking barriers, respectively.
It can be seen from the slices of the pressure that our results match very well with the reference solutions obtained from the box method discrete fracture model (Box-DFM) \cite{xu2024extension} for the interface problem \eqref{eq:PDEmodel} - \eqref{eq:BdrCond} on refined grids.
\begin{figure}[!htbp]
 \centering
 \begin{subfigure}[b]{0.45\textwidth}
  \includegraphics[width=\textwidth]{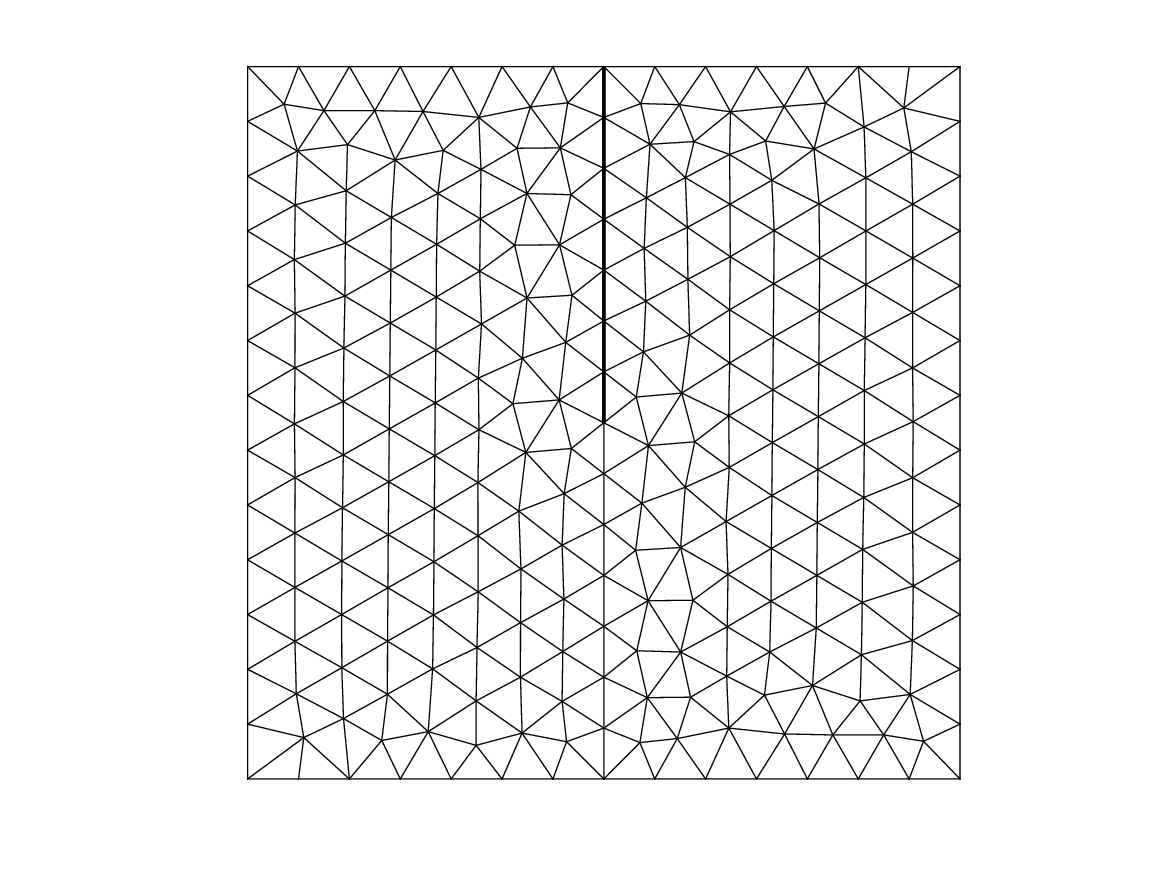}
  \caption{Vertical fracture}
 \end{subfigure}
 \begin{subfigure}[b]{0.45\textwidth}
  \includegraphics[width=\textwidth]{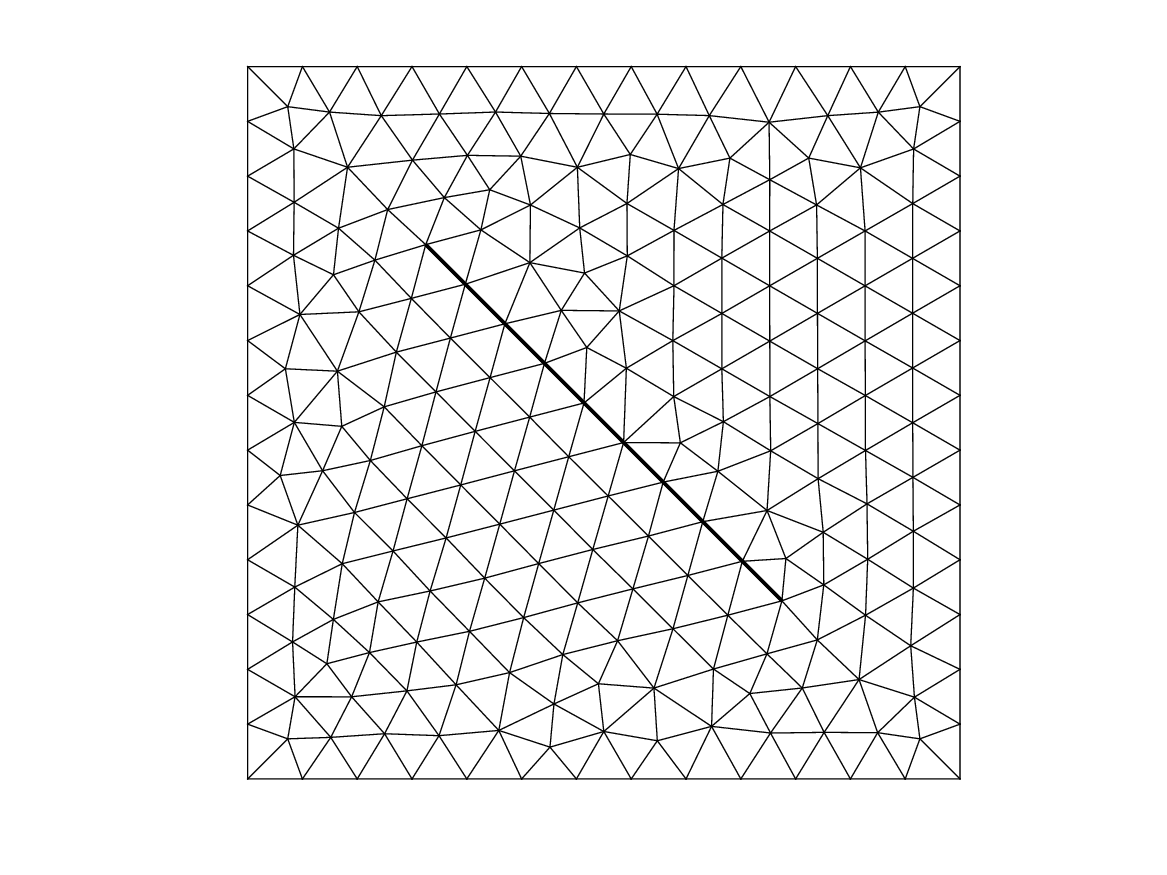}
  \caption{Slanted fracture}
 \end{subfigure}
 \caption{\textbf{Example \ref{ex:single}: single fracture.} 
 The distributions of the fracture (indicated by the black thick line segments) and the grids used in the computation.
 The grid for the vertical fracture contains $450$ cells, and the grid for the slanted fracture contains $404$ cells.}
 \label{fig:single_grid_medium}
\end{figure}

\begin{figure}[!htbp]
 \centering
 \begin{subfigure}[b]{0.45\textwidth}
  \includegraphics[width=\textwidth]{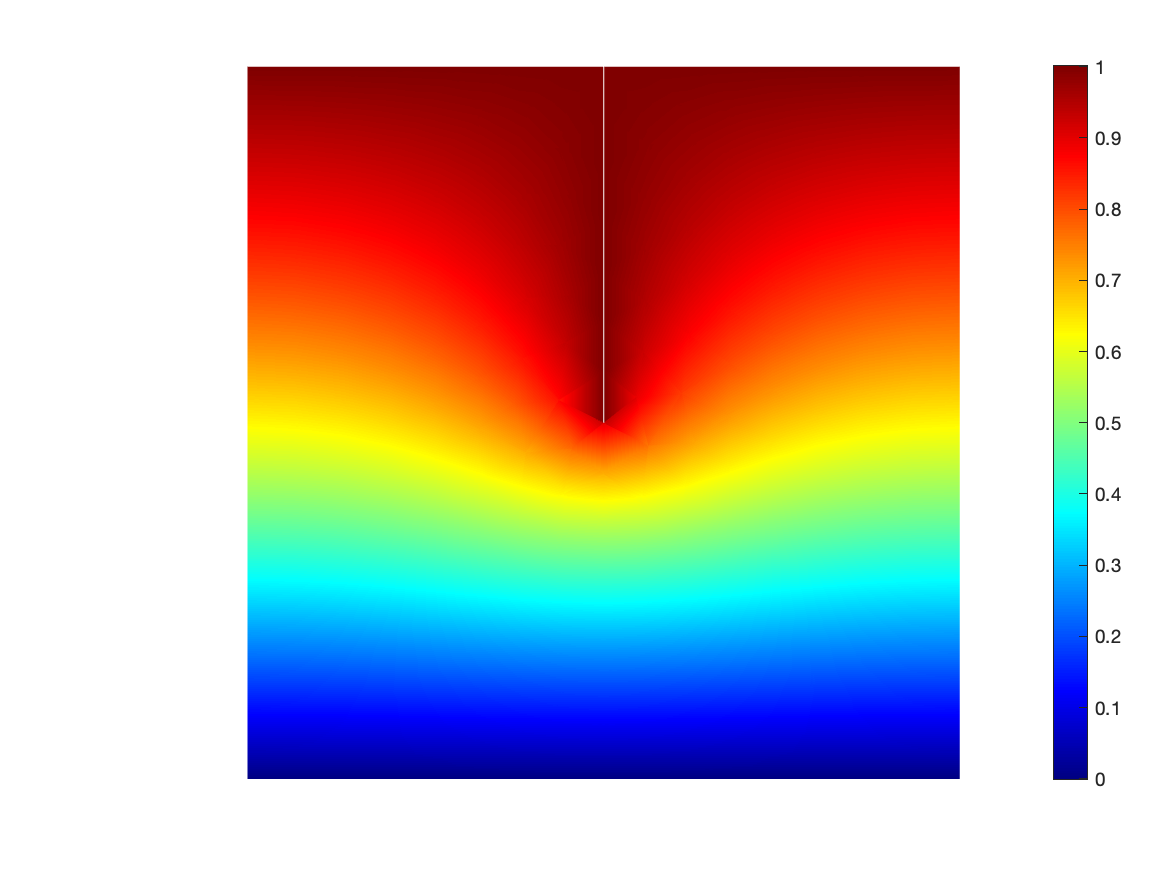}
  \caption{Contour of pressure - $1$}
 \end{subfigure}
 \begin{subfigure}[b]{0.45\textwidth}
  \includegraphics[width=\textwidth]{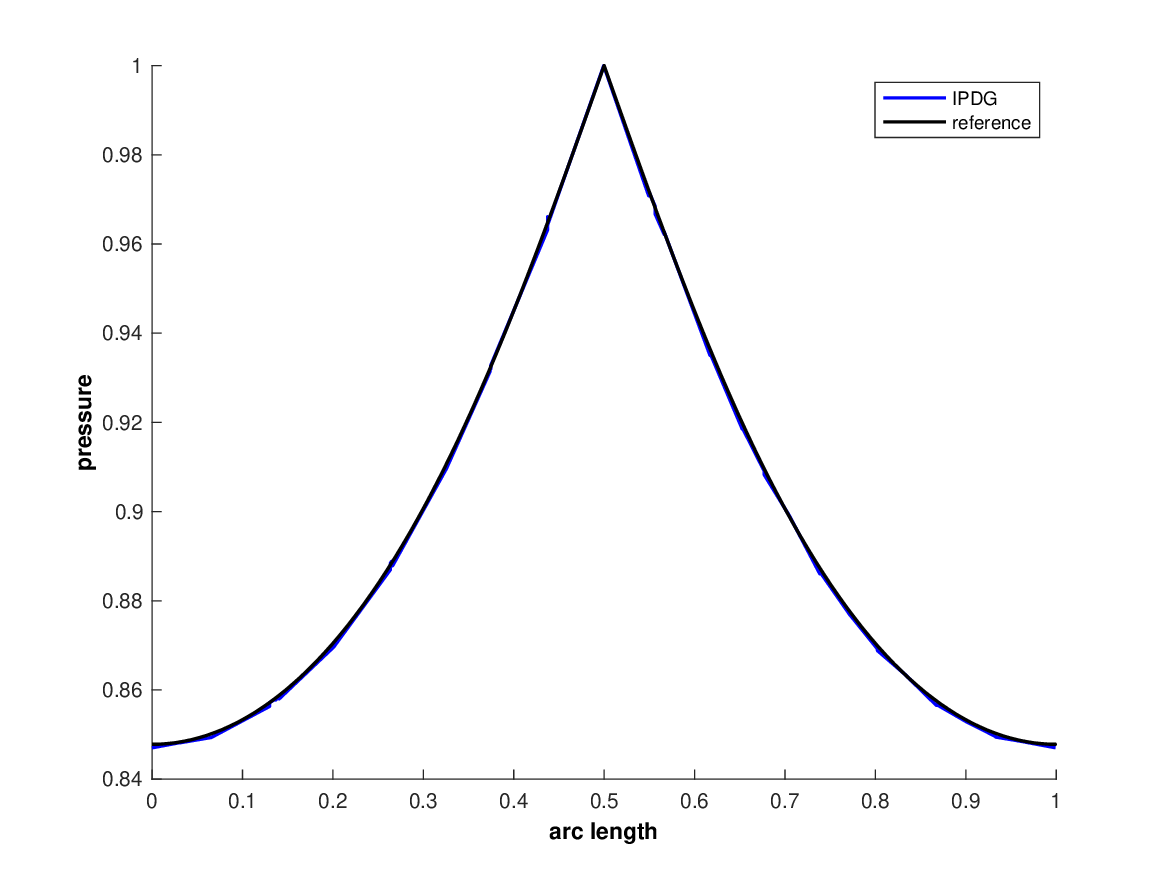}
  \caption{Slice of pressure - $1$}
 \end{subfigure}\\
 \begin{subfigure}[b]{0.45\textwidth}
  \includegraphics[width=\textwidth]{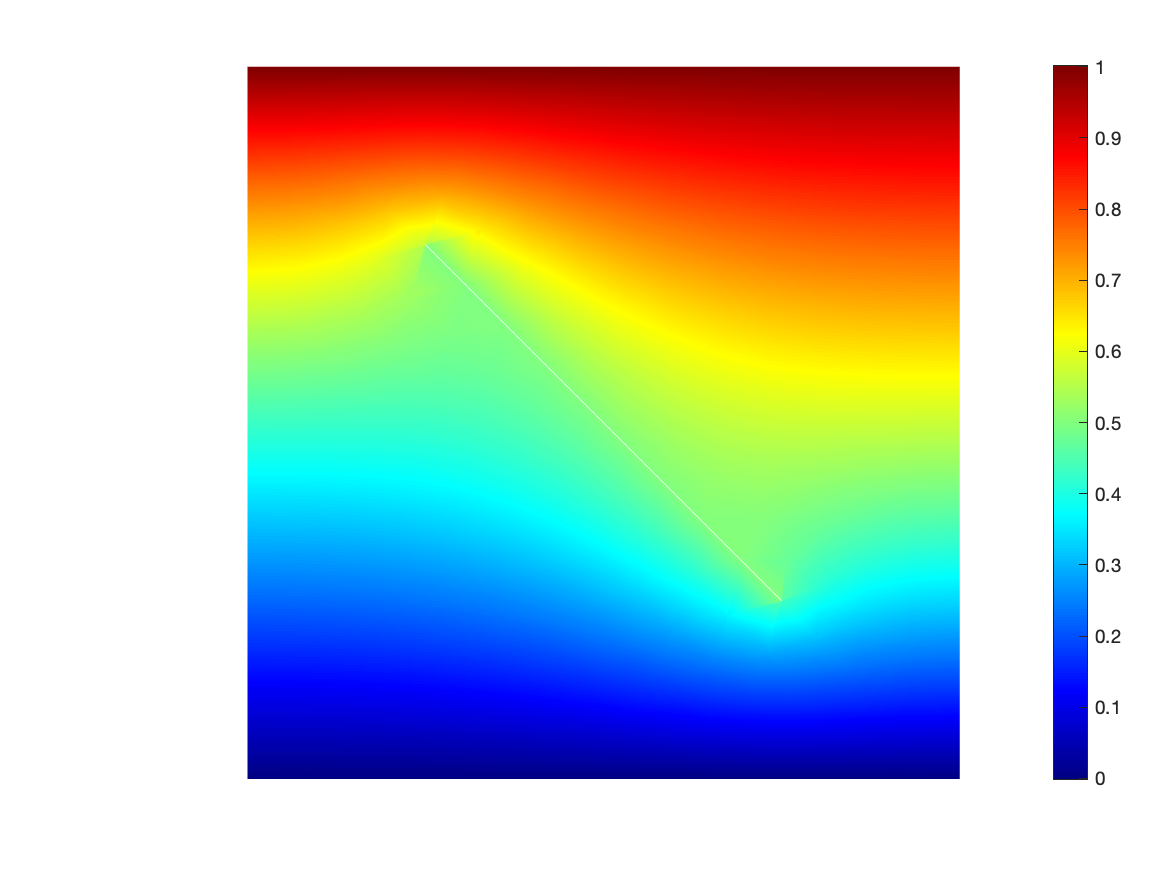}
  \caption{Contour of pressure - $2$}
 \end{subfigure}
 \begin{subfigure}[b]{0.45\textwidth}
  \includegraphics[width=\textwidth]{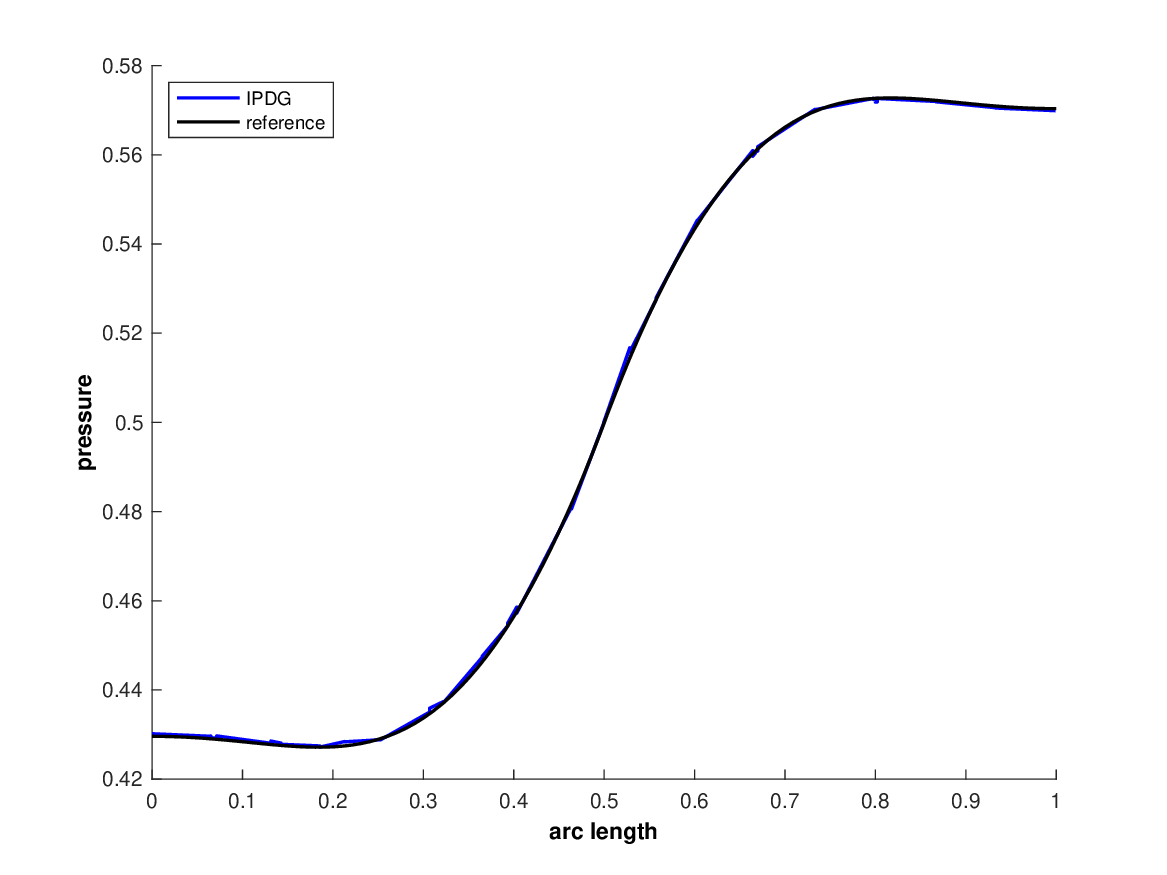}
  \caption{Slice of pressure - $2$}
 \end{subfigure}\\ 
 \caption{\textbf{Example \ref{ex:single}: single fracture.} 
 Results of the $P^1$-SIPG method for the single conductive fracture computed on the grids are shown in Figure \ref{fig:single_grid_medium}.
The slices of pressure in (b) and (d) are taken along $y=0.75$ and $y=0.5$, respectively.
The reference solutions are obtained from the Box-DFM \cite{xu2024extension} with $23, 306$ and $23, 455$ cells for the vertical and slanted fractures, respectively.}
 \label{fig:single_fracture_P1}
\end{figure}

\begin{figure}[!htbp]
 \centering
 \begin{subfigure}[b]{0.45\textwidth}
  \includegraphics[width=\textwidth]{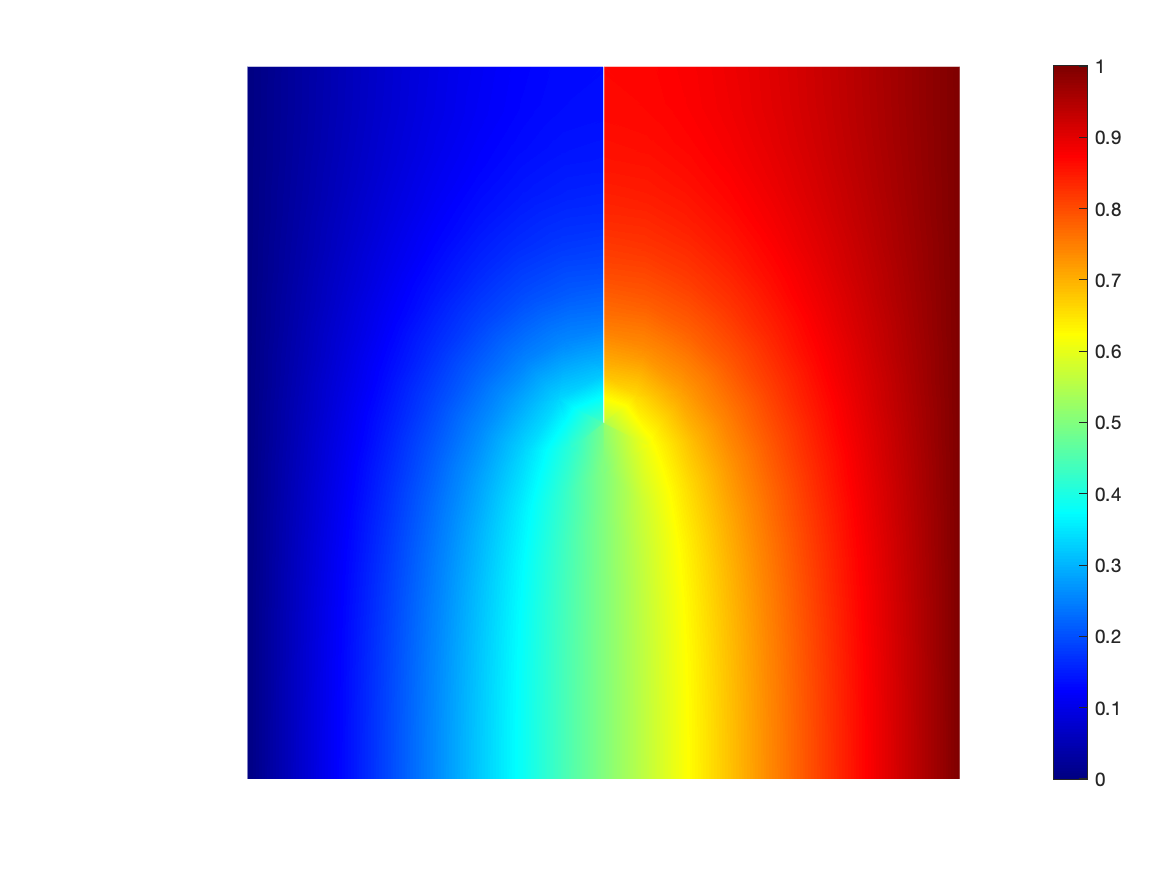}
  \caption{Contour of pressure - $1$}
 \end{subfigure}
 \begin{subfigure}[b]{0.45\textwidth}
  \includegraphics[width=\textwidth]{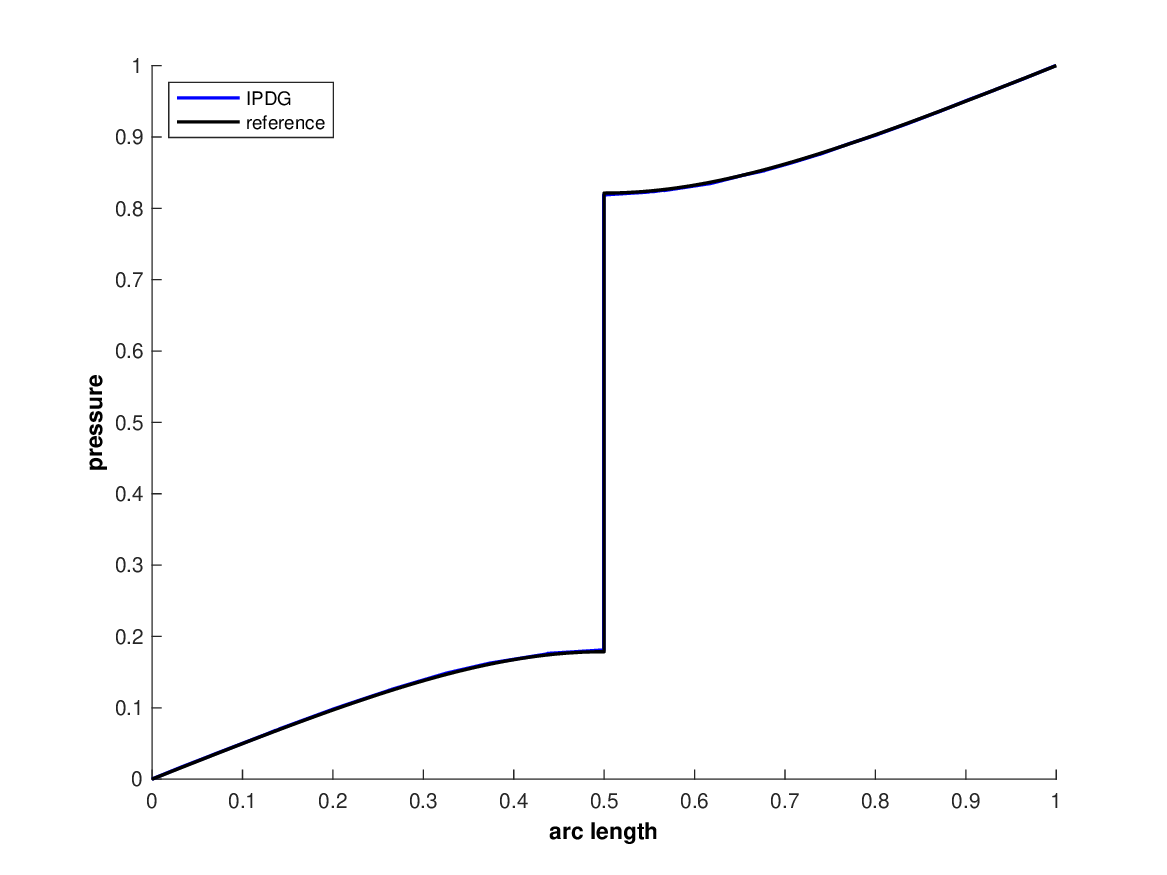}
  \caption{Slice of pressure - $1$}
 \end{subfigure}\\
 \begin{subfigure}[b]{0.45\textwidth}
  \includegraphics[width=\textwidth]{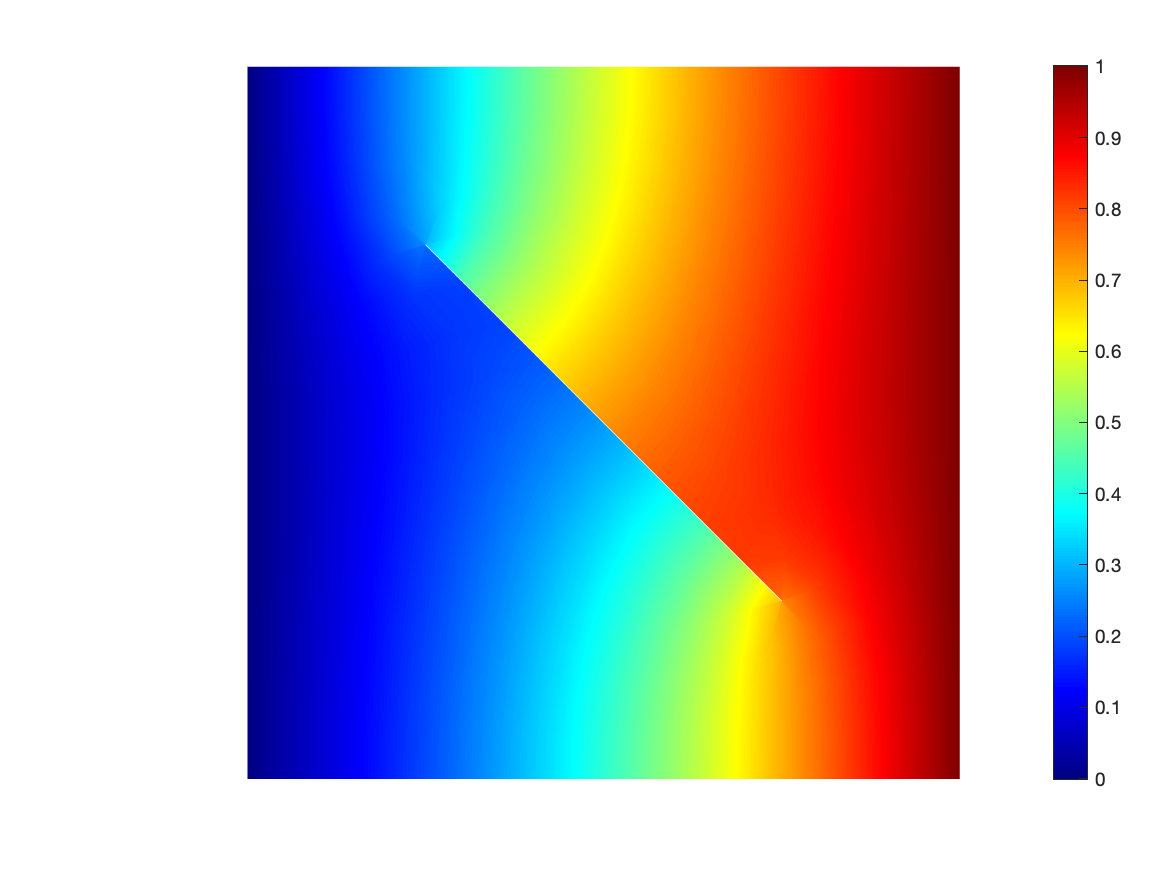}
  \caption{Contour of pressure - $2$}
 \end{subfigure}
 \begin{subfigure}[b]{0.45\textwidth}
  \includegraphics[width=\textwidth]{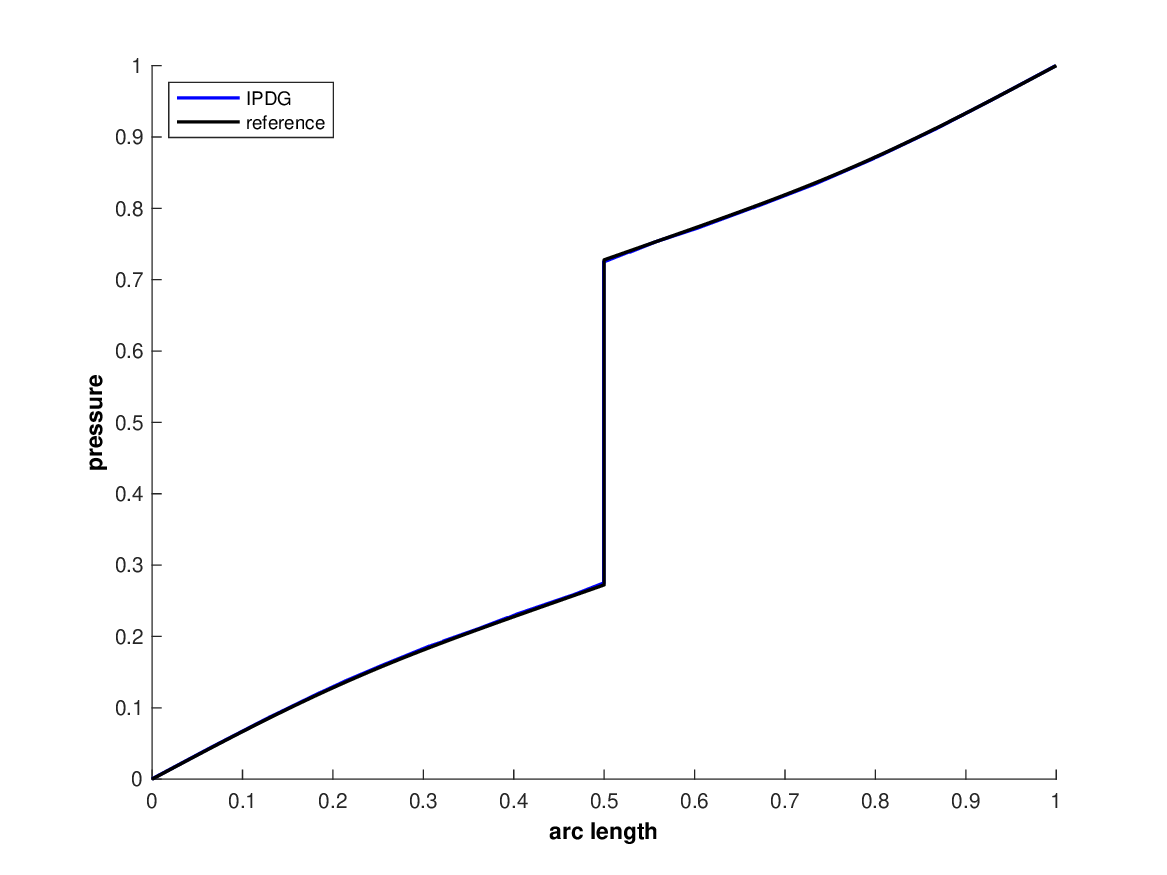}
  \caption{Slice of pressure - $2$}
 \end{subfigure}\\ 
 \caption{\textbf{Example \ref{ex:single}: single fracture.} 
 Results of the $P^1$-SIPG method for the single blocking barrier computed on the grids are shown in Figure \ref{fig:single_grid_medium}.
The slices of pressure in (b) and (d) are taken along $y=0.75$ and $y=0.5$, respectively.
The reference solutions are obtained from the Box-DFM \cite{xu2024extension} with $23, 306$ and $23, 455$ cells for the vertical and slanted barriers, respectively.}
 \label{fig:single_barrier_P1}
\end{figure}
\end{exmp}

\begin{exmp}\label{ex:regular}
\textbf{regular fracture network}

In this example, we test a regular fracture network as studied in \cite{flemisch2018benchmarks}.
The computational domain is set to $\Omega=(0,1)^2$ with impermeable boundary condition on the top and bottom, Neumann boundary condition $g_N=1$ on the left, and Dirichlet boundary condition $g_D=1$ on the right.
The permeability of the bulk matrix is $\mathbf{K}_m=\mathbf{I}$. 
Six fractures with a uniform aperture $a=10^{-4}$ are regularly distributed in the domain, with the exact coordinates detailed in \cite{flemisch2018benchmarks}, as also illustrated in Figure \ref{fig:grid_regular_P1}. 
Two cases are tested. 
In the case (a), the fractures are conductive with a uniform permeability $k_f=10^{4}$.
In the case (b), the fractures are blocking with a uniform permeability $k_b=10^{-4}$.

We perform the computation for both cases on the grid shown in Figure \ref{fig:grid_regular_P1}.
{We take the penalty parameters $\alpha_0=10^{4}, \tilde{\alpha}_0=10$ in case (a), and $\alpha_0=10$ in case (b).}
The numerical results are presented in Figure \ref{fig:regular_fractures_P1} and Figure \ref{fig:regular_barriers_P1} for the cases of conductive fractures and blocking barriers, respectively.
It can be observed from the figures that our numerical results exhibit excellent agreement with the reference solutions provided by the authors of \cite{flemisch2018benchmarks}.

\begin{figure}[!hbpt]
 \centering
 \includegraphics[width=0.45\textwidth]{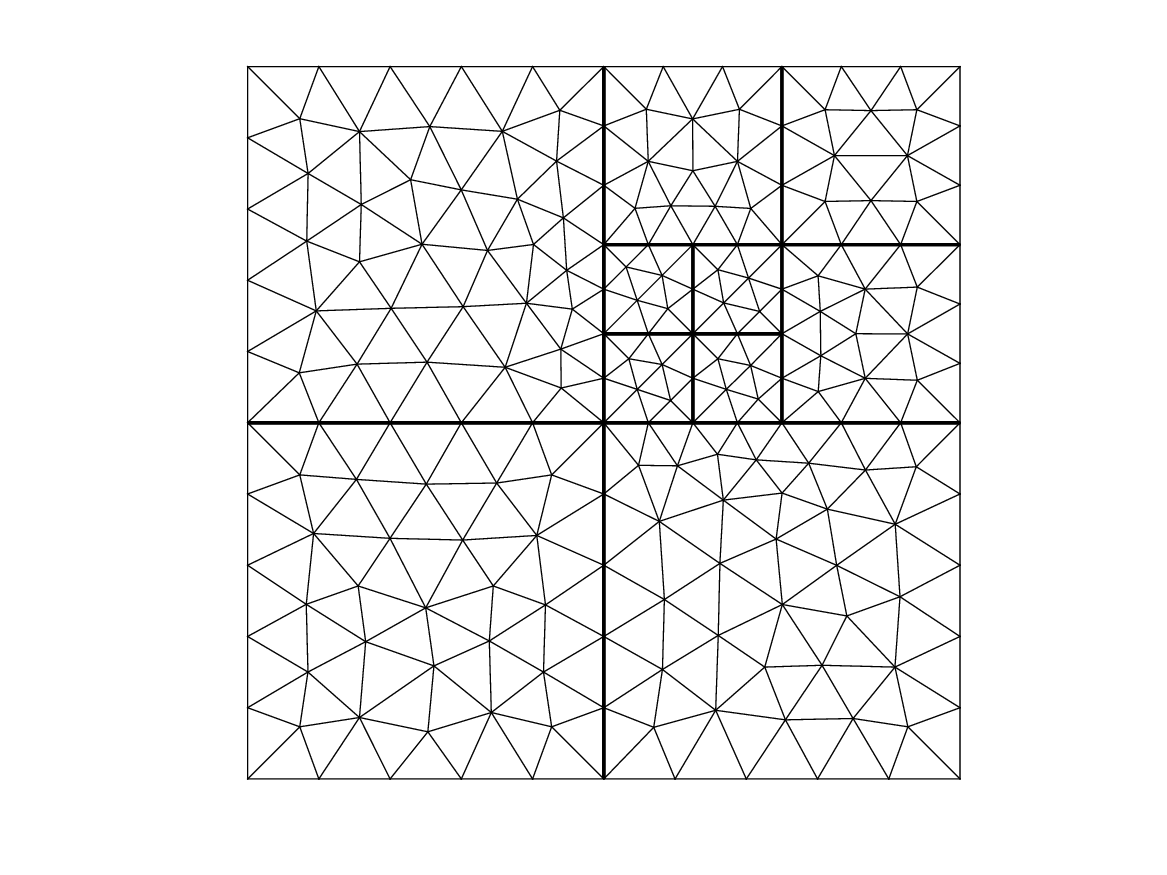}
  \setlength{\abovecaptionskip}{2pt} 
 \caption{\textbf{Example \ref{ex:regular}: regular fracture network.} The distribution of fractures (indicated by the black thick line segments) and the grid containing $366$ cells used in the computation.}
 \label{fig:grid_regular_P1}
\end{figure}

\begin{figure}[!htbp]
 \centering
 \begin{subfigure}[b]{0.45\textwidth}
  \includegraphics[width=\textwidth]{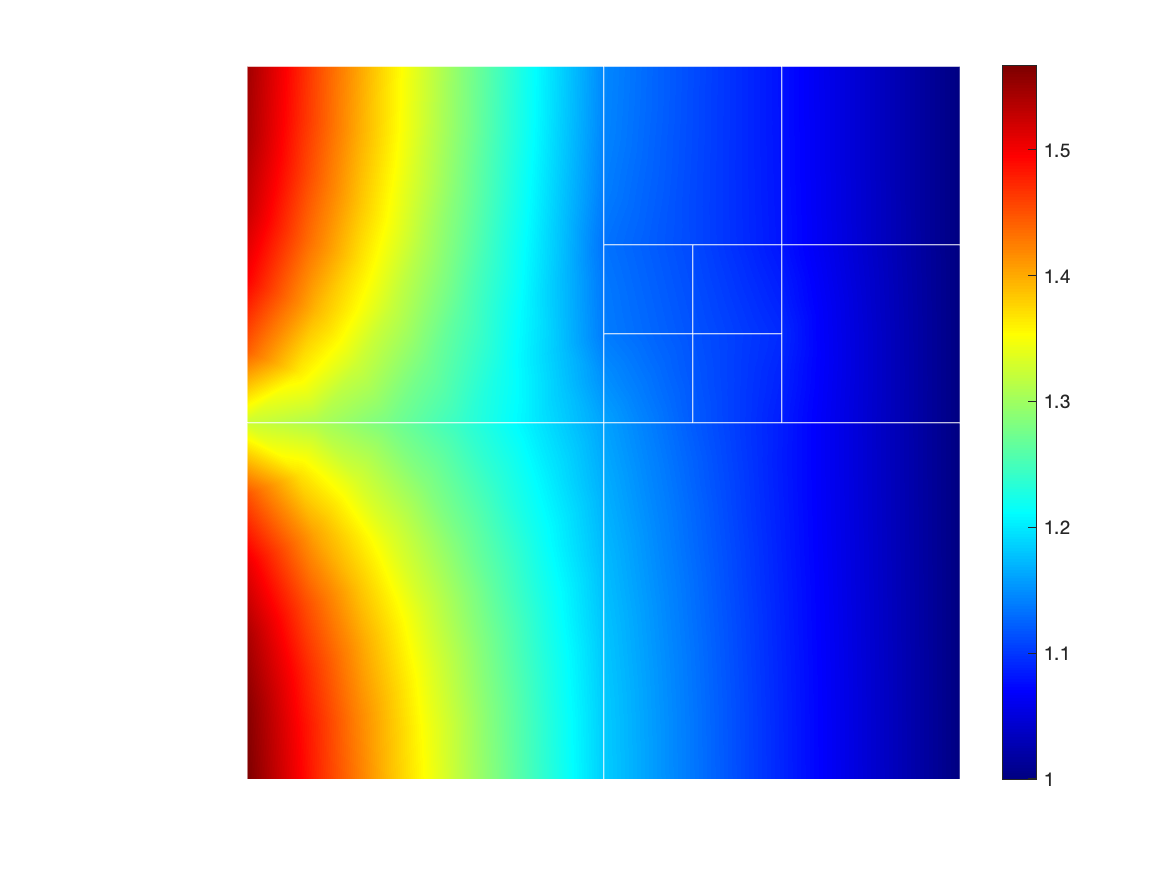}
  \caption{Contour of pressure}
 \end{subfigure}
 \begin{subfigure}[b]{0.45\textwidth}
  \includegraphics[width=\textwidth]{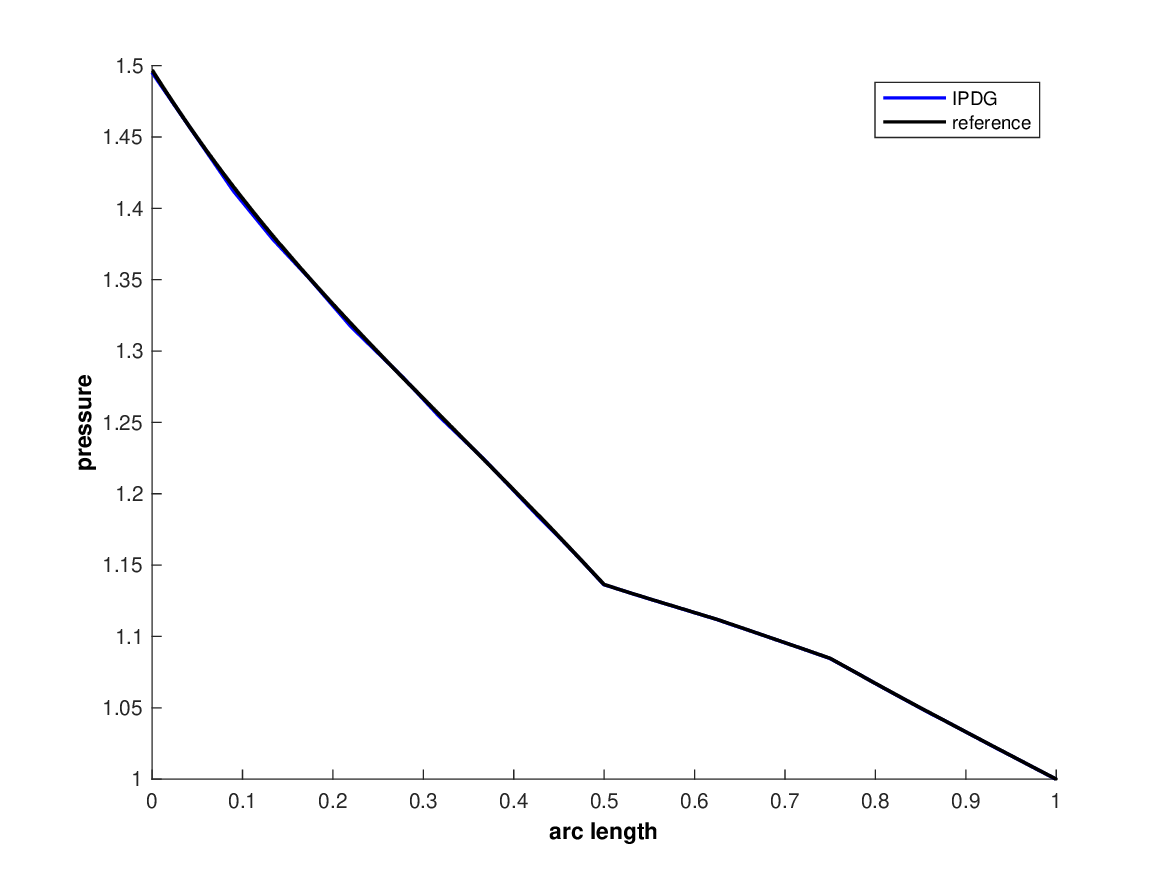}
  \caption{Slice of pressure - $1$}
 \end{subfigure}
 \begin{subfigure}[b]{0.45\textwidth}
  \includegraphics[width=\textwidth]{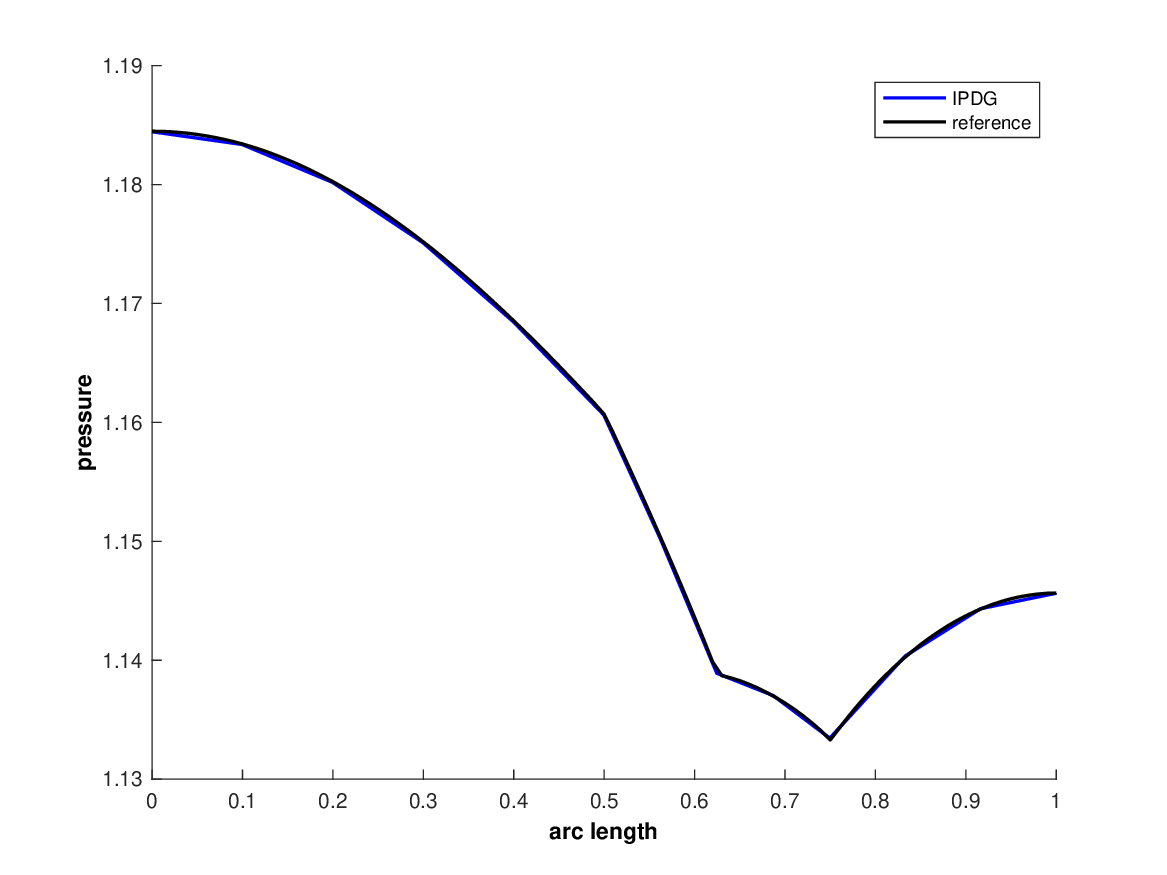}
  \caption{Slice of pressure - $2$}
 \end{subfigure}
 \caption{\textbf{Example \ref{ex:regular}: regular fracture network.} 
 Results of the $P^1$-SIPG method for conductive fractures computed on the grid are shown in Figure \ref{fig:grid_regular_P1}.
 The slices of pressure in (b) and (c) are taken along $y=0.7$ and $x=0.5$, respectively.
 The reference solution, provided by the authors of \cite{flemisch2018benchmarks}, is obtained from the mimetic finite difference method with $1, 136, 456$ cells for the equi-dimensional model.}
 \label{fig:regular_fractures_P1}
\end{figure}

\begin{figure}[!htbp]
 \centering
 \begin{subfigure}[b]{0.45\textwidth}
  \includegraphics[width=\textwidth]{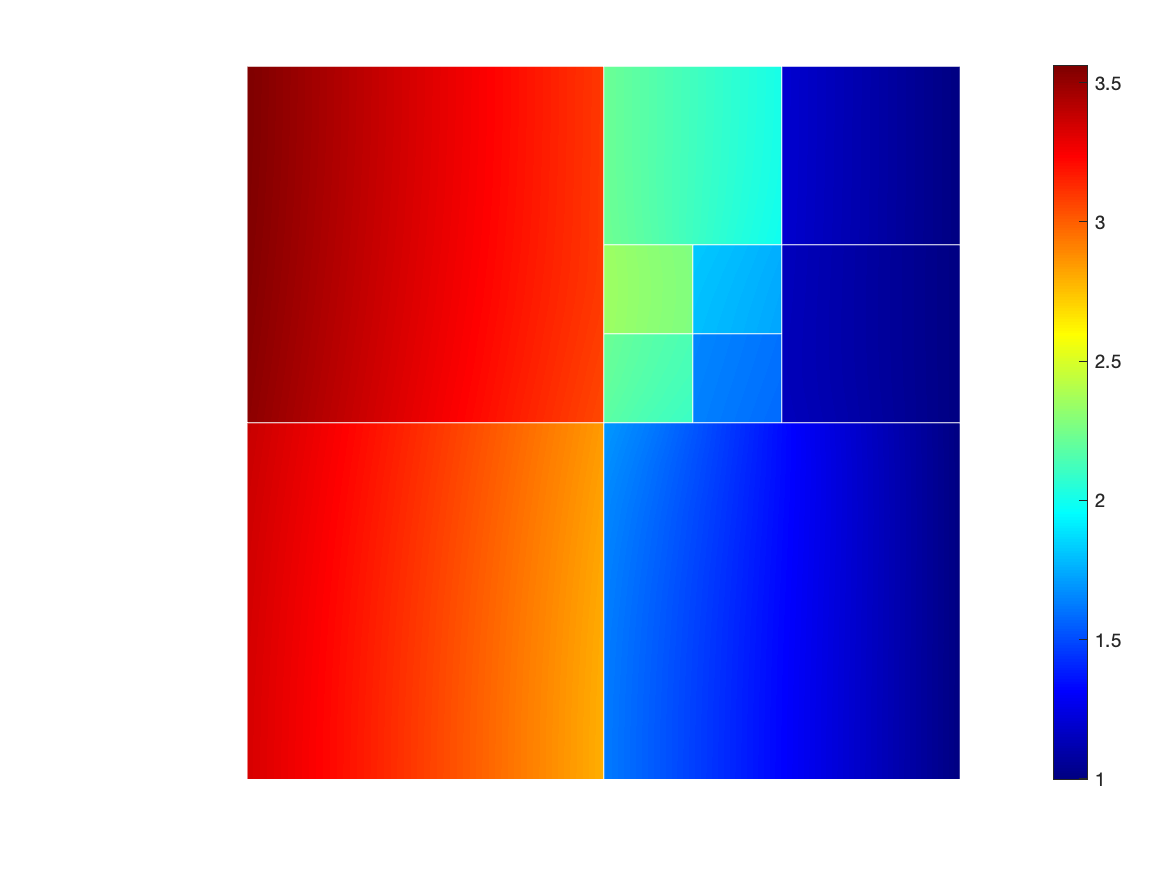}
  \caption{Contour of pressure}
 \end{subfigure}
 \begin{subfigure}[b]{0.45\textwidth}
  \includegraphics[width=\textwidth]{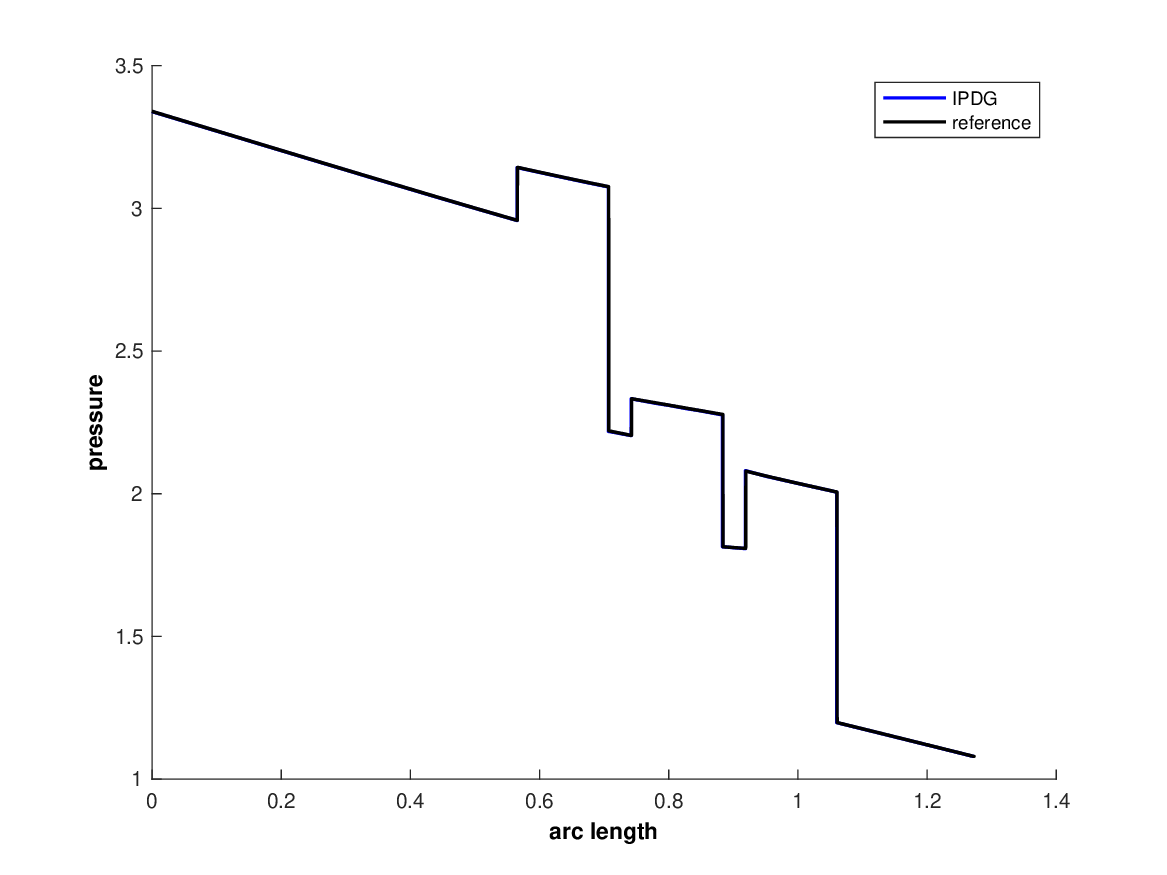}
  \caption{Slice of pressure}
 \end{subfigure}
 \caption{\textbf{Example \ref{ex:regular}: regular fracture network.} 
 Results of the $P^1$-SIPG method for blocking barriers computed on the grid are shown in Figure \ref{fig:grid_regular_P1}.
 The slice of pressure in (b) is taken along the line (0.0, 0.1) - (0.9, 1.0).
 The reference solution, provided by the authors of \cite{flemisch2018benchmarks}, is obtained from the mimetic finite difference method with $1, 136, 456$ cells for the equi-dimensional model.}
 \label{fig:regular_barriers_P1}
\end{figure}
    
\end{exmp}

\begin{exmp}\label{ex:complex}
\textbf{complex fracture network}

In this example, we test a complex fracture network where conductive fractures and blocking barriers coexist and intersect.
The computational domain is set to $\Omega=(0,1)^2$.
The fracture network in the domain contains $8$ conductive fractures and $2$ blocking fractures with a uniform aperture $a=10^{-4}$.
The permeabilities of the bulk matrix, conductive fractures, and blocking barriers are set to $\mathbf{K}_m=\mathbf{I}$, $k_f=10^{4}$ and $k_b=10^{-4}$, respectively.
The exact coordinates of the fractures were given in appendix C of \cite{flemisch2018benchmarks}.
See an illustration of the distribution of the fracture network and the computational grid in Figure \ref{fig:grid_complex_P1}.

Two cases are considered in the test.
In case (a), we investigate a predominantly vertical flow driven by the Dirichlet conditions $g_D=4$ and $g_D=1$ on the top and bottom boundaries, respectively, and the impermeable condition on the left and right boundaries.
In the case (b), we investigate a predominantly horizontal flow driven by the Dirichlet boundary conditions $g_D=4$ and $g_D=1$ on the left and right boundaries, respectively, and the impermeable condition on the top and bottom boundaries.

When handling the intersections of a fracture and a barrier, we prioritize the interface condition of the barrier at these intersections. 
Specifically, we assume that the fracture is divided into two separate, non-communicating parts by the barrier, allowing for discontinuous pressure across the barrier. 
This treatment aims to approximate the adoption of a harmonic average of the permeabilities of the fracture and barrier for the intersection cell, as used in the methods described in \cite{flemisch2018benchmarks}. 
In other situations, one may choose to honor the interface condition of the fracture at the intersections if the conductive fracture dominates.

We perform the computation for both cases on the grid shown in Figure \ref{fig:grid_complex_P1}. 
{We take the penalty parameters $\alpha_0=\tilde{\alpha}_0=10$ in both cases.}
The numerical results are presented in Figure \ref{fig:complex_t2b_P1} and Figure \ref{fig:complex_l2r_P1} for the cases of vertical and horizontal flows, respectively.
One can observe an excellent agreement of our results with the reference solutions provided by the authors of \cite{flemisch2018benchmarks}.

\begin{figure}[!hbpt]
 \centering
 \includegraphics[width=0.45\textwidth]{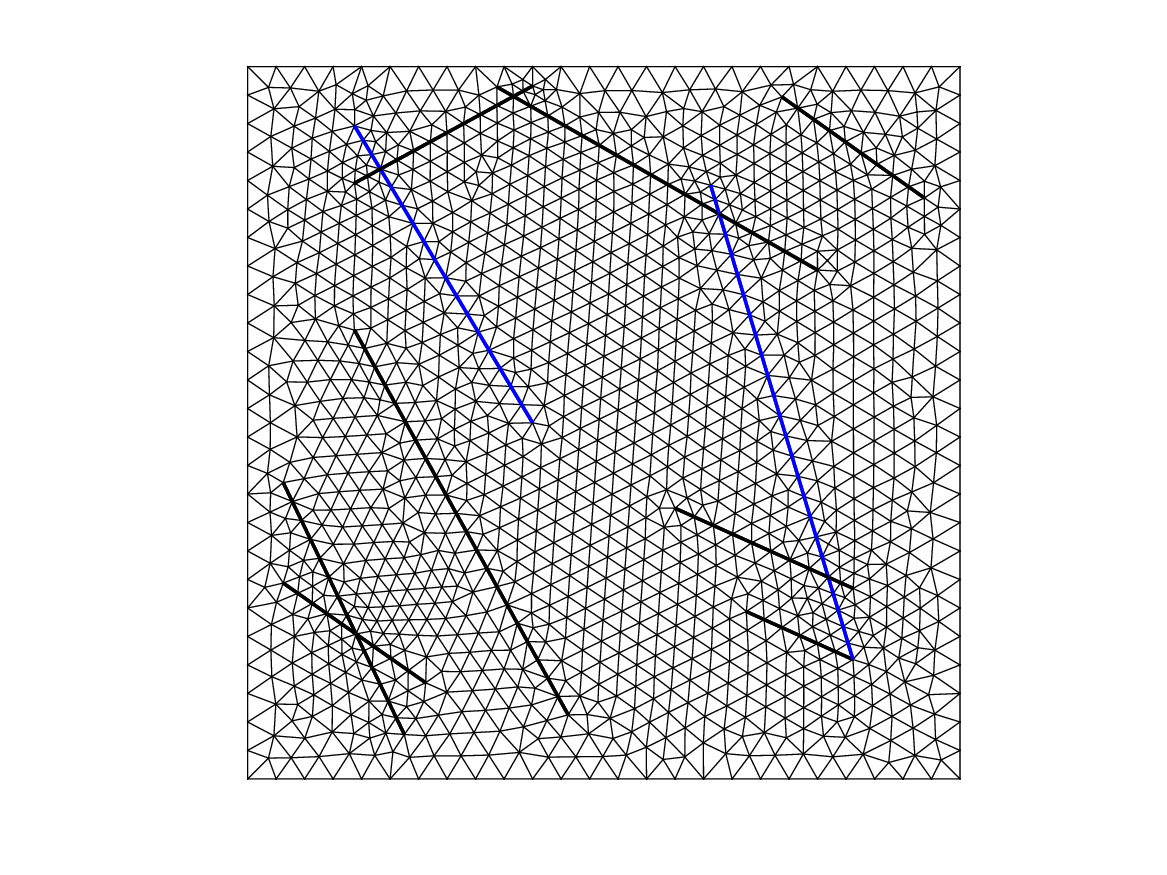}
  \setlength{\abovecaptionskip}{2pt} 
 \caption{\textbf{Example \ref{ex:complex}: complex fracture network.} 
 The distribution of fractures (indicated by the black and blue thick line segments for the conductive and blocking fractures, respectively) and the grid containing $2, 680$ cells used in the computation.}
 \label{fig:grid_complex_P1}
\end{figure}

\begin{figure}[!htbp]
 \centering
 \begin{subfigure}[b]{0.45\textwidth}
  \includegraphics[width=\textwidth]{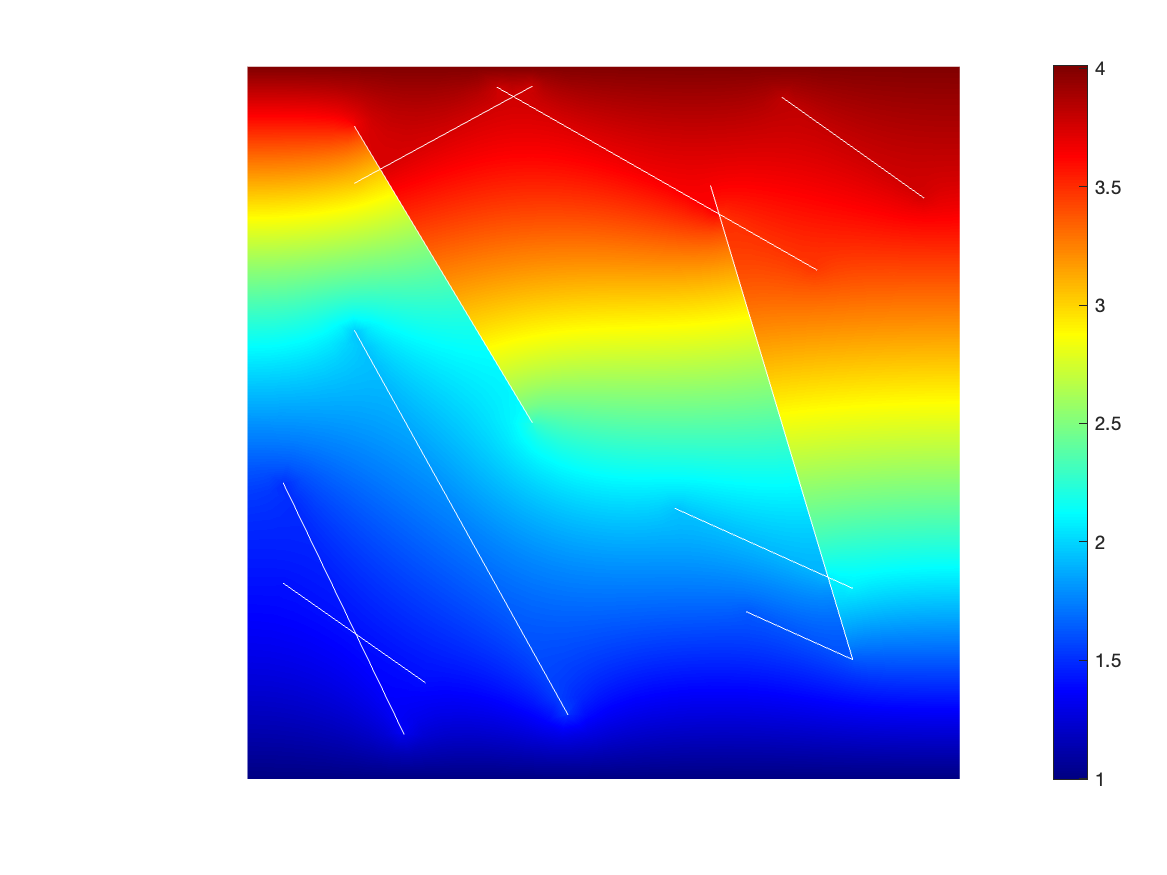}
  \caption{Contour of pressure}
 \end{subfigure}
 \begin{subfigure}[b]{0.45\textwidth}
  \includegraphics[width=\textwidth]{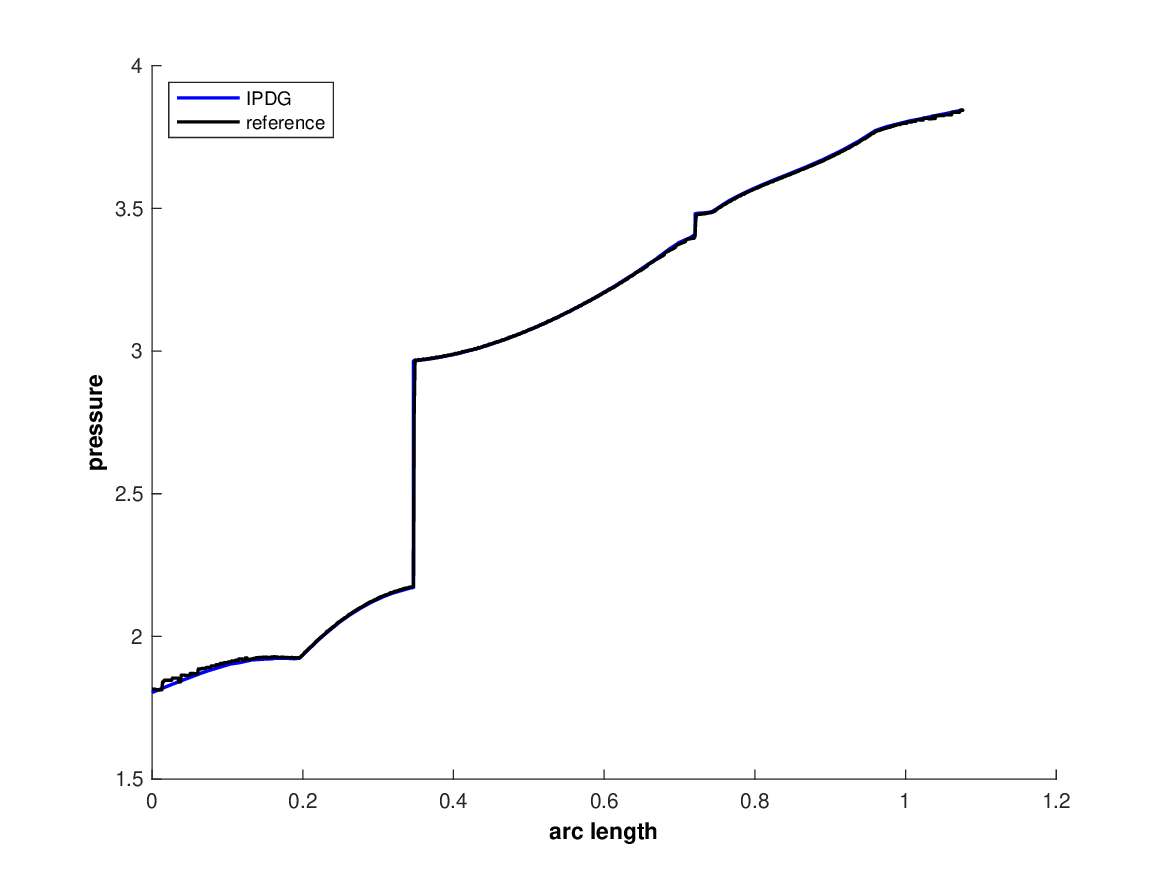}
  \caption{Slice of pressure}
 \end{subfigure}
 \caption{\textbf{Example \ref{ex:complex}: complex fracture network.} 
 Results of the $P^1$-SIPG method for the predominately vertical flow computed on the grid are shown in Figure \ref{fig:grid_complex_P1}.
 The slice of pressure in (b) is taken along the line (0.0, 0.5) - (1.0, 0.9).
 The reference solution, provided by the authors of \cite{flemisch2018benchmarks}, is obtained from the mimetic finite difference method with $2,260,352$ cells for the equi-dimensional model.}
 \label{fig:complex_t2b_P1}
\end{figure}

\begin{figure}[!htbp]
 \centering
 \begin{subfigure}[b]{0.45\textwidth}
  \includegraphics[width=\textwidth]{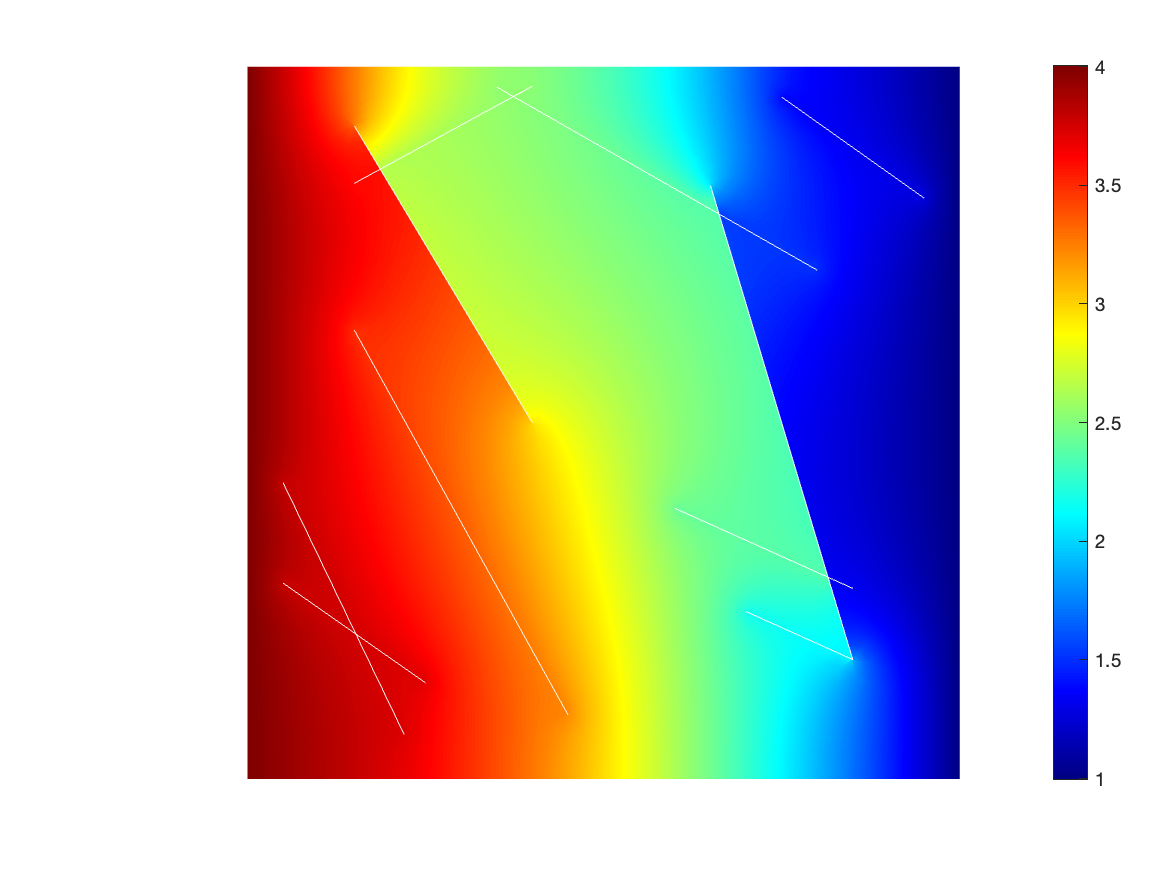}
  \caption{Contour of pressure}
 \end{subfigure}
 \begin{subfigure}[b]{0.45\textwidth}
  \includegraphics[width=\textwidth]{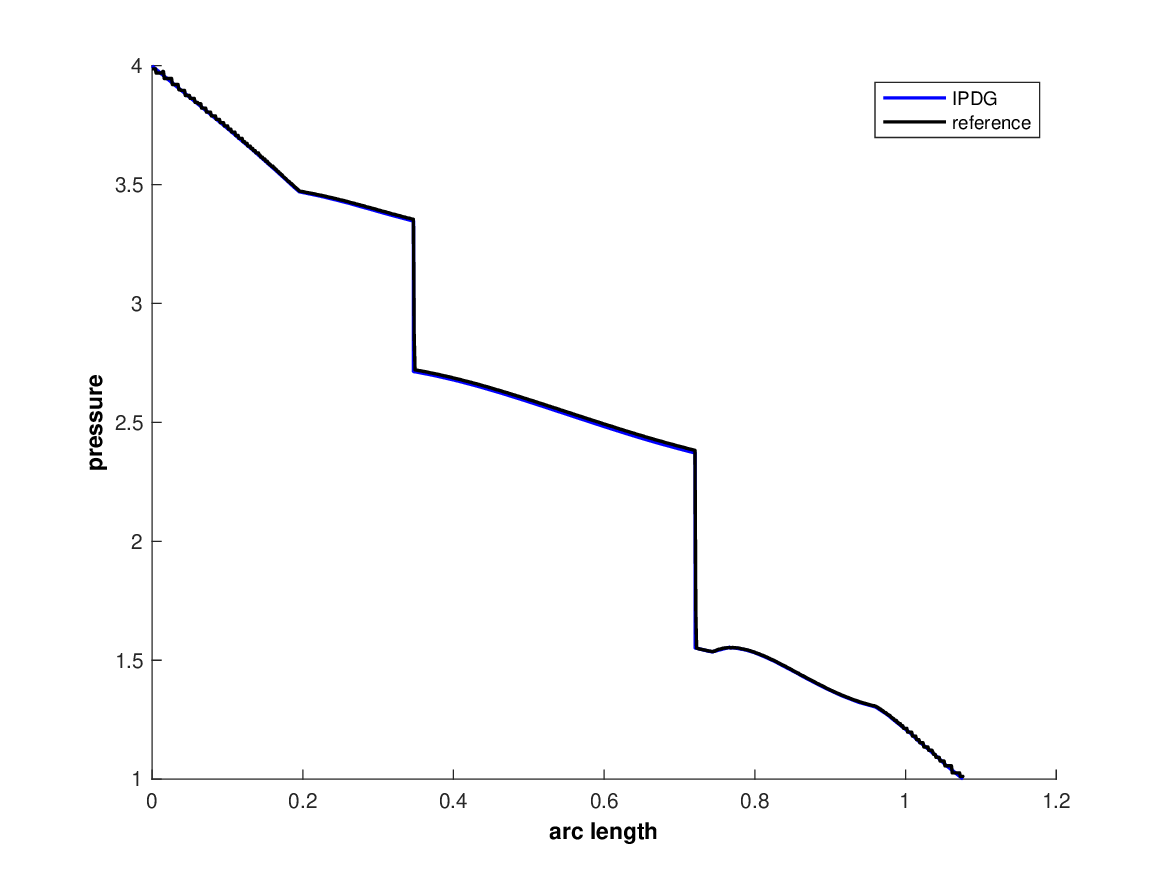}
  \caption{Slice of pressure}
 \end{subfigure}
 \caption{\textbf{Example \ref{ex:complex}: complex fracture network.} 
 Results of the $P^1$-SIPG method for the predominately horizontal flow computed on the grid are shown in Figure \ref{fig:grid_complex_P1}.
 The slice of pressure in (b) is taken along the line (0.0, 0.5) - (1.0, 0.9).
 The reference solution, provided by the authors of \cite{flemisch2018benchmarks}, is obtained from the mimetic finite difference method with $2,260,352$ cells for the equi-dimensional model.}
 \label{fig:complex_l2r_P1}
\end{figure}
\end{exmp}

\begin{exmp}\label{ex:real}
\textbf{realistic case}

In this example, we test a realistic case of a fracture network containing $64$ fractures.
The original setup with conductive fractures was given in \cite{flemisch2018benchmarks}, and its variant featuring blocking barriers was investigated in \cite{glaser2022comparison}.

In the case (a), all fractures are conductive with a uniform permeability $k_f=10^{-8}$ and aperture $a=10^{-2}$.
In the case (b), all fractures are blocking with a uniform permeability $k_b=10^{-18}$ and aperture $a=10^{-2}$.
In both cases, the computational domain is $\Omega=(0, 700)\times(0, 600)$ with the permeability of bulk matrix given as $\mathbf{K}_m=10^{-14}\mathbf{I}$.
The exact coordinates of the fracture network are available in the data repository shared in \cite{flemisch2018benchmarks}.
See an illustration of the distribution of the fractures and the computational grid in Figure \ref{fig:grid_real_P1}.
The flow in the domain is driven by Dirichlet conditions $g_D=1, 013, 250$ and $g_D=0$ on the left and right boundaries, respectively, and the top and bottom boundaries are impermeable.

We perform the computation for both cases on the grid shown in Figure \ref{fig:grid_real_P1}. 
{We take the penalty parameters $\alpha_0=\tilde{\alpha}_0=10^{-5}$ in case (a), and $\alpha_0=10^{-4}$ in case (b).}
The numerical results are presented in Figure \ref{fig:real_fractures_P1} and Figure \ref{fig:real_barriers_P1} for the cases of conductive and blocking fractures, respectively.
Unfortunately, there is no reference solutions available in the literature due to the complication of geometry.
Therefore, we compare our results on slices with those obtained from different DFMs in \cite{flemisch2018benchmarks, glaser2022comparison, xu2024extension}.
It can be observed from the comparison that our results don't have noticeable deviations from the solution range of other numerical methods.

\begin{figure}[!hbpt]
 \centering
 \includegraphics[width=0.45\textwidth]{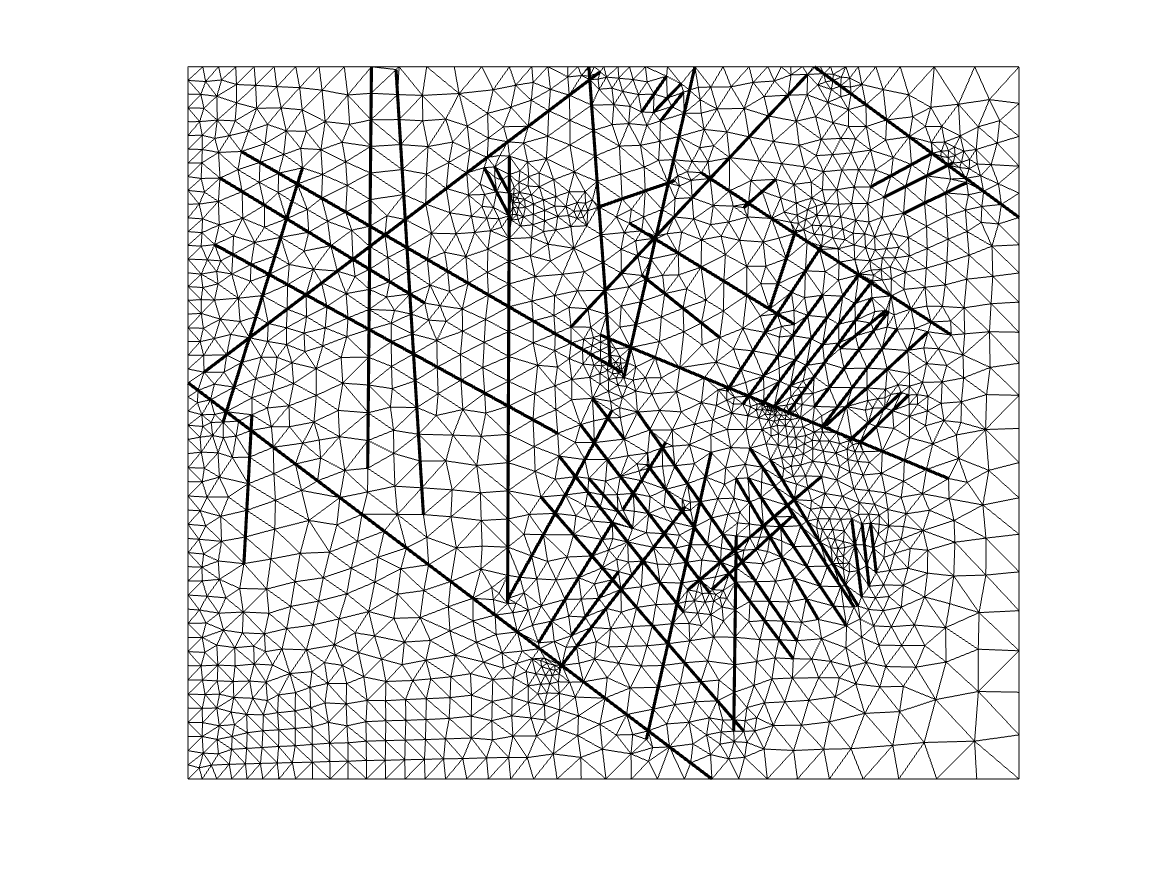}
  \setlength{\abovecaptionskip}{2pt} 
 \caption{\textbf{Example \ref{ex:real}: realistic case.} 
 The distribution of fractures (indicated by the black thick line segments) and the grid containing $3, 611$ cells used in the computation.}
 \label{fig:grid_real_P1}
\end{figure}

\begin{figure}[!htbp]
 \centering
 \begin{subfigure}[b]{0.45\textwidth}
  \includegraphics[width=\textwidth]{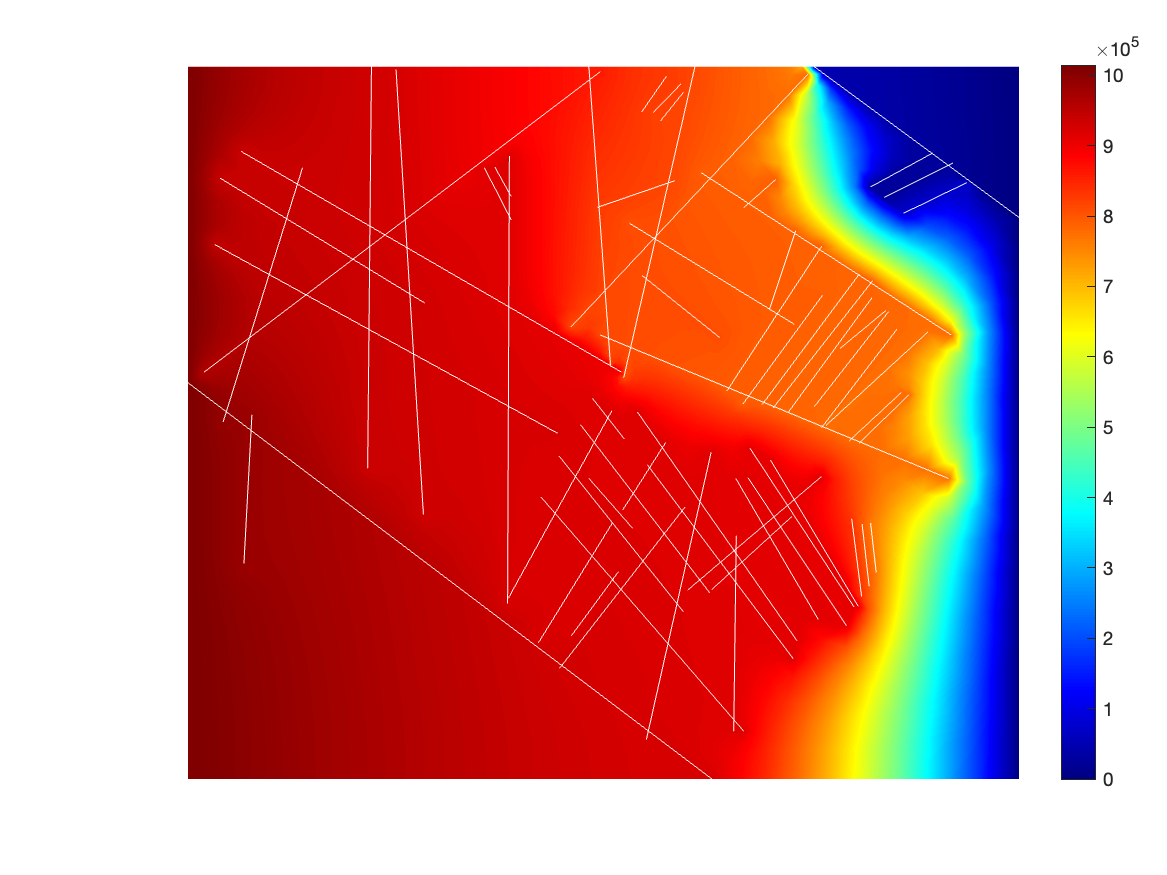}
  \caption{Contour of pressure}
 \end{subfigure}
 \begin{subfigure}[b]{0.45\textwidth}
  \includegraphics[width=\textwidth]{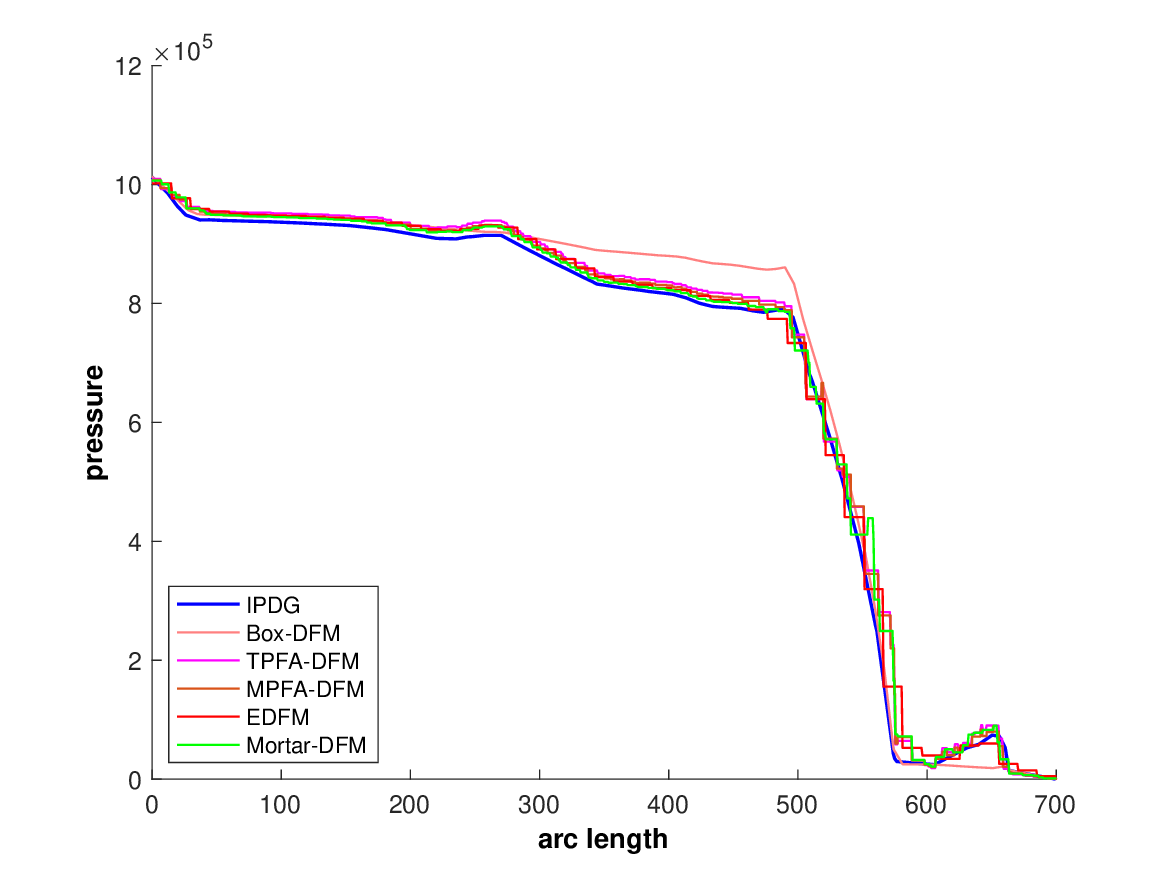}
  \caption{Slice of pressure - $1$}
 \end{subfigure}
 \begin{subfigure}[b]{0.45\textwidth}
  \includegraphics[width=\textwidth]{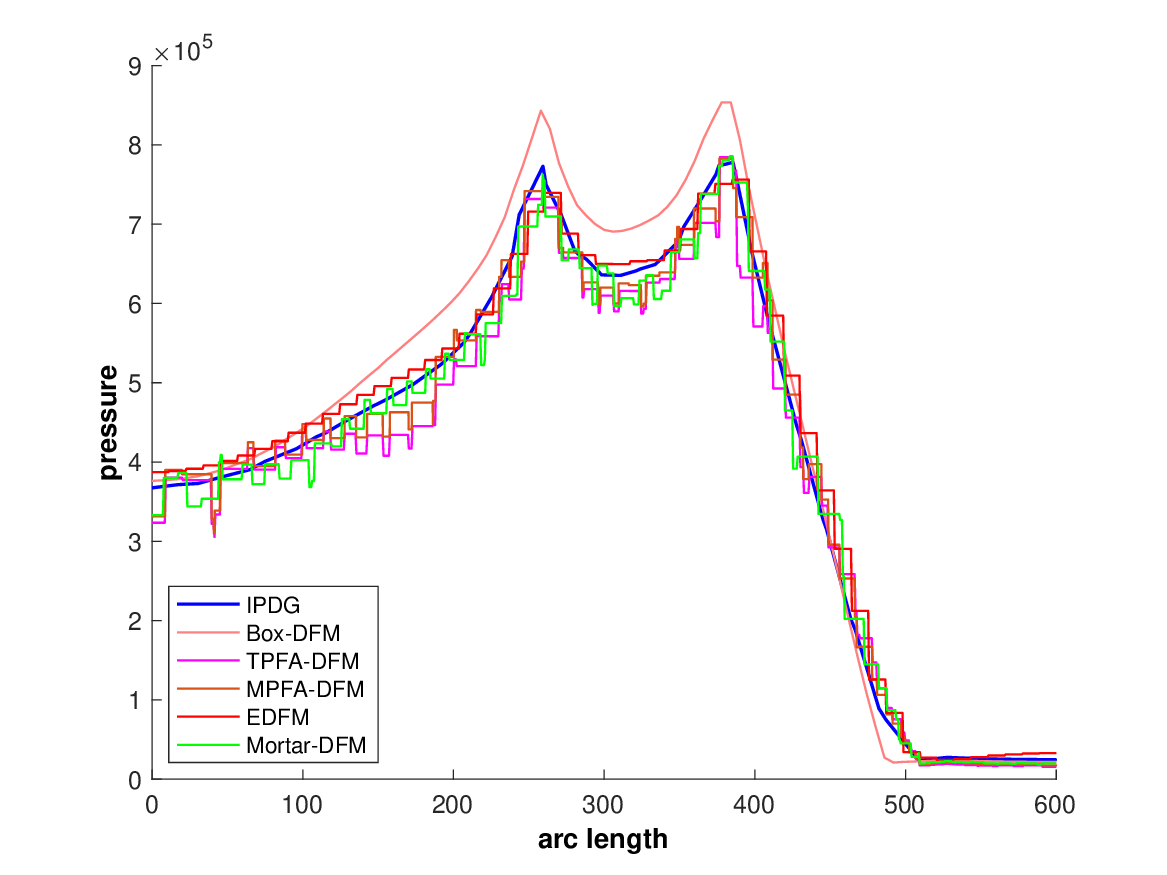}
  \caption{Slice of pressure - $2$}
 \end{subfigure}
 \caption{\textbf{Example \ref{ex:real}: realistic case.} 
 Results of the $P^1$-SIPG method for conductive fractures computed on the grid are shown in Figure \ref{fig:grid_real_P1}.
 The slices of pressure in (b) and (c) are taken along $y = 500$ and $x = 625$, respectively.
 The numerical methods that participate in the comparison, provided by the authors of \cite{flemisch2018benchmarks}, are the box method discrete fracture model (Box-DFM), cell-centered control volume discrete fracture model with two-point flux approximation (TPFA-DFM) and multi-point flux approximation (MPFA-DFM), embedded discrete fracture model (EDFM), and flux-mortar discrete fracture model (Mortar-DFM).}
 \label{fig:real_fractures_P1}
\end{figure}

\begin{figure}[!htbp]
 \centering
 \begin{subfigure}[b]{0.45\textwidth}
  \includegraphics[width=\textwidth]{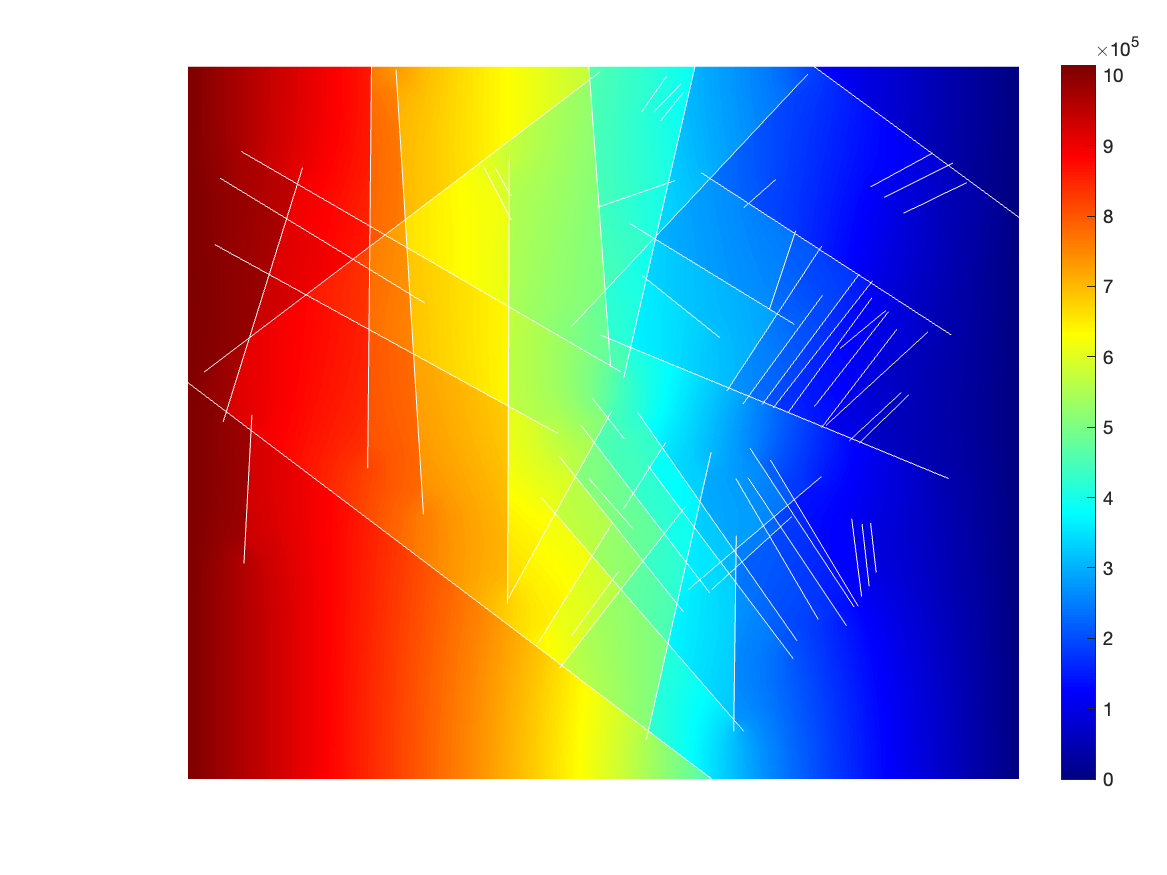}
  \caption{Contour of pressure}
 \end{subfigure}
 \begin{subfigure}[b]{0.45\textwidth}
  \includegraphics[width=\textwidth]{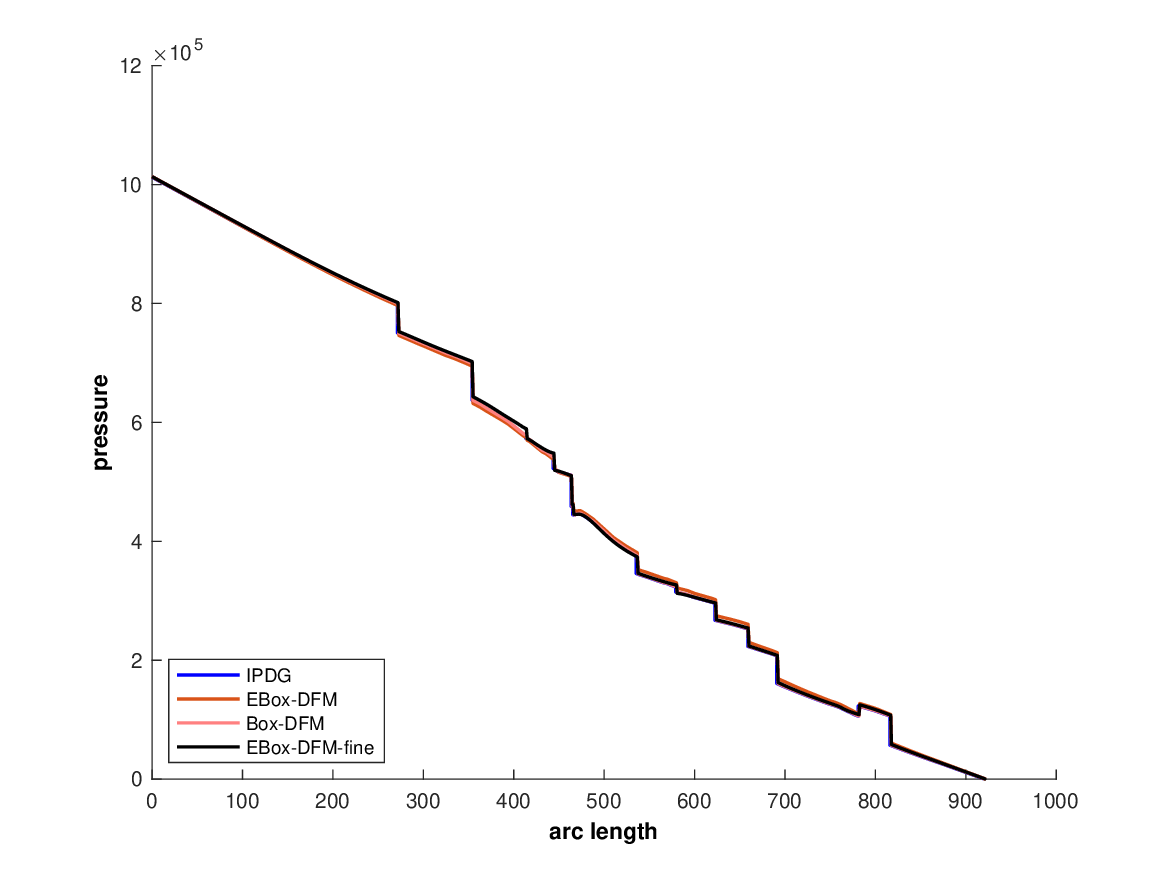}
  \caption{Slice of pressure - $1$}
 \end{subfigure}
 \begin{subfigure}[b]{0.45\textwidth}
  \includegraphics[width=\textwidth]{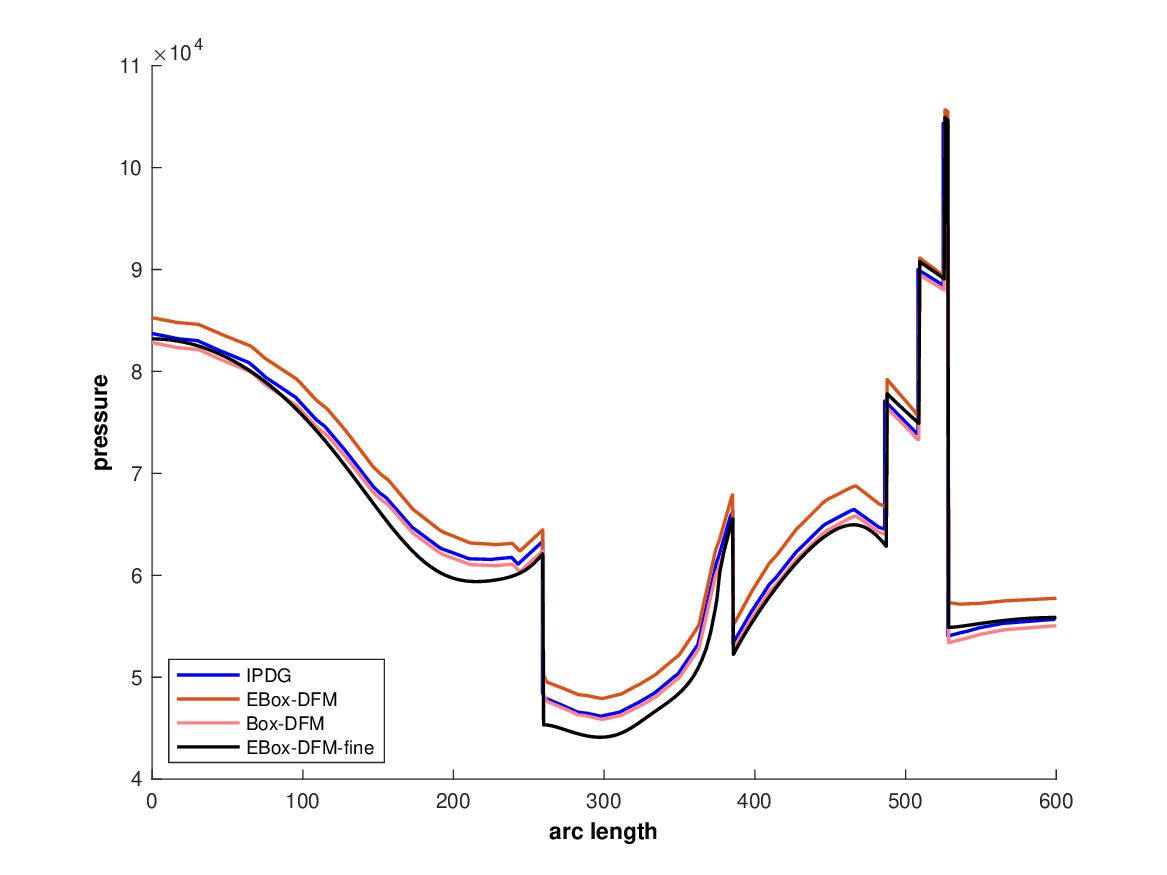}
  \caption{Slice of pressure - $2$}
 \end{subfigure}
 \caption{\textbf{Example \ref{ex:real}: realistic case.} 
 Results of the $P^1$-SIPG method for blocking barriers computed on the grid are shown in Figure \ref{fig:grid_real_P1}.
 The slices of pressure in (b) and (c) are taken along the line (0,0) - (700,600) and $x = 625$, respectively.
 The numerical methods that participate in the comparison are EBox-DFM \cite{glaser2022comparison} and Box-DFM \cite{xu2024extension}.
 The results of EBox-DFM-fine are obtained from the EBox-DFM on a fine grid containing $114, 721$ cells, while all other results are obtained on the same grid in Figure \ref{fig:grid_real_P1}.}
 \label{fig:real_barriers_P1}
\end{figure}

\end{exmp}

\section{Extension to two-phase flows}\label{Sect:two-phase}
The scheme established in Section \ref{Sect:scheme} can be directly extended to two-phase flows. 
In this section, we recall the equations for incompressible two-phase flows and apply the IPDG method for the interface model of two-phase flows in fractured porous media. 
A numerical test is presented last to demonstrate the validity of our algorithm.

\subsection{Interface model}
By neglecting the capillary pressure and gravitational effects, the governing equations of incompressible two-phase flows in porous media, known as the Buckley–Leverett equation, are given as \cite{karimi2001numerical},
\begin{equation*}
{-\nabla\cdot\left((\frac{k_{rn}}{\mu_n}+\frac{k_{rw}}{\mu_w})\mathbf{K}\nabla p\right)=q_n+q_w,}\quad (x,y)\in\Omega,
\end{equation*}
and 
\begin{equation*}
\frac{\partial(\phi s_w)}{\partial t}-\nabla\cdot(\frac{k_{rw}}{\mu_w}\mathbf{K}\nabla p)=q_w,\quad (x,y)\in\Omega,
\end{equation*}
where $k_{rn}=k_{rn}(s_w)$ and $k_{rw}=k_{rw}(s_w)$ are the relative permeabilities of the non-wetting and wetting-phase fluids, respectively, $\mu_n$ and $\mu_{w}$ are the viscosities of the non-wetting and wetting-phase fluids, respectively, $q_n$ and $q_w$ are source terms for the non-wetting and wetting-phase fluids, respectively, $s_w$ is the saturation of the wetting phase, $\phi$ is porosity of the rocks, and $p$ is the pressure of the fluid.

In the presence of fractures (refer to Figure \ref{fig:hybridDomain}), we consider the following interface problem for modeling the two-phase flows in fractured media,
\begin{equation}\label{eq:PDEmodel_pressure}
\begin{cases}
{-\nabla\cdot\left((\frac{k_{rn}}{\mu_n}+\frac{k_{rw}}{\mu_w})\mathbf{K}_m\nabla p\right)=q_n+q_w,} &(x,y)\in\Omega\setminus\left(\gamma_1\cup \gamma_2\right)\\
{\mathbf{u}^{-}\cdot\mathbf{n}_1^-+\mathbf{u}^{+}\cdot\mathbf{n}_1^+=\mathbf{0}}, & (x,y)\in\gamma_1\\
{\mathbf{u}^{-}\cdot\mathbf{n}_1^-=-(\frac{k_{rn}}{\mu_n}+\frac{k_{rw}}{\mu_w})k_b\frac{p^+-p^-}{a}}, & (x,y)\in\gamma_1\\
{p^+-p^-=0}, & (x,y) \in \gamma_2\\
{-\frac{\partial}{\partial \boldsymbol{\nu}_2}\left(a(\frac{k_{rn}}{\mu_{n}}+\frac{k_{rw}}{\mu_w})k_f\frac{\partial p^+}{\partial \boldsymbol{\nu}_2}\right)={  q_{f,n}+q_{f,w}+}\mathbf{u}^{-}\cdot\mathbf{n}_2^-+\mathbf{u}^{+}\cdot\mathbf{n}_2^+}, & (x,y) \in \gamma_2
\end{cases}
\end{equation}
and 
\begin{equation}\label{eq:PDEmodel_saturation}
\begin{cases}
\frac{\partial(\phi^m s_w)}{\partial t}-\nabla\cdot(\frac{k_{rw}}{\mu_w}\mathbf{K}_m\nabla p)=q_w, &(x,y)\in\Omega\setminus\left(\gamma_1\cup \gamma_2\right)\\
{\mathbf{u}_w^{-}\cdot\mathbf{n}_1^-+\mathbf{u}_w^{+}\cdot\mathbf{n}_1^+=\mathbf{0}}, & (x,y)\in\gamma_1\\
{\mathbf{u}_w^{-}\cdot\mathbf{n}_1^-=-\frac{k_{rw}}{\mu_w}k_b\frac{p^+-p^-}{a}}, & (x,y)\in\gamma_1\\
{s_w^+-s_w^-=0}, & (x,y) \in \gamma_2\\
\frac{\partial (a\phi^f s^+_w)}{\partial t}-\frac{\partial}{\partial \boldsymbol{\nu}_2}\left(a\frac{k_{rw}}{\mu_w}k_f\frac{\partial p^+}{\partial \boldsymbol{\nu}_2}\right)={ q_{f,w}+}\mathbf{u}_{w}^{-}\cdot\mathbf{n}^-_2+\mathbf{u}_{w}^{+}\cdot\mathbf{n}^+_2, & (x,y) \in \gamma_2
\end{cases}
\end{equation}
where $\mathbf{u}_{\alpha}^{\pm}:=-\frac{k_{r\alpha}}{\mu_\alpha}\mathbf{K}_{m}^{\pm}\nabla p^{\pm}$, $\alpha=n,w$ are Darcy's velocities of the fluid of the phase $\alpha$ evaluated from either side of $\gamma_i,i=1,2$, and $\mathbf{u}^{\pm}:=\mathbf{u}_{n}^{\pm}+\mathbf{u}^{\pm}_w$ is the total velocity of the two phases.
The other notations have the same meaning as those used in Section \ref{Sect:model}.
The governing equations \eqref{eq:PDEmodel_pressure}, \eqref{eq:PDEmodel_saturation} are subject to the boundary conditions
\begin{equation}\label{eq:BdrCond_pressure}
p=g_D ~\text{on}~ \Gamma_D, \quad \left((\frac{k_{rn}}{\mu_n}+\frac{k_{rw}}{\mu_w})\mathbf{K}_m\nabla p\right)\cdot\mathbf{n}= g_N ~\text{on}~\Gamma_N:=\partial\Omega\setminus\Gamma_D,
\end{equation}
and 
\begin{equation}\label{eq:BdrCond_saturation}
s_w=s_{D,w} ~\text{on}~ \Gamma_{D,\text{in}}, \quad 
\left(\frac{k_{rw}}{\mu_w}\mathbf{K}_m\nabla p\right)\cdot\mathbf{n}=g_{N,w} ~\text{on}~\Gamma_{N,\text{in}},
\end{equation}
and the initial condition 
\begin{equation}\label{eq:IniCond_saturation}
s_{w}(x,y,0)=s_{w0}(x,y),\quad (x,y)\in\Omega,
\end{equation}
where $\Gamma_{D,\text{in}}:=\{(x,y)\in\Gamma_D: (\mathbf{K}_m\nabla p)\cdot\mathbf{n}>0\}$ and $\Gamma_{N,\text{in}}:=\{(x,y)\in\Gamma_N: g_N>0\}$ are the inflow Dirichlet and Neumann boundaries, respectively.

\begin{rem}
We assume the continuity of saturation across conductive fractures in the model \eqref{eq:PDEmodel_saturation}. 
This assumption was adopted in \cite{kim2000finite, karimi2001numerical}. In the literature \cite{monteagudo2004control, reichenberger2006mixed}, the authors argued that models accounting for discontinuities of saturation are more accurate when the capillary pressure functions of fractures and the bulk matrix differ. 
However, since the capillary pressure is neglected in the Buckley–Leverett equations, we assume continuity of saturation in the model for simplicity.
\end{rem}

\subsection{Numerical scheme}
We adopt the same notations as those established in Section \ref{Sect:scheme}, unless otherwise stated.
The IPDG scheme for the interface model \eqref{eq:PDEmodel_pressure} - \eqref{eq:IniCond_saturation} is to find $p_h, s_{wh} \in V_{h,k}^{DG}$, such that
\begin{equation}\label{eq:scheme_pressure}
\tilde{a}_h(p_h, s_{wh}, \xi)+\tilde{b}_h(p_h, s_{wh}, \xi)=\tilde{F}_h(s_{wh}, \xi) +\tilde{G}_h(s_{wh}, \xi) \quad \forall\xi\in V_{h,k}^{DG},
\end{equation}
and
\begin{equation}\label{eq:scheme_saturation}
\pi_{h}((s_{wh})_t,\xi)+c_h(p_h, s_{wh}, \xi)+d_h(p_h, s_{wh}, \xi)=H_h(s_{wh}, \xi) + I_h(s_{wh}, \xi) \quad \forall\xi\in V_{h,k}^{DG},
\end{equation}
where
\begin{equation}\label{eq:a_tilde}
\begin{split}
    \tilde{a}_h(p_h, s_{wh}, \xi)=&((\frac{k_{rn}}{\mu_n}+\frac{k_{rw}}{\mu_w})\mathbf{K}_m\nabla p_h,\nabla \xi)_{\mathcal{T}_h}\\
    &-\la \{(\frac{k_{rn}}{\mu_n}+\frac{k_{rw}}{\mu_w})\mathbf{K}_m\nabla p_h\},\jl\xi\jr \ra_{\mathcal{E}^{0}\cup \mathcal{E}^D\cup \mathcal{E}^{\gamma_2}}\\
    &+\sigma \la 
    \jl p_h\jr , \{(\frac{k_{rn}}{\mu_n}+\frac{k_{rw}}{\mu_w})\mathbf{K}_m\nabla\xi\} \ra_{\mathcal{E}^0\cup \mathcal{E}^D\cup \mathcal{E}^{\gamma_2}}\\
    &+\la \alpha \{\frac{k_{rn}}{\mu_n}+\frac{k_{rw}}{\mu_w}\}\jl p_h\jr, \jl \xi\jr \ra_{\mathcal{E}^0\cup \mathcal{E}^D\cup \mathcal{E}^{\gamma_2}}+\la\{\frac{k_{rn}}{\mu_n}+\frac{k_{rw}}{\mu_w}\}\frac{k_b}{a}\jl p_h\jr,\jl\xi\jr \ra_{\mathcal{E}^{\gamma_1}},
\end{split}
\end{equation}
\begin{equation}\label{eq:b_tilde}
\begin{split}
    \tilde{b}_h(p_h, s_{wh}, \xi)=&\frac 12  \la a(\frac{k_{rn}^-}{\mu_n}+\frac{k_{rw}^-}{\mu_w})k_f \frac{\partial p_h^-}{\partial \boldsymbol{\nu}_2}, \frac{\partial \xi^-}{\partial \boldsymbol{\nu}_2}\ra_{\mathcal{E}^{\gamma_2}}+\frac 12 \la  a(\frac{k_{rn}^+}{\mu_n}+\frac{k_{rw}^+}{\mu_w})k_f \frac{\partial p_h^+}{\partial \boldsymbol{\nu}_2}, \frac{\partial \xi^+}{\partial \boldsymbol{\nu}_2}\ra_{\mathcal{E}^{\gamma_2}}\\
    &-\frac 12[\{a(\frac{k_{rn}}{\mu_n}+\frac{k_{rw}}{\mu_w})k_f\frac{\partial p_h}{\partial \boldsymbol{\nu}_2}\}_{P_{-}^{\star}}, \jl \xi \jr_{P_{-}^\star}]_{\mathcal{V}^\circ \cup \mathcal{V}^D} -\frac 12[\{a(\frac{k_{rn}}{\mu_n}+\frac{k_{rw}}{\mu_w})k_f\frac{\partial p_h}{\partial \boldsymbol{\nu}_2}\}_{P_{+}^{\star}}, \jl \xi \jr_{P_{+}^\star}]_{\mathcal{V}^\circ \cup \mathcal{V}^D} \\
    &+\frac{\sigma}{2}[\{a(\frac{k_{rn}}{\mu_n}+\frac{k_{rw}}{\mu_w})k_f\frac{\partial \xi }{\partial \boldsymbol{\nu}_2}\}_{P_{-}^{\star}}, \jl p_h \jr_{P_{-}^\star}]_{\mathcal{V}^\circ \cup \mathcal{V}^D} +\frac\sigma2[\{a(\frac{k_{rn}}{\mu_n}+\frac{k_{rw}}{\mu_w})k_f\frac{\partial \xi}{\partial \boldsymbol{\nu}_2}\}_{P_{+}^{\star}}, \jl p_h \jr_{P_{+}^\star}]_{\mathcal{V}^\circ \cup \mathcal{V}^D} \\
    &+ [\tilde{\alpha}\{\frac{k_{rn}}{\mu_n}+\frac{k_{rw}}{\mu_w}\}_{P^{\star}_{-}}\jl p_h \jr_{P_{-}^\star},\jl \xi \jr_{P_{-}^\star}]_{\mathcal{V}^\circ \cup \mathcal{V}^D} +[\tilde{\alpha}\{\frac{k_{rn}}{\mu_n}+\frac{k_{rw}}{\mu_w}\}_{P_{+}^{\star}}\jl p_h \jr_{P_{+}^\star},\jl \xi \jr_{P_{+}^\star}]_{\mathcal{V}^\circ \cup \mathcal{V}^D},
\end{split}
\end{equation}

\begin{equation}\label{eq:F_tilde}
\tilde{F}_h(s_{wh}, \xi)=(q_n,\xi)_{\mathcal{T}_h}+(q_w,\xi)_{\mathcal{T}_h}+\la g_N, \xi \ra_{\mathcal{E}^N} +\sigma \la g_D, (\frac{k_{rn}}{\mu_n}+\frac{k_{rw}}{\mu_w})\mathbf{K}_m\nabla\xi\cdot\mathbf{n}\ra_{\mathcal{E}^D} +\la \alpha(\frac{k_{rn}}{\mu_n}+\frac{k_{rw}}{\mu_w}) g_D,\xi \ra_{\mathcal{E}^D},
\end{equation}
\begin{equation}\label{eq:G_tilde}
\begin{split}
\tilde{G}_h(s_{wh}, \xi)=&\frac{\sigma}{2}[ak_f(\{(\frac{k_{rn}}{\mu_n}+\frac{k_{rw}}{\mu_w})\frac{\partial \xi}{\partial \boldsymbol{\nu}_2}\}_{P_{-}^\star}+\{(\frac{k_{rn}}{\mu_n}+\frac{k_{rw}}{\mu_w})\frac{\partial \xi}{\partial \boldsymbol{\nu}_2}\}_{P_{+}^\star}),\text{sign}(\boldsymbol{\nu}_2\cdot \mathbf{n})g_D]_{\mathcal{V}^D}\\
&+[\tilde{\alpha}((\frac{k_{rn}}{\mu_n}+\frac{k_{rw}}{\mu_w})\xi_{P_{-}^\star}+(\frac{k_{rn}}{\mu_n}+\frac{k_{rw}}{\mu_w})\xi_{P_{+}^\star}),g_D]_{\mathcal{V}^D}{ + \la q_{f,n}, \{\xi\} \ra_{\mathcal{E}^{\gamma_2}} + \la q_{f,w}, \{\xi\} \ra_{\mathcal{E}^{\gamma_2}}},    
\end{split}
\end{equation}
% \begin{rem}
% A bound-preserving technique similar to those presented in [XX, XX] is applied to the scheme, ensuring that the physical bounds of $s_w\in[0,1]$ are maintained during simulation. As the focus of this paper is not on bound preservation and in order to save space, we do not present the details.
% \end{rem}
\begin{equation}\label{eq:pi}
\pi_h((s_{wh})_t,\xi) = (\phi^m (s_{wh})_t, \xi)_{\mathcal{T}_h} + \frac 12 \la a \phi^f (s_{wh}^-)_t, \xi^-\ra_{\mathcal{E}^{\gamma_2}} + \frac 12 \la a\phi^f (s_{wh}^{+})_t, \xi^{+}\ra_{\mathcal{E}^{\gamma_2}},
\end{equation}
\begin{equation}\label{eq:c}
\begin{split}
    c_h(p_h, s_{wh}, \xi)=&(\frac{k_{rw}}{\mu_w}\mathbf{K}_m\nabla p_h,\nabla \xi)_{\mathcal{T}_h}-\la \{\frac{k_{rw}}{\mu_w}\mathbf{K}_m\nabla p_h\},\jl\xi\jr \ra_{\mathcal{E}^{0}\cup \mathcal{E}^D\cup \mathcal{E}^{\gamma_2}}\\
&+\sigma \la 
    \jl p_h\jr , \{\frac{k_{rw}}{\mu_w}\mathbf{K}_m\nabla\xi\} \ra_{\mathcal{E}^0\cup \mathcal{E}^D\cup \mathcal{E}^{\gamma_2}} + \la \alpha\{\frac{k_{rw}}{\mu_w}\}\jl p_h\jr, \jl \xi\jr \ra_{\mathcal{E}^0\cup \mathcal{E}^D\cup \mathcal{E}^{\gamma_2}}\\
    &+\la\{\frac{k_{rw}}{\mu_w}\}\frac{k_b}{a}\jl p_h\jr,\jl\xi\jr \ra_{\mathcal{E}^{\gamma_1}}+\la \beta\jl s_{wh}\jr, \jl \xi\jr \ra_{\mathcal{E}^0\cup \mathcal{E}^{\gamma_1}\cup \mathcal{E}^{\gamma_2}\cup\mathcal{E}^{D,\text{in}}}
\end{split}
\end{equation}
\begin{equation}\label{eq:d}
\begin{split}
    d_h(p_h,s_{wh},\xi)=&\frac 12 \la a\frac{k_{rw}^-}{\mu_w}k_f \frac{\partial p_h^-}{\partial \boldsymbol{\nu}_2}, \frac{\partial \xi^-}{\partial \boldsymbol{\nu}_2}\ra_{\mathcal{E}^{\gamma_2}}+\frac 12 \la  a\frac{k_{rw}^+}{\mu_w}k_f \frac{\partial p_h^+}{\partial \boldsymbol{\nu}_2}, \frac{\partial \xi^+}{\partial \boldsymbol{\nu}_2}\ra_{\mathcal{E}^{\gamma_2}}\\
    &-\frac 12[\{a\frac{k_{rw}}{\mu_w}k_f\frac{\partial p_h}{\partial \boldsymbol{\nu}_2}\}_{P_{-}^{\star}}, \jl \xi \jr_{P_{-}^\star}]_{\mathcal{V}^\circ \cup \mathcal{V}^D} - \frac 12[\{a\frac{k_{rw}}{\mu_w}k_f\frac{\partial p_h}{\partial \boldsymbol{\nu}_2}\}_{P_{+}^{\star}}, \jl \xi \jr_{P_{+}^\star}]_{\mathcal{V}^\circ \cup \mathcal{V}^D} \\
    &{\color{black} +\frac \sigma 2[\{a\frac{k_{rw}}{\mu_w}k_f\frac{\partial \xi}{\partial \boldsymbol{\nu}_2}\}_{P_{-}^{\star}}, \jl p_h \jr_{P_{-}^\star}]_{\mathcal{V}^\circ \cup \mathcal{V}^D} +\frac \sigma 2[\{a\frac{k_{rw}}{\mu_w}k_f\frac{\partial \xi}{\partial \boldsymbol{\nu}_2}\}_{P_{+}^{\star}}, \jl p_h \jr_{P_{+}^\star}]_{\mathcal{V}^\circ \cup \mathcal{V}^D} }\\
    &+ [\tilde{\alpha} \{\frac{k_{rw}}{\mu_w}\}_{P^\star_{-}}\jl p_h \jr_{P_{-}^\star},\jl \xi \jr_{P_{-}^\star}]_{\mathcal{V}^\circ \cup \mathcal{V}^D} + [\tilde{\alpha}\{\frac{k_{rw}}{\mu_w}\}_{P^{\star}_{+}}\jl p_h \jr_{P_{+}^\star},\jl \xi \jr_{P_{+}^\star}]_{\mathcal{V}^\circ \cup \mathcal{V}^D}\\
    &+{\color{black}[\tilde{\beta}\jl s_{wh} \jr_{P_{-}^\star},\jl \xi \jr_{P_{-}^\star}]_{\mathcal{V}^\circ \cup \mathcal{V}^{D,\rm{in}}} + [\tilde{\beta}\jl s_{wh} \jr_{P_{+}^\star},\jl \xi \jr_{P_{+}^\star}]_{\mathcal{V}^\circ\cup \mathcal{V}^{D,\rm{in}}}}
\end{split}
\end{equation}
\begin{equation}\label{eq:H}
\begin{split}
H_h(s_{wh},\xi)=&(q_w,\xi)_{\mathcal{T}_h}
+\la g_{N,w}, \xi \ra_{\mathcal{E}^{N,\text{in}}} 
+\la \frac{k_{rw}}{\mu_w}(\frac{k_{rn}}{\mu_n}+\frac{k_{rw}}{\mu_w})^{-1}g_{N}, \xi \ra_{\mathcal{E}^{N,\text{out}}} \\
& + \sigma \la 
     g_D , \frac{k_{rw}}{\mu_w}\mathbf{K}_m\nabla\xi\cdot\mathbf{n} \ra_{ \mathcal{E}^D} + \la \alpha\frac{k_{rw}}{\mu_w} g_D,\xi \ra_{\mathcal{E}^{D}}+\la\beta s_{D,w},\xi \ra_{\mathcal{E}^{D,\text{in}}},
\end{split}
\end{equation}
and 
\begin{equation}\label{eq:I}
\begin{split}
  I_h(s_{wh}, \xi)=&{\color{black} \frac \sigma 2[\{a\frac{k_{rw}}{\mu_w}k_f\frac{\partial \xi}{\partial \boldsymbol{\nu}_2}\}_{P_{-}^{\star}}+\{a\frac{k_{rw}}{\mu_w}k_f\frac{\partial \xi}{\partial \boldsymbol{\nu}_2}\}_{P_{+}^{\star}}, \text{sign}(\boldsymbol{\nu}_2\cdot \mathbf{n})g_D]_{\mathcal{V}^D}}\\
  &+[\tilde{\alpha}(\frac{k_{rw}}{\mu_w}\xi_{P_{-}^\star}+\frac{k_{rw}}{\mu_w}\xi_{P_{+}^\star}),g_D]_{\mathcal{V}^{D}}{+[\tilde{\beta}s_{D,w}, (\xi_{P_{-}^\star}+\xi_{P_{+}^\star})]_{ \mathcal{V}^{D,\rm{in}}}}{ + \la q_{f,w}, \{\xi\} \ra_{\mathcal{E}^{\gamma_2}}},
\end{split}
\end{equation}
where we take $k_{r\alpha}=k_{r\alpha}(s_{D,w})$ and $k_{r\alpha}=k_{r\alpha}(s_{wh})$ on $\Gamma_{D,\text{in}}$ and $\Gamma_{D,\text{out}}:=\Gamma_D\setminus\Gamma_{D,\text{in}}$, respectively, for $\alpha=n,w$, and $\beta=\beta_0\|\mathbf{K}_m\nabla p\|_{L^\infty(\Omega)}$ and $\tilde{\beta}=\tilde{\beta}_0\|ak_f\frac{\partial p^\pm}{\partial\boldsymbol{\nu}_2} \|_{L^\infty(\gamma_2)}$ are penalty parameters for some positive numbers $\beta_0,\tilde{\beta}_0$.

We adopt the third-order strong stability preserving Runge-Kutta (SSP-RK3) method \cite{shu1988efficient} to evolve the system over time, with each stage solving the pressure equation \eqref{eq:scheme_pressure} implicitly and updating the saturation equation \eqref{eq:scheme_saturation} explicitly (implicit-pressure explicit-saturation method). The total variation bounded (TVB) \cite{cockburn1998runge} and bound-preserving \cite{zhang2010maximum, guo2021conservative} limiters are applied to control oscillations and preserve the physical bounds of the saturation.

\subsection{A numerical experiment}
\begin{exmp}\label{ex:two-phase}
\textbf{two-phase flows in the complex fracture network}

We present a numerical experiment to demonstrate the validity of the schemes \eqref{eq:scheme_pressure} - \eqref{eq:scheme_saturation}. 
For simplicity, we consider the same setup and grid as used in Example \ref{ex:complex} in Section \ref{Sect:tests}, where conductive fractures and blocking barriers coexist and intersect. 
Moreover, we set $\phi^m=0.2$, $\phi^f=1$, $\mu_n=\mu_w=1$, $k_{rn}(s_w)=1-s_{w}$, $k_{rw}(s_w)=s_w$, $s_{w0}=0$ and $s_{D,w}=1$ in the problem.

{We take the penalty parameters $\alpha_0=\tilde{\alpha}_0=10, \beta_0=\tilde{\beta}_0=2$ in both cases.}
The saturation in the predominantly vertical flow (as in the case (a) of Example \ref{ex:complex}) at times $T_1=0.01, T_2=0.02, T_3=0.03$, and $T_4=0.04$ is presented in Figure \ref{fig:C_complex_T2B}, where the blocking effect of barriers is not significant as the flow is almost parallel to the barriers.
The saturation in the predominantly horizontal flow (as in the case (b) of Example \ref{ex:complex}) at times $T_1=0.015, T_2=0.03, T_3=0.045$, and $T_4=0.06$ is presented in Figure \ref{fig:C_complex_L2R}. From this figure, we can clearly see the effects of conductive fractures and blocking barriers on the flow. 
\begin{figure}[!htbp]
 \centering
 \begin{subfigure}[b]{0.45\textwidth}
  \includegraphics[width=\textwidth]{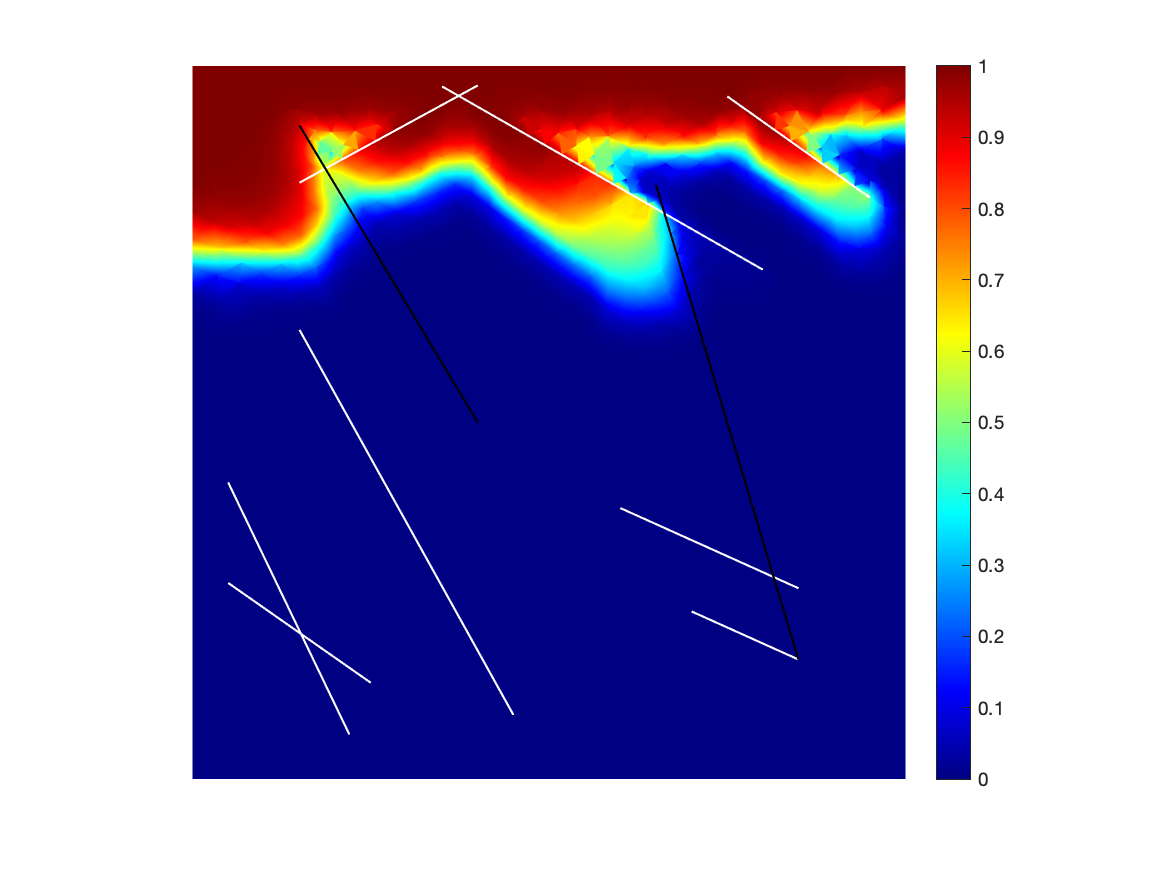}
  \caption{$T_1=0.01$}
 \end{subfigure}
 \begin{subfigure}[b]{0.45\textwidth}
  \includegraphics[width=\textwidth]{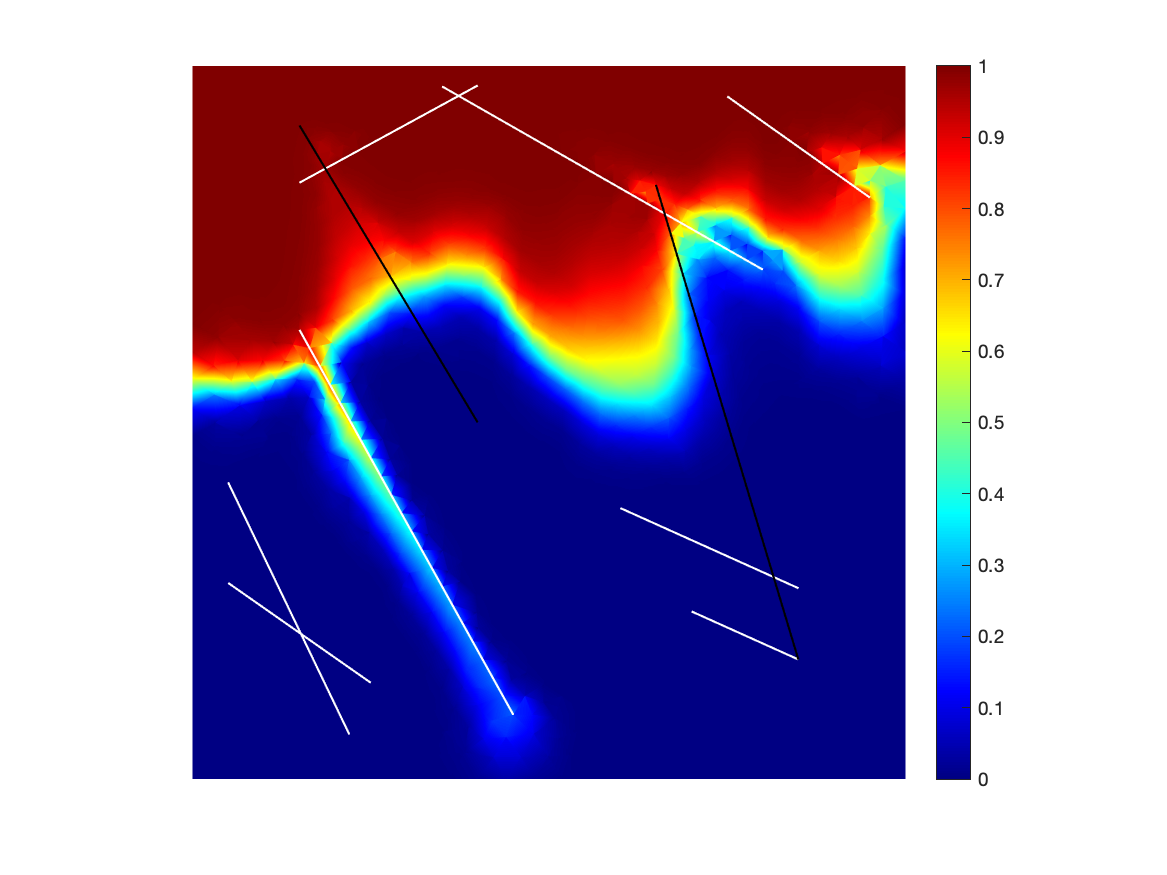}
  \caption{$T_2=0.02$}
 \end{subfigure}\\
 \begin{subfigure}[b]{0.45\textwidth}
  \includegraphics[width=\textwidth]{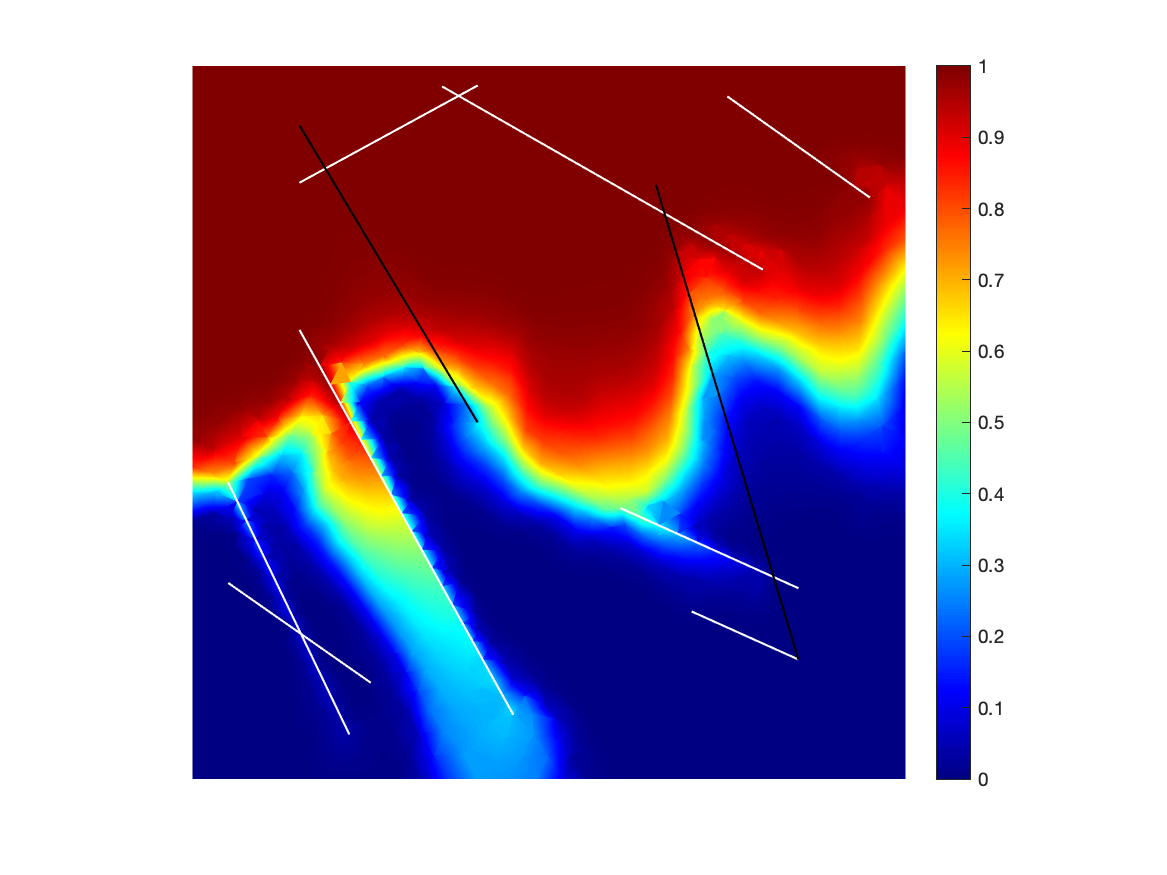}
  \caption{$T_3=0.03$}
 \end{subfigure}
 \begin{subfigure}[b]{0.45\textwidth}
  \includegraphics[width=\textwidth]{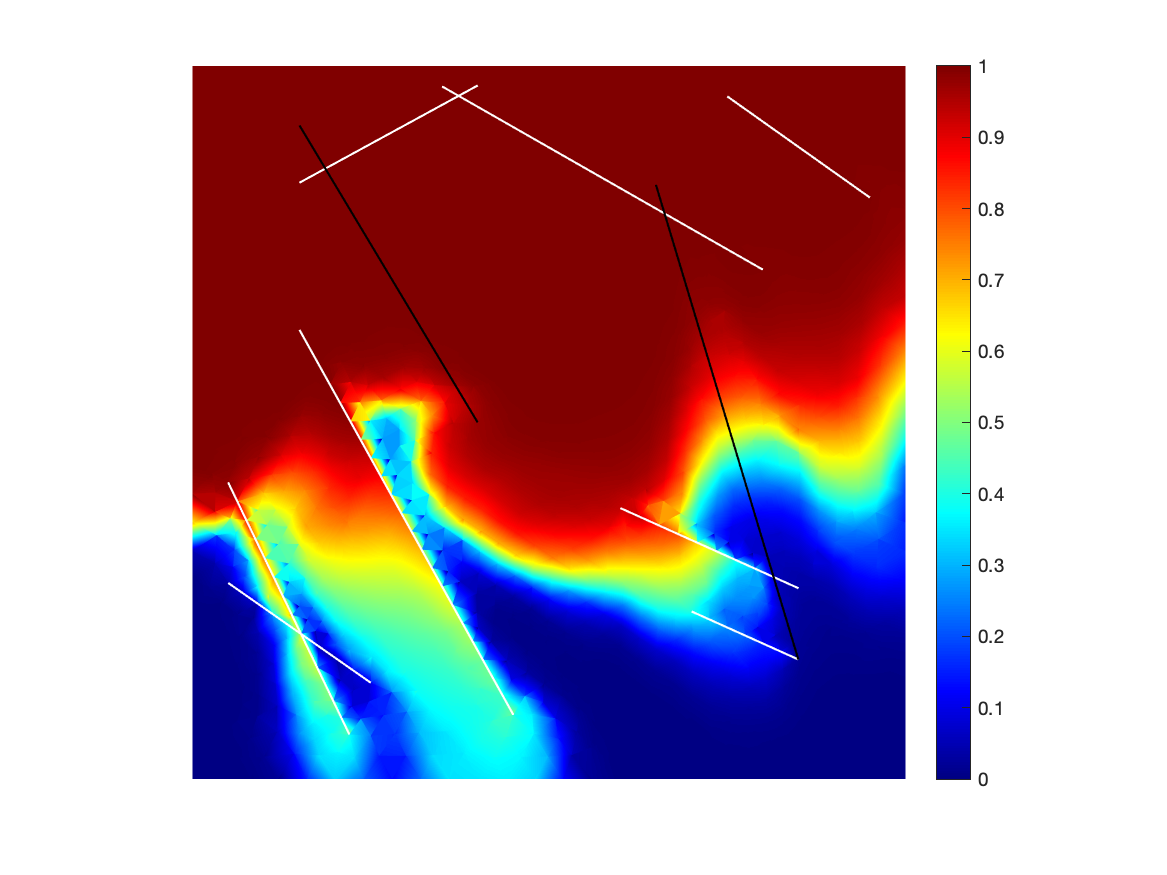}
  \caption{$T_4=0.04$}
 \end{subfigure}\\ 
\caption{
\textbf{Example \ref{ex:two-phase}: two-phase flow in complex fracture network.}
Saturation at different times, computed using the $P^1$-SIPG method, for the predominantly vertical two-phase flow.}
 \label{fig:C_complex_T2B}
\end{figure}
\begin{figure}[!htbp]
 \centering
 \begin{subfigure}[b]{0.45\textwidth}
  \includegraphics[width=\textwidth]{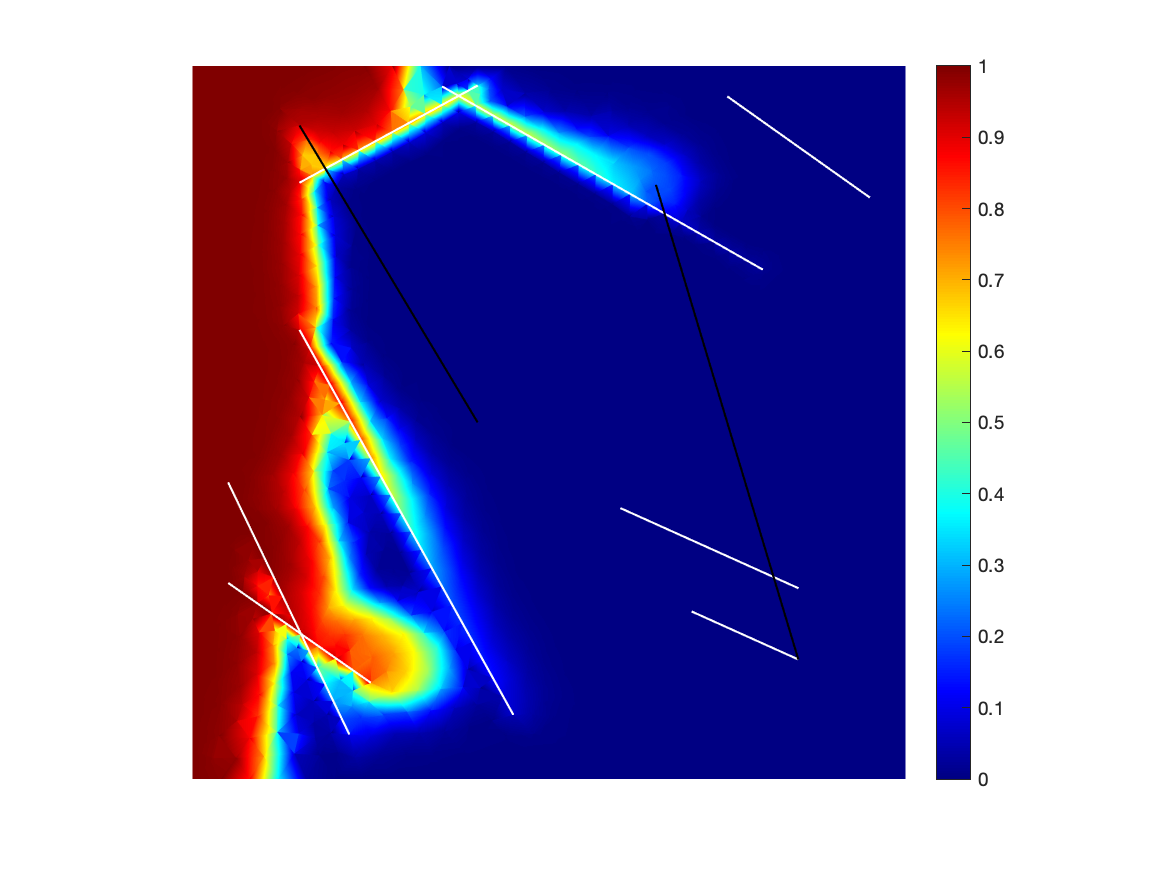}
  \caption{$T_1=0.015$}
 \end{subfigure}
 \begin{subfigure}[b]{0.45\textwidth}
  \includegraphics[width=\textwidth]{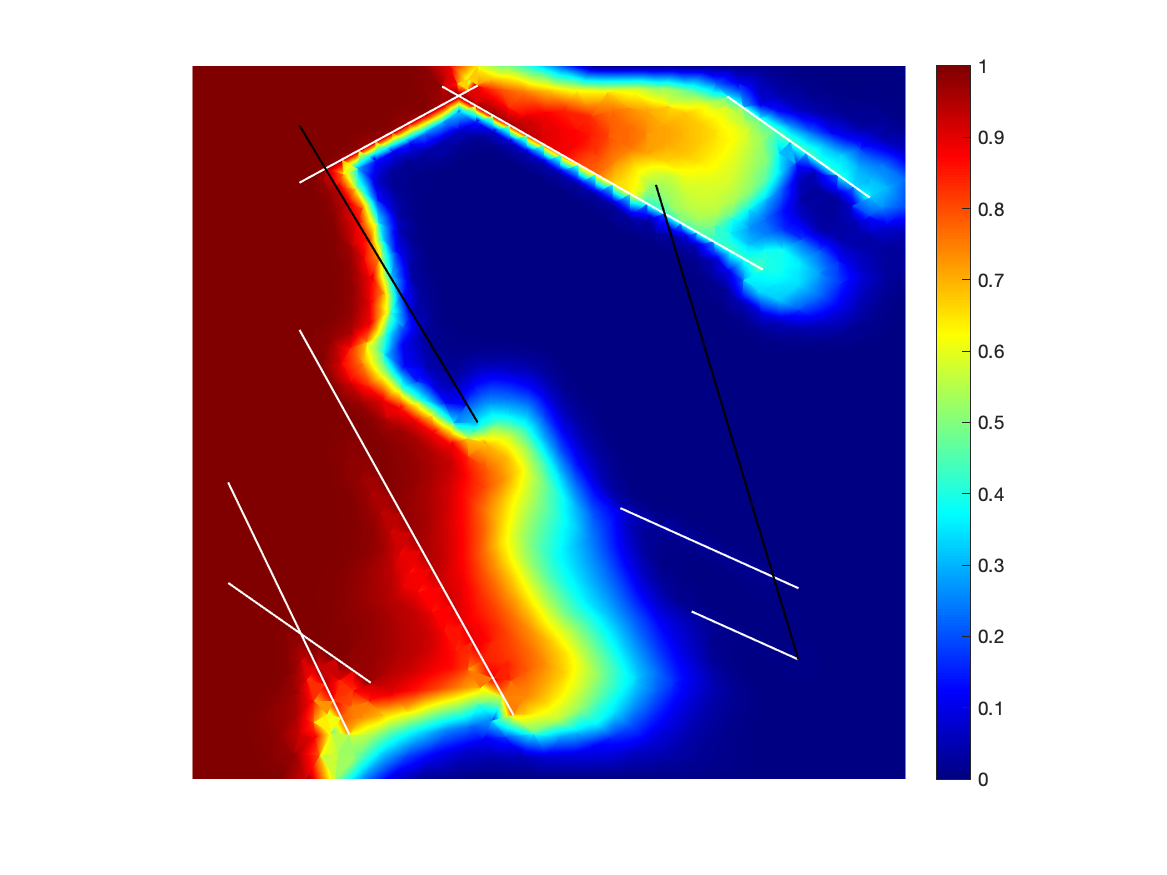}
  \caption{$T_2=0.03$}
 \end{subfigure}\\
 \begin{subfigure}[b]{0.45\textwidth}
  \includegraphics[width=\textwidth]{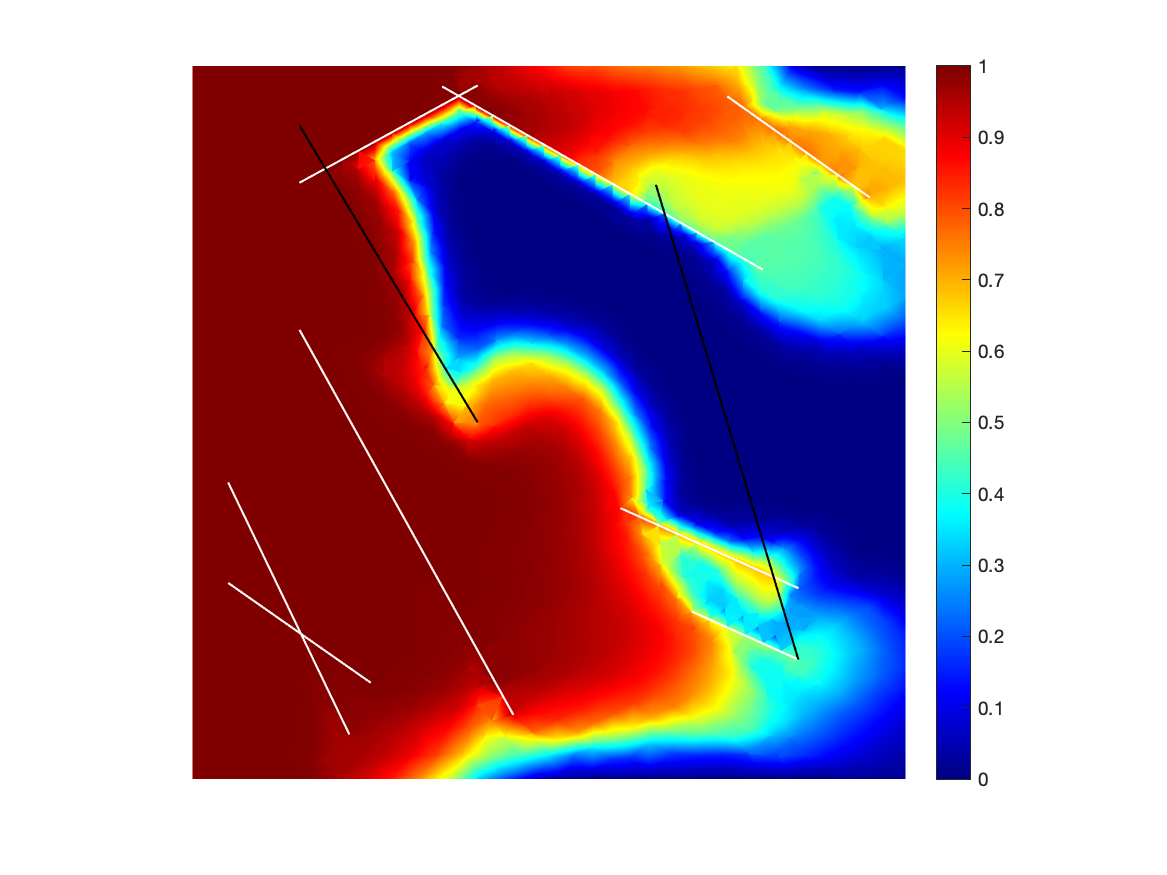}
  \caption{$T_3=0.045$}
 \end{subfigure}
 \begin{subfigure}[b]{0.45\textwidth}
  \includegraphics[width=\textwidth]{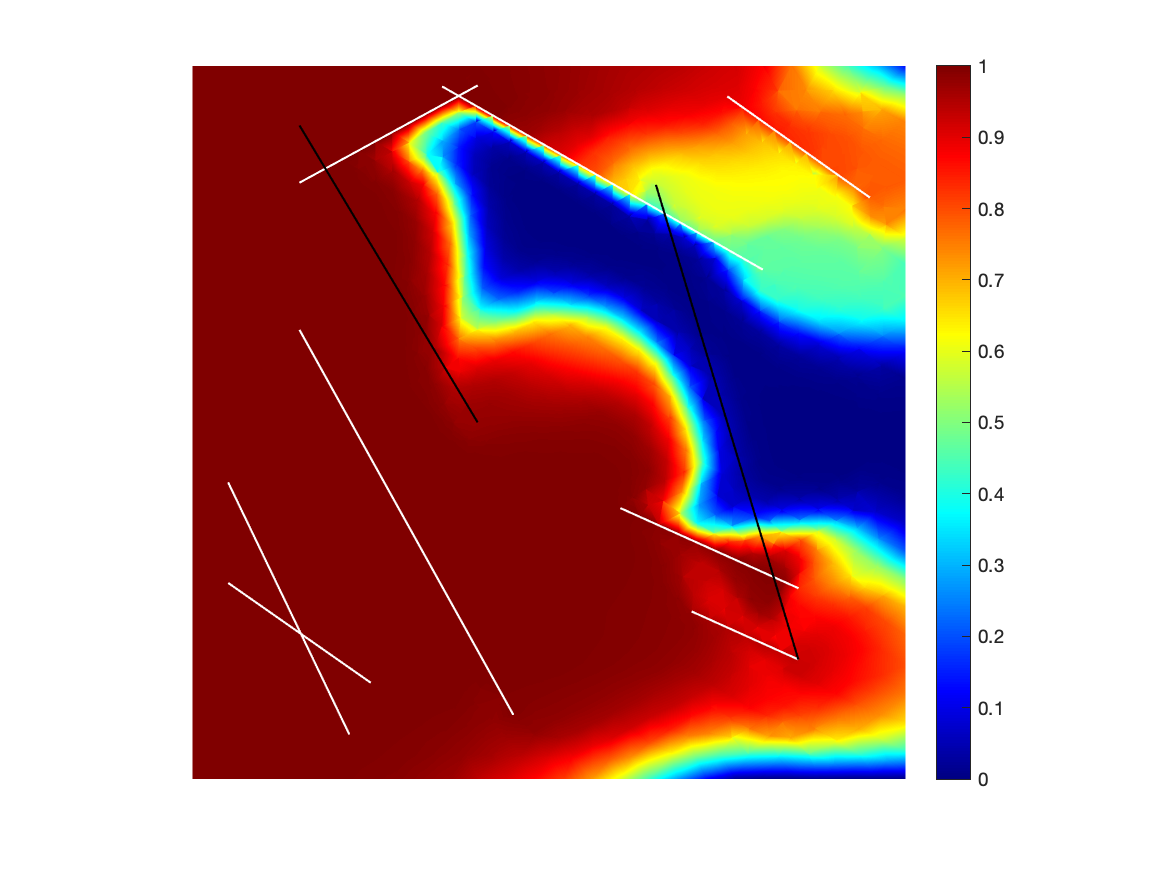}
  \caption{$T_4=0.06$}
 \end{subfigure}\\ 
 \caption{
 \textbf{Example \ref{ex:two-phase}: two-phase flow in complex fracture network.}
 Saturation at different times, computed using the $P^1$-SIPG method, for the predominantly horizontal two-phase flow.}
 \label{fig:C_complex_L2R}
\end{figure}
\end{exmp}

\section{Summary}\label{Sect:summary}
In this paper, we propose an interior penalty discontinuous Galerkin method for the simulation of fluid flow in fractured porous media.
An interface model accounting for both conductive and blocking fractures is discretized on fitted meshes without introducing additional degrees of freedom or equations on interfaces, thereby reducing computational costs. 
We provide stability analysis and error estimates for the scheme and derive optimal a priori error bounds in terms of mesh size $h$ and sub-optimal bounds regarding polynomial degree $k$, assessed in both the energy norm and $L^2$ norm.
Numerical experiments involving published benchmarks are conducted to validate the theoretical analysis and demonstrate the small model error of our method. 
Moreover, we extend our approach to two-phase flow and use numerical tests to demonstrate the validity of our algorithm.

\section*{Acknowledgements}
The authors are grateful to Dr. Dennis Gläser for sharing the data used in Section \ref{Sect:tests} for comparison.

\end{document}